
\documentclass[11pt]{amsart}

\usepackage{amssymb}
\usepackage{latexsym}
\usepackage{amsmath}
\usepackage{euscript}

      \def\dC{{\mathbb C}}

   \def\dN{{\mathbb N}}   
      \def\dR{{\mathbb R}}


\newcommand\gt{{\mathfrak{t}}}

\def\bm\chi{\mbox{\boldmath$\chi$}}

\def\dom{{\rm dom\,}}

\def\rank{{\rm rank\,}}

\def\e{{\rm e}}
\def\I{{\rm i}}
\let\xker=\ker \def\ker{{\xker\,}}

\def\sgn{{\rm sgn\,}}

\newcommand{\mB}{B}
\newcommand{\mC}{C}
\newcommand{\mD}{D}

\newtheorem{theorem}{Theorem}[section]
\newtheorem{proposition}[theorem]{Proposition}
\newtheorem{corollary}[theorem]{Corollary}
\newtheorem{lemma}[theorem]{Lemma}

\theoremstyle{definition}
\newtheorem{example}[theorem]{Example}
\newtheorem{definition}[theorem]{Definition}
\newtheorem{remark}[theorem]{Remark}

\numberwithin{equation}{section}

\begin{document}
\title[The Riesz basis property of an indefinite Sturm-Liouville problem]
{The Riesz basis property of\\ an indefinite Sturm-Liouville problem\\
with non-separated boundary conditions}
\author[B.~\'Curgus]{Branko \'{C}urgus}
\address{Department of Mathematics\\
Western Washington University\\  Bellingham WA\\
98225, USA}
\email{curgus@gmail.com}

\author[A.~Fleige]{Andreas Fleige}
\address{Baroper Schulstrasse 27 A \\
44225 Dortmund \\
Deutschland}
\email{andreas-fleige@versanet.de}

\author[A.~Kostenko]{Aleksey Kostenko}
\address{Fakult\"at f\"ur Mathematik\\
Universit\"at Wien\\
Oskar-Morgenstern-Platz 1\\ 1090 Wien\\ Austria}
\email{duzer80@gmail.com}

\thanks{ Integr. Equat. Oper. Theory (2013), to appear}

\thanks{{\it The research was funded by the Austrian Science Fund (FWF): M1309}}

\subjclass{Primary 34B09; Secondary 34B24, 34L10, 47B50, 26D10}

\keywords{indefinite Sturm-Liouville problem, Riesz basis, regular critical point, HELP inequality}

\begin{abstract}
 We consider a regular indefinite Sturm-Liouville eigenvalue problem
\mbox{$-f'' + q f = \lambda r f$} on $[a,b]$
subject to general self-adjoint boundary conditions and
with a weight function $r$ which changes its sign at finitely many, so-called turning points.  We give sufficient and in some cases necessary and sufficient conditions for the Riesz basis property of this eigenvalue problem.
In the case of separated boundary conditions we extend the class of weight functions $r$ for which the Riesz basis property can be completely  characterized in terms of the local behavior of $r$ in a neighborhood of the turning points. We identify a class of  non-separated boundary conditions for which, in addition to the local behavior of $r$ in a neighborhood of the turning points, local conditions on $r$ near the boundary are needed for the Riesz basis property. As an application, it is shown that the Riesz basis property for the periodic boundary conditions is closely related to a regular HELP-type inequality without boundary conditions.
\end{abstract}

\maketitle


\section{Introduction}
Let $q$ and $r$ be real integrable functions on $[a,b]$, $-\infty < a < b < \infty$.  We assume that $r \neq 0$ a.e. on $[a,b]$ and $r$ changes sign at finitely many, say $n \geq 1$, points in $(a,b)$.  To be more precise, we assume that there exists a polynomial $p$ of degree $n$ whose roots are simple and lie in $(a,b)$ such that $rp > 0$ a.e. on $[a,b]$.  Then the roots of $p$ are the points where $r$ changes sign.  These points are also known as turning points of $r$.  We consider the regular indefinite Sturm-Liouville eigenvalue problem
\begin{equation}\label{eq:sp}
-f''+qf=\lambda r f\quad \text{on} \quad [a,b]
\end{equation}
subject to the self-adjoint boundary conditions
\begin{equation}\label{eq:gbc}
\mC \begin{pmatrix}
 f'(a)\\
 -f'(b)
\end{pmatrix}
 =
 \mD\begin{pmatrix}
 f(a)\\
 f(b)
\end{pmatrix}.
\end{equation}
Here $\mC, \mD\in \dC^{2\times 2}$ satisfy
\begin{equation}\label{CD}
\rank(\mC|\mD)=2,\qquad \mC\mD^*=\mD\mC^*.
\end{equation}
In particular, we study the following problems:
\begin{equation}\label{separated}
-f'' + q f = {\lambda} r f
\quad \mbox{on} \quad [a,b], \qquad
f(a) = f(b) = 0,
\end{equation}
i.e.,  $\mD=I_2$ and $ \mC=0$ in \eqref{eq:gbc};
\begin{equation}\label{nonseparated}
-f'' + q f = {\lambda} r f
\quad \mbox{on} \quad [a,b], \quad
\e^{\I t} f(a) = f(b), \quad f'(a) =  \e^{-\I t}  f'(b) + d \, f(a)
\end{equation}
i.e., $\mC=\begin{pmatrix}
 1 & \e^{-\I t} \\
 0 & 0
 \end{pmatrix}
 $ and
 $\mD=\left(\!\!\begin{array}{cc}
 d & 0\\
 -\e^{\I t} & 1
 \end{array}\!\!\right)$ with $t \in [0, 2\pi)$ and $d \in \dR$.

At the beginning of Section~\ref{reduction}, with eigenvalue problem  \eqref{eq:sp}, \eqref{eq:gbc} with \eqref{CD} we associate the operator $L_{C,D}$ in the Hilbert space $L^2_{|r|}[a,b]$.  The spectral properties of $L_{C,D}$ are identical to the spectral properties of the corresponding  eigenvalue problem. Since we assume that $r$ is indefinite, the operator   $L_{C,D}$ is not self-adjoint in the Hilbert space $L^2_{|r|}[a,b]$. However, the space $L^2_{|r|}[a,b]$ equipped with the indefinite inner product $[f,g] = \int_a^b f \overline{g}\,  r dx $, $f,g \in L^2_{|r|}[a,b]$,  becomes a Krein space. It follows from results in \cite{CL} that the operator $L_{C,D}$ is definitizable in this Krein space, its spectrum is discrete with at most finitely many non-semisimple eigenvalues and the linear span of the root functions (i.e. eigenfunctions and associated functions) of $L_{C,D}$ is dense in $L^2_{|r|}[a,b]$.

One of the main goals of this paper is to provide conditions on the coefficients $q$, $r$, $\mC$ and $\mD$ under which there is a Riesz basis of the Hilbert space $L_{|r|}^2[a,b]$ which consists of root functions of eigenvalue problem  \eqref{eq:sp}, \eqref{eq:gbc} subject to \eqref{CD}, that is of root functions of the operator $L_{C,D}$.
This will be referred to as the {\em Riesz
basis property} of problem \eqref{eq:sp}, \eqref{eq:gbc} in $L^2_{|r|}[a,b]$.

The Riesz basis property of problem \eqref{separated} (i.e. with Dirichlet boundary conditions) and with one turning point has been extensively investigated during the last three decades (see for example \cite{AP, B, BF, BF2, CN, F2, F3, F5, KKM_09, KM_08, K, K2, P, Pa2, sP2, V}). 
In \cite{B} Beals gave the first  general sufficient condition for the Riesz basis property. This condition  requires the weight function to behave like a power of the independent variable $x$ in a neighborhood of the turning point. Using Pyatkov's interpolation criterion \cite{sP}, Parfenov \cite{P} (see also \cite{Pa2}) discovered conditions on the local behavior of $r$ at the turning point which turned out to be necessary and sufficient for the Riesz basis property if the weight function $r$ is odd (see Theorem~\ref{th:parfenov} below).  Another proof of Parfenov's criterion was given recently in \cite{K3}. A generalization to certain classes of non-odd weights was obtained in,  e.g., \cite{F5}, \cite{BF}, \cite{BF2}. In the present paper this generalization will be improved again extending Parfenov's equivalence statement to odd-dominated weight functions $r$, see Theorem~\ref{lem:odddom}. Furthermore, we point out a  connection of Parfenov's conditions to the class of so-called positively increasing functions (see, e.g., \cite{BGT, bks, Rog} and also \cite{K2, K3}).

The main focus of this paper is on extending of the results from the previous paragraph to general boundary conditions.  The example in \cite{BC2} shows that in the case of antiperiodic boundary conditions imposing only a condition at the local behavior of $r$ at the turning point is not sufficient to guarantee the Riesz basis property of eigenvalue problem \eqref{eq:sp}.  Some results involving general self-adjoint boundary conditions and multiple turning points at which $r$ satisfies a Beals-type condition were obtained in \cite{CL}.  In \cite{sP2} Pyatkov obtained several results related to self-adjoint boundary conditions without any assumptions on the weight function at the turning points. For example, provided that the number of turning points is even (i.e., $r$ is of one sign in a neighborhoods of $a$ and $b$), it was shown that if the Riesz basis property holds for one set of self-adjoint boundary conditions, then it holds for all self-adjoint boundary conditions.  Further, for an arbitrary finite number of turning points, it was shown that if the Riesz basis property holds for one set of separated boundary conditions, then it holds for all separated boundary conditions.  In the case of non-separated boundary conditions and an odd number of turning points, in addition to local conditions at the turning points, some local conditions on $r$ near the boundary points are needed for the Riesz basis property to hold. Such sufficient conditions given in \cite{sP2} are more general then those in \cite{CL}.  The relevant results from \cite{sP2} are reviewed in Section~\ref{sec:pyatkov}.  We note that the methods of \cite{CL} were used in \cite{BC} to give sufficient conditions for the Riesz basis property of problem \eqref{eq:sp} subject to linear self-adjoint $\lambda$-dependent boundary conditions.

For problem \eqref{eq:sp}, \eqref{eq:gbc} with $C,D$ satisfying \eqref{CD} and with the weight function $r$ which is locally odd at the boundary (and hence has an odd number of turning points) we improve the existing results in several ways. First, in Subsection~\ref{ss:4.2} we prove that the Riesz basis property of \eqref{eq:sp}, \eqref{eq:gbc} depends only on the local behavior of $r$ in neighborhoods of the turning points if and only if the boundary conditions \eqref{eq:gbc} are not row equivalent to the boundary conditions in \eqref{nonseparated}.   In other words, additional conditions on $r$ in a neighborhood of the boundary are needed for the Riesz basis property of \eqref{eq:sp}, \eqref{eq:gbc} if and only if \eqref{eq:gbc} are  row equivalent to \eqref{nonseparated}. Since the boundary conditions in \eqref{nonseparated} include the antiperiodic boundary conditions this result provides an abstract framework for the example in \cite{BC2}. Further assuming that $r$ is locally odd-dominated at all turning points in Theorem~\ref{ThnMain} we give a necessary and sufficient conditions for the Riesz basis property of the general problem (\ref{eq:sp}), (\ref{eq:gbc}).

In Section~5 we apply our result for problem \eqref{nonseparated} to the regular HELP-type inequality
\begin{equation} \label{Everitt}
\Big(\int_{0}^b \frac{1}{r} \,|f'|^2 + q |f|^2 \; dx \Big)^2 \leq K \Big(\int_{0}^b |f|^2\, dx\Big) \Big(
\int_{0}^b \Big|-\Big(\frac{1}{r} f'\Big)' + q f\Big|^2  dx\Big)  
\end{equation}
where $b > 0$, $q, r \in L^1[0,b]$ and $r > 0$ a.e. on $(0,b)$. In fact, with the odd extension of $r$ to $[-b,b]$ and an arbitrary extension of $q \in L^1[-b,b]$ a close relation between the Riesz basis property of \eqref{separated} and the validity of inequality \eqref{Everitt} for all suitable functions $f$ with $(f'/r)(b)=0$ was established in \cite{V, BF2, K2, K3}.  The restriction $(f'/r)(b)=0$ originates from the approach to inequality \eqref{Everitt} used in \cite{Et, EE1, EE2}.
However, in  \cite{Bz, Bz1} Bennewitz studied inequality \eqref{Everitt}
without this restriction.  In Subsection~\ref{sec:bennezitz} we show that the validity of inequality \eqref{Everitt}
in Bennewitz' sense is equivalent to the Riesz basis property of \eqref{nonseparated} with an additional algebraic condition on the solutions $f$ of $-(\frac{1}{r} f')' + q f=0$ which goes back to Bennewitz.

\section{A reduction of the general problem to some specific problems}\label{reduction}

\subsection{The operator associated with the boundary value problem}
With problem \eqref{eq:sp}, \eqref{eq:gbc} we associate the operator $L_{\mC,\mD}$ defined in $L^2_{|r|}[a,b]$ by the differential expression
\begin{equation*} 
\ell[f] = \frac{1}{r}\left(-f''+qf\right)
\end{equation*}
on the domain
\begin{equation*} 
 \dom(L_{\mC,\mD})=\big\{f\in L^2_{|r|}[a,b]: f, f'\in AC[a,b], \,  
 \ell[f]\in L^2_{|r|},  f  \
  \text{satisfies \eqref{eq:gbc}} \big\}.
\end{equation*}
The following two inner products on $L^2_{|r|}[a,b]$ arise naturally in this setting:
\[
[f,g] := \int_a^b f \bar{g} \ r \; dx, \qquad(f,g) := \int_a^b f \bar{g}\ |r| \; dx.
\] 
The space $\bigl(L^2_{|r|}[a,b],[\cdot,\cdot]\bigr)$ is a Krein space,  $J f := \sgn(r) f, f \in L^2_{|r|}[a,b],$ is a fundamental symmetry on $L^2_{|r|}[a,b]$ and $(\cdot,\cdot) = [J\cdot,\cdot]$ is the associated Hilbert space inner product.  If $C$ and $D$ satisfy \eqref{CD}, it was shown in \cite{CL} that the operator $L_{\mC,\mD}$ 
is definitizable in $(L^2_{|r|}[a,b], [\cdot,\cdot])$, its spectrum is discrete and infinity is its critical point. For the theory of definitizable operators and their critical points we refer to \cite{L}. The following result from \cite{CN} is useful in this context.

\begin{proposition}[\cite{CN}]\label{ThmCurgusNaiman}
The problem \eqref{eq:sp}, \eqref{eq:gbc}, \eqref{CD} has the  Riesz basis property  if and only if infinity is a regular critical point of $L_{\mC,\mD}$.
\end{proposition}

\subsection{A classification of the boundary conditions}\label{operators}
We recall that throughout the paper we assume that $\mC,\mD\in \dC^{2\times 2}$ satisfy \eqref{CD}.  Notice that if
$\mC_1 = M \mC$ and $\mD_1 = M\mD$ with an invertible $M\in \dC^{2\times 2}$, then $\mC_1,\mD_1\in \dC^{2\times 2}$ satisfy \eqref{CD} and  $L_{\mC,\mD} = L_{\mC_1, \mD_1}$. Therefore, we distinguish three main cases depending on $\rank \mC \in \{0, 1, 2\}$.

\subsubsection{The case $\rank \mC=2$:}\label{sub:rank2}
In this case one can choose $\mC$ and $\mD$ such that
\[
\mC=I_2,\quad \mD = \mB = \mB^* = \left(\!\!\begin{array}{cc}
 b_{11} & b_{12}\\
 \overline{b}_{12} & b_{22}
 \end{array}\!\!\right).
\]
Then, \eqref{eq:gbc} has the following form
\begin{equation}\label{eq:c1}
\begin{cases}
f'(a) = b_{11}f(a)+b_{12}f(b)\\
-f'(b) = \overline{b}_{12}f(a)+b_{22}f(b)
\end{cases}.
\end{equation}
In this case we set $L_{\mB} := L_{I,\mB}$,
i.e. $L_{\mB} = L_{\mC,\mD}$ with $\mC=I_2,\; \mD=\mB=\mB^*$.
Note that the boundary conditions (\ref{eq:c1}) are separated precisely when $\mB$ is diagonal, i.e., $b_{12}=0$.

\subsubsection{The case $\rank \mC = 0$:}
Since we assume that $\rank(\mC|\mD)=2$,  we can set
\[
\mC=0,\quad \mD=I_2.
\]
That is, this case corresponds to the Dirichlet boundary conditions
\begin{equation}\label{eq:c3}
f(a)=f(b)=0
\end{equation}
and we denote the corresponding operator by $L^F := L_{\mC,\mD}$.

\subsubsection{The case $\rank \mC=1$:}
Without loss of generality we can assume that the matrix $\mC$
admits the following representation
\[
\mC=\left(\!\!\begin{array}{cc}
 c_{1} & c_{2}\\
 0 & 0
 \end{array}\!\!\right),\quad |c_1|+|c_2|\neq 0.
\]
Further, the symmetry condition (\ref{CD}) can be rewritten as follows:
\[
\rank(\mC|\mD)=2,\quad
\begin{cases}
c_1\overline{d}_{11}+c_2\overline{d}_{12}\in \dR\\
c_1\overline{d}_{21}+c_2\overline{d}_{22}=0
\end{cases}.
\]
Again there are three possibilities:
\begin{enumerate}
\item[a)] $c_1=0$, \ $c_2\neq 0$,
\item[b)] $c_1\neq0$, \ $c_2= 0$,
\item[c)] $c_1c_2\neq 0$.
\end{enumerate}
Consider these cases in detail.
\begin{enumerate}
\item[a)] If $c_1=0$, then we can choose
$\mC=\left(\!\!\begin{array}{cc}
 0 & 1\\
 0 & 0
 \end{array}\!\!\right)$ and hence the symmetry conditions (\ref{CD}) yield that the matrix $\mD$ admits the following representation
 \[
 \mD=\left(\!\!\begin{array}{cc}
 0 & d\\
 1 & 0
 \end{array}\!\!\right),\quad d\in \dR.
 \]
Therefore, \eqref{eq:gbc} takes the following form
\begin{equation}\label{eq:c2.1}
f(a)=0,\qquad f'(b)+df(b)=0,\quad d\in\dR.
\end{equation}
In this case we denote the associated operator by $L^b_d := L_{\mC,\mD}$.
\item[b)] Similarly, if $c_2=0$, then $\mC$ and $\mD$ are as follows
\[
\mC=\left(\!\!\begin{array}{cc}
 1 & 0\\
 0 & 0
 \end{array}\!\!\right),\qquad
 \mD=\left(\!\!\begin{array}{cc}
 d & 0\\
 0 & 1
 \end{array}\!\!\right),\quad d\in \dR,
\]
and the boundary conditions \eqref{eq:gbc} become
\begin{equation}\label{eq:c2.2}
f'(a)-df(a)=0, \qquad f(b)=0,\quad d\in\dR.
\end{equation}
In this case we denote the associated operator by $L^a_d := L_{\mC,\mD}$.

\item[c)] Finally, let $c_1c_2\neq 0$. Then we can set
$\mC=\left(\!\!\begin{array}{cc}
 1 & \overline{c}\\
 0 & 0
 \end{array}\!\!\right)$, where $c\neq 0$. Substituting $\mC$ into the second condition in (\ref{CD}), we arrive at the following conditions
 \[
\rank(\mC|\mD)=2,\quad
\begin{cases}
d_{11} + c d_{12}\in \dR \\
d_{21} + c d_{22} = 0
\end{cases}.
 \]
The conditions $\rank(\mC|\mD)=2$ and $d_{21} + c d_{22} = 0$ imply $d_{21}d_{22}\neq 0$. Therefore, we can choose $d_{22}=1$ and $d_{21}=-c$. Finally, we arrive at the following representation
 \begin{equation}\label{eq:c2.3matr}
 \mC=\left(\!\!\begin{array}{cc}
 1 & \overline{c}\\
 0 & 0
 \end{array}\!\!\right),\quad
 \mD=\left(\!\!\begin{array}{cc}
 d & 0\\
 -c & 1
 \end{array}\!\!\right),\quad c\in\dC\setminus\{0\},\ d\in\dR,
 \end{equation}
 and hence \eqref{eq:gbc} takes the form
 \begin{equation}\label{eq:c2.3}
\begin{cases}
c f(a)=f(b) \\
f'(a)-\overline{c}f'(b)=df(a)
\end{cases},\quad d\in\dR,\ c\neq 0.
\end{equation}
The operator associated with the boundary conditions \eqref{eq:c2.3} will be denoted by $L_{c,d}$, i.e., $L_{c,d}:=L_{\mC,\mD}$ when $\mC,\mD$ have the form \eqref{eq:c2.3matr}.
Note that the boundary conditions in \eqref{nonseparated}
are of the form \eqref{eq:c2.3} with $c=e^{it}$.
\end{enumerate}

\subsection{Form domains}
In this subsection we use some known results on form domains
to get further simplifications of the boundary conditions and of the coefficients in the differential expression.  The operator $L_{\mC,\mD}$ induces the quadratic form
\[
\gt_{\mC,\mD}^0[f] := [L_{\mC,\mD} f,f], \qquad f \in \dom(\gt_{\mC,\mD}^0):=\dom(L_{\mC,\mD})
\]
in $L^2_{|r|}[a,b]$.
The form $\gt_{\mC,\mD}^0$ is bounded from below and closable
in the Hilbert space $L^2_{|r|}[a,b]$ (see, e.g. \cite[\S 13]{W}). Denote by $\gt_{\mC,\mD}$ its closure in $L^2_{|r|}[a,b]$, $\gt_{\mC,\mD}=\overline{\gt_{\mC,\mD}^0}$. For the particular cases of $L^F$, $L_{c,d}$ and $L_\mB$
this form is denoted by $\gt^F$, $\gt_{c,d}$, and $\gt_\mB$,
respectively.
The following  description of the domains of
$\gt^F$, $\gt_{c,d}$ and $\gt_\mB$
was obtained by M.G. Krein \cite[\S 6]{kre46} in the case $r\equiv 1$.
However, the methods used there extend to the case of arbitrary weights $r\in L^1[a,b]$
(see also \cite[Section 2.2]{CL}).

\begin{proposition}
\label{ThmFormDomain}
The form domain of $\gt_{\mC,\mD}$ does not depend on $q \in L^1[a,b]$.
Moreover, the form domains of
$\gt^F$, $\gt_{c,d}$ and $\gt_\mB$ are given by
\begin{equation} \notag 
 \dom(\gt^F)=\big\{f\in L^2_{|r|}[a,b]: \, f\in AC[a,b], \int_a^b|f'|^2dx<\infty,\ f(a)=f(b)=0\big\},
\end{equation}
and, for $c\neq 0, d\in\dR$,
\begin{equation} \notag 
 \dom\big(\gt_{c,d}\big)=\big\{f\in L^2_{|r|}[a,b]: \, f\in AC[a,b], \int_a^b|f'|^2dx<\infty,\ c f(a)=f(b)\big\},
\end{equation}
and, for $\mB=\mB^*$,
\begin{equation} \notag 
 \dom\big(\gt_\mB\big)=\big\{f\in L^2_{|r|}[a,b]: \ f\in AC[a,b],\ \int_a^b|f'|^2dx<\infty\big\}.
\end{equation}
\end{proposition}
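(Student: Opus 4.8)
The plan is to prove Proposition~\ref{ThmFormDomain} by first establishing the statement for a convenient model operator (say $L^F$ with $q\equiv 0$), and then transferring it to the other boundary conditions and to general $q\in L^1[a,b]$ by perturbation arguments. The starting point is the explicit integration-by-parts identity: for $f\in\dom(L_{\mC,\mD})$ one has
\begin{equation*}
\gt_{\mC,\mD}^0[f]=[L_{\mC,\mD}f,f]=\int_a^b\bigl(|f'|^2+q|f|^2\bigr)\,dx+\text{(boundary term)},
\end{equation*}
where the boundary term is a Hermitian form in $f(a),f(b)$ determined by $\mC,\mD$ (it vanishes for $L^F$, equals the quadratic form of $\mB$ for $L_\mB$, and is a multiple of $|f(a)|^2$ after imposing $cf(a)=f(b)$ in the case $L_{c,d}$). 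Thus on the operator domain the form agrees, up to a lower-order bounded perturbation, with $f\mapsto\int_a^b|f'|^2\,dx$ restricted to the appropriate boundary conditions. Since the set on the right-hand side of each displayed formula is a closed subspace of $H^1$ (hence $f\in AC[a,b]$, $f'\in L^2$) on which the $q$-term is form-bounded with relative bound $0$ with respect to $\int|f'|^2$, the closure of $\gt^0_{\mC,\mD}$ will have exactly that domain provided $\dom(L_{\mC,\mD})$ is a form core.

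The key steps, in order, are: (i) verify the integration-by-parts formula and read off the boundary term in each of the three cases, thereby reducing everything to the $q\equiv0$ situation via a bounded (indeed relatively form-compact) perturbation — this uses that multiplication by $q\in L^1[a,b]$ is bounded from $H^1[a,b]$ into $L^2_{|r|}[a,b]$ by a Sobolev-type estimate, and in particular gives the claimed $q$-independence of the form domain; (ii) identify, for $q\equiv0$, the form $\gt^0$ as the restriction of the Dirichlet form $\int_a^b|f'|^2\,dx$ to $\dom(L_{\mC,\mD})$ plus a boundary contribution, and show that its closure is the same form on the full $H^1$-type space described in the proposition — for this one checks that $C^\infty$-functions satisfying the (finitely many, closed) boundary conditions are dense in the $H^1$-norm in the candidate form domain, which is standard; (iii) conclude equality of the closures. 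For $L^F$ one gets the Dirichlet Sobolev space with $f(a)=f(b)=0$; for $L_{c,d}$ the boundary term after imposing $cf(a)=f(b)$ depends only on $f(a)$, so it is bounded relative to the Dirichlet form on $\{f\in H^1:cf(a)=f(b)\}$ (using $|f(a)|^2\le\varepsilon\int|f'|^2+C_\varepsilon\int|f|^2$), yielding the stated domain with the single linear constraint; for $L_\mB$ there is no constraint at all, only a bounded boundary perturbation, giving the full space.

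The main obstacle I expect is step (ii): showing that $\dom(L_{\mC,\mD})$ is a form core, i.e. that it is dense in the candidate form domain in the graph norm of $\gt_{\mC,\mD}$ (equivalently in the $H^1$-norm). The subtlety is that membership in $\dom(L_{\mC,\mD})$ requires $f,f'\in AC[a,b]$ and $\ell[f]\in L^2_{|r|}$, which is a second-order condition, so one cannot simply truncate or mollify arbitrary $H^1$-functions and preserve the boundary conditions; one must instead approximate a given $f$ in the candidate domain by solving inhomogeneous boundary value problems (or by adding to a smooth approximant a correction term solving $-u''=g$ with appropriate boundary data) to land back in $\dom(L_{\mC,\mD})$ while controlling the $H^1$-norm. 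This is exactly the point where one invokes Krein's argument from \cite[\S 6]{kre46} and its extension in \cite[Section 2.2]{CL}: the construction there is insensitive to whether the weight in the $L^2$-norm is $1$ or a general $r\in L^1[a,b]$, because $r$ enters only through the (equivalent) $L^2_{|r|}$-topology on a space where the relevant functions are already continuous, and the boundary conditions and the Dirichlet integral $\int_a^b|f'|^2\,dx$ do not see $r$ at all. Once the form core property is in hand, the identification of the closed form domains follows from the explicit formulas above together with the elementary interpolation/trace inequalities used to absorb the lower-order and boundary terms.
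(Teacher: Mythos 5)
The paper offers no proof of this proposition at all: it simply attributes the description of the form domains to Krein \cite[\S 6]{kre46} (for $r\equiv 1$) and asserts that his methods extend to $r\in L^1[a,b]$, citing \cite[Section 2.2]{CL}. Your sketch is a correct reconstruction of that standard argument (integration by parts, KLMN-type absorption of the $q$-term and boundary terms via $|f(a)|^2+\|f\|_\infty^2\le\varepsilon\int_a^b|f'|^2\,dx+C_\varepsilon\int_a^b|f|^2|r|\,dx$), and you correctly isolate the one genuinely nontrivial step --- that $\dom(L_{\mC,\mD})$ is a form core, which fails to follow from naive mollification because $\ell[f]\in L^2_{|r|}$ is a restrictive second-order condition when $1/|r|\notin L^1$ --- deferring it to exactly the references the paper itself relies on.
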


The following proposition is an immediate consequence of \cite[Corollary~3.6]{C}.

\begin{proposition}\label{ThmCurgusForm}
Let $q, r \in L^1[a,b]$ and assume that both pairs $\mC,\mD \in \dC^{2\times 2}$ and $\tilde{\mC},\tilde{\mD} \in \dC^{2\times 2}$ satisfy  \eqref{CD}.  Further assume $\dom\bigl(\gt_{\mC,\mD}\bigr) = \dom\bigl(\gt_{\tilde{\mC},\tilde{\mD}}\bigr)$.  Then infinity is a regular critical point of $L_{\mC,\mD}$ if and only if infinity is a regular critical point of $L_{\tilde{\mC},\tilde{\mD}}$.
\end{proposition}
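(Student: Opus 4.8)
The plan is to deduce Proposition~\ref{ThmCurgusForm} directly from \cite[Corollary~3.6]{C} by checking that the abstract hypotheses of that corollary are met in the present concrete situation. First I would recall the general set-up behind \cite{C}: one has the Hilbert space $H=L^2_{|r|}[a,b]$, the fundamental symmetry $Jf=\sgn(r)f$, and the two equivalent norms coming from $[\cdot,\cdot]$ and $(\cdot,\cdot)$; the operators $L_{\mC,\mD}$ and $L_{\tilde\mC,\tilde\mD}$ are $J$-self-adjoint (equivalently, self-adjoint in the Krein space $(H,[\cdot,\cdot])$) and definitizable with discrete spectrum and with $\infty$ as their only possible critical point, all of this being exactly what \cite{CL} provides under \eqref{CD}. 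The regularity of the critical point $\infty$ is, by the theory in \cite{L}, equivalent to the existence of a bounded spectral projection onto the spectral subspace corresponding to a neighbourhood of $\infty$, which in turn (this is the content of \cite[Corollary~3.6]{C}) depends only on the form $\gt_{\mC,\mD}$ up to a lower-order, i.e. relatively form-bounded with relative bound $0$, perturbation of the associated nonnegative form. So the key point to verify is that the semibounded closed forms $\gt_{\mC,\mD}$ and $\gt_{\tilde\mC,\tilde\mD}$, which by hypothesis share the same domain, differ by such a lower-order perturbation.

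The main steps would be as follows. I would first note that by Proposition~\ref{ThmFormDomain} the form domain of $\gt_{\mC,\mD}$ is independent of $q\in L^1[a,b]$, so after subtracting a fixed large constant times the identity we may assume that both forms are strictly positive and that the associated form norms $\|f\|_{\gt_{\mC,\mD}}^2=\gt_{\mC,\mD}[f]+C(f,f)$ are genuine Hilbert-space norms on the common domain $\cD:=\dom(\gt_{\mC,\mD})=\dom(\gt_{\tilde\mC,\tilde\mD})$. The explicit descriptions of these domains (as subspaces of $AC[a,b]$ with $\int_a^b|f'|^2\,dx<\infty$ cut out by one linear boundary constraint, or none) show that on $\cD$ the quantity $\int_a^b|f'|^2\,dx$ is the dominant part of every such form norm, and that two such forms differ only through the boundary terms and through the $q|f|^2$ term, each of which is controlled by $\varepsilon\int_a^b|f'|^2\,dx+C_\varepsilon(f,f)$ for every $\varepsilon>0$: for the $q$-term this is the standard Sobolev/interpolation estimate for $q\in L^1[a,b]$, and for the boundary terms one uses that $|f(a)|^2,|f(b)|^2\le \varepsilon\int_a^b|f'|^2\,dx+C_\varepsilon\int_a^b|f|^2\,dx$. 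Hence the two form norms on $\cD$ are equivalent, and the difference of the forms is form-bounded with respect to either with relative bound $0$. This is precisely the situation covered by \cite[Corollary~3.6]{C}, which then yields that $\infty$ is a regular critical point of $L_{\mC,\mD}$ if and only if it is a regular critical point of $L_{\tilde\mC,\tilde\mD}$.

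I expect the only real obstacle to be bookkeeping rather than substance: one must make sure that \cite[Corollary~3.6]{C} is quoted in the exact form it was proved, namely in terms of equality (or $0$-relatively-bounded perturbation) of the underlying forms together with the abstract definitizability framework, and that the normalization by a shift $+C$ does not affect regularity of the critical point at $\infty$ (it does not, since shifting by a bounded self-adjoint operator that is also bounded with respect to $(\cdot,\cdot)$ leaves the critical point $\infty$ and its regularity unchanged). Once the identification $\dom(\gt_{\mC,\mD})=\dom(\gt_{\tilde\mC,\tilde\mD})$ is in hand by hypothesis, the statement is essentially immediate from \cite[Corollary~3.6]{C}, so the proof should be short: cite the corollary, observe that its hypotheses hold here, and conclude.
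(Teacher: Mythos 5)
Your proposal is correct and follows essentially the same route as the paper, which simply observes that the statement is an immediate consequence of \cite[Corollary~3.6]{C} once the form domains are known to coincide. The additional verification you carry out (equivalence of the form norms and relative form-boundedness of the difference of the forms with relative bound $0$) is harmless and consistent with the explicit domain descriptions in Proposition~\ref{ThmFormDomain}, but the paper does not need it: equality of the closed form domains is already the hypothesis under which \cite[Corollary~3.6]{C} is invoked.
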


By Proposition \ref{ThmFormDomain} the closed form domains do not depend either on the choice of the potential
$q \in L^1[a,b]$ or on $\mB=\mB^*$ and $d\in\dR$.
Therefore, Proposition \ref{ThmCurgusForm} together with Proposition \ref{ThmCurgusNaiman} immediately imply

\begin{corollary}\label{CorCurgus}
Let $q, r \in L^1[a,b]$.
\begin{enumerate}
\item[(i)] If problem \eqref{eq:sp}, \eqref{eq:gbc} has the Riesz basis property for one $q \in L^1[a,b]$, then it has the Riesz basis property for all $q \in L^1[a,b]$.
\item[(ii)] If problem \eqref{eq:sp}, \eqref{eq:c1} has the Riesz basis property for one matrix $\mB=\mB^*\in \dC^{2\times 2}$, then it has the Riesz basis property for all $\mB=\mB^*\in \dC^{2\times 2}$.
\item[(iii)] If problem \eqref{eq:sp}, \eqref{eq:c2.3} has the Riesz basis property for one $d\in\dR$, then it has the Riesz basis property for all $d\in\dR$.
\end{enumerate}
\end{corollary}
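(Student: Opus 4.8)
The plan is to deduce Corollary~\ref{CorCurgus} directly from the three preceding results: Proposition~\ref{ThmCurgusNaiman} (Riesz basis property $\Leftrightarrow$ infinity is a regular critical point of $L_{\mC,\mD}$), Proposition~\ref{ThmFormDomain} (the closed form domain $\dom(\gt_{\mC,\mD})$ is independent of $q\in L^1[a,b]$, and in the special cases $\dom(\gt^F)$, $\dom(\gt_{c,d})$, $\dom(\gt_\mB)$ are described explicitly), and Proposition~\ref{ThmCurgusForm} (regularity of infinity as a critical point depends only on the form domain). Each of the three items is the same three-line argument, only the reason for the equality of form domains changes.

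For item (i), fix two potentials $q_0, q_1 \in L^1[a,b]$ and keep $\mC,\mD$ fixed. Write $L_{\mC,\mD}^{(q_0)}$ and $L_{\mC,\mD}^{(q_1)}$ for the two operators. By Proposition~\ref{ThmFormDomain} the first assertion gives $\dom(\gt_{\mC,\mD}^{(q_0)}) = \dom(\gt_{\mC,\mD}^{(q_1)})$, since the form domain does not depend on $q$. Now apply Proposition~\ref{ThmCurgusForm} with $(\mC,\mD) = (\tilde\mC,\tilde\mD)$ but with the two different potentials: it yields that infinity is a regular critical point of $L_{\mC,\mD}^{(q_0)}$ iff it is a regular critical point of $L_{\mC,\mD}^{(q_1)}$. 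Then Proposition~\ref{ThmCurgusNaiman} translates both statements back into the Riesz basis property, giving the equivalence. (Strictly, Proposition~\ref{ThmCurgusForm} is phrased for a fixed $q$ on both sides, but its proof clearly only uses the equality of form domains; alternatively, since the form domains agree, one may absorb the difference $q_1 - q_0 \in L^1[a,b]$ into a bounded perturbation of the form and invoke the same corollary of \cite{C}. I would state this explicitly to avoid any gap.)

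For item (ii), take two Hermitian matrices $\mB_0 = \mB_0^*$ and $\mB_1 = \mB_1^*$. By the third formula in Proposition~\ref{ThmFormDomain}, $\dom(\gt_{\mB_0}) = \dom(\gt_{\mB_1})$, because the displayed description of $\dom(\gt_\mB)$ makes no reference to the entries of $\mB$ at all. Hence Proposition~\ref{ThmCurgusForm} applies with $(\mC,\mD) = (I_2,\mB_0)$ and $(\tilde\mC,\tilde\mD) = (I_2,\mB_1)$, and Proposition~\ref{ThmCurgusNaiman} finishes the argument exactly as before. Item (iii) is identical: for $d_0, d_1 \in \dR$ and a fixed $c\neq 0$, the second formula in Proposition~\ref{ThmFormDomain} shows $\dom(\gt_{c,d_0}) = \dom(\gt_{c,d_1})$ since the constraint $c f(a) = f(b)$ does not involve $d$; then Propositions~\ref{ThmCurgusForm} and \ref{ThmCurgusNaiman} give the claim.

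There is no serious obstacle here; the corollary is an assembly of the three cited propositions. The one point requiring care — and the only thing I would actually write out — is the bookkeeping in item (i) regarding whether the two potentials are allowed to differ on the two sides of Proposition~\ref{ThmCurgusForm}. The cleanest route is to observe that $\gt_{\mC,\mD}^{(q_1)}[f] = \gt_{\mC,\mD}^{(q_0)}[f] + \int_a^b (q_1 - q_0)|f|^2\,dx$ on the common form domain, that the second term is a form-bounded perturbation with relative bound $0$ (standard for $L^1$ potentials on a bounded interval, using that the form domain embeds into $\{f \in AC[a,b] : \int_a^b |f'|^2\,dx < \infty\}$ and hence into $C[a,b]$), so both forms are closed with the same domain, and then invoke \cite[Corollary~3.6]{C} — equivalently Proposition~\ref{ThmCurgusForm} — which only needs the equality of the closed form domains. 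Items (ii) and (iii) do not even need this remark, since there the operators genuinely differ only through the boundary conditions while sharing a potential.
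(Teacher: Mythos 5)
Your proposal is correct and follows essentially the same route as the paper: the authors likewise combine Proposition~\ref{ThmFormDomain} (form domains independent of $q$, $\mB$, $d$) with Proposition~\ref{ThmCurgusForm} and Proposition~\ref{ThmCurgusNaiman}, stating that these ``immediately imply'' the corollary. Your extra remark on whether Proposition~\ref{ThmCurgusForm} tolerates different potentials on the two sides is a reasonable point of care that the paper glosses over, but it does not change the argument.
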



\subsection{Conclusion}\label{sub:conclusion}
In order to discuss the Riesz basis property of (\ref{eq:sp}), (\ref{eq:gbc}), i.e. of the operator $L_{\mC,\mD}$,
it is enough to treat the eigenvalue problem (\ref{eq:sp}) together
with the boundary conditions of the types
(\ref{eq:c1}), (\ref{eq:c3}), (\ref{eq:c2.1}), (\ref{eq:c2.2}) and (\ref{eq:c2.3}),
i.e. the operators $L_\mB, L^F, L^b_d, L^a_d$ and $L_{c,d}$. Moreover,
by Corollary \ref{CorCurgus}
it is no restriction to assume that
\begin{enumerate}
\item[(i)] $q\equiv 0$,
\item[(ii)] $\mB=0$ if we consider problem (\ref{eq:sp}), (\ref{eq:c1}), i.e., the operator $L_\mB$,
\item[(iii)] $d=0$ if we consider problem (\ref{eq:sp}), (\ref{eq:c2.3}), i.e., the operator $L_{c,d}$.
\end{enumerate}
Note that the boundary conditions (\ref{eq:c3}), (\ref{eq:c2.1}), (\ref{eq:c2.2}), and (\ref{eq:c1}) with $b_{12}=0$
are separated and the boundary conditions (\ref{eq:c1}) with $b_{12} \neq 0$ and (\ref{eq:c2.3}) are non-separated.

\section{A single turning point and Dirichlet boundary conditions}

In this section we consider the eigenvalue problem \eqref{separated}
(with Dirichlet boundary conditions)
with $a=-1, \; b=1$. We also assume that $r$ has one turning point $x_1 = 0$. By Corollary~\ref{CorCurgus} and Subsection~\ref{sub:conclusion} we can additionally assume that $q = 0$ and $xr(x) > 0$ for a.a. $x \in [-1,1]$ without restriction of generality. Then problem \eqref{separated} takes the form
\begin{equation}\label{separated:-1,1}
-f''  = {\lambda} r f
\quad \mbox{on} \quad [-1,1], \qquad
f(-1) = f(1) = 0.
\end{equation}
In this section we recall some known conditions for the Riesz basis property of \eqref{separated:-1,1} and present some improvements and extensions of these results.

\subsection{Parfenov's Theorem}
We need the following notation
\begin{equation}\label{eq:i_pm_no_x}
I^+(x):=\int_{0}^{x}|r| dt,\ \
I^-(x):=\int_{-x}^{0}|r| dt,\ \
I(x):=I^+(x) + I^-(x)=\int_{-x}^{x}|r| dt
\end{equation}
for $x \in [0,1]$.
The next result is a combination of \cite[Corollary 4, Theorem 6]{P} and \cite[Theorem 3, Corollary 8]{Pa2}.

\begin{theorem}[Parfenov]\label{th:parfenov}
Let $r\in L^1[-1,1]$ and $xr(x)>0$ for a.a. $x \in [-1,1]$.
\begin{enumerate}
 \item[(i)]
Problem \eqref{separated:-1,1} has the Riesz basis property if one of the following conditions is satisfied:

\begin{enumerate}
 \item[(APsuf)]
there exist $C>0$, $\beta>0$ such that for all $0<t\le x\le 1$
\begin{equation}
\min\{I^-(t),I^+(t)\}\le C\left(\frac{t}{x}\right)^\beta I(x); \nonumber
\end{equation}
\item[(API+)]
there exists $t\in (0,1)$ such that
\begin{equation}
I^+(xt)\le \frac{1}{2}I^+(x),\quad x\in (0,1).  \nonumber
 \end{equation}
\end{enumerate}

\item[(ii)] If problem \eqref{separated:-1,1} has the Riesz basis property, then
the following condition is satisfied:
\begin{enumerate}
\item[(APnec)]
there exist $t\in(0, 1]$ and $C\in (0,1/4)$ such that for all
$x\in (0,1)$
\begin{equation}
I^-(xt)I^+(xt)\le C I(xt)I(x). \nonumber
\end{equation}
\end{enumerate}
\item[(iii)] If $r$ is odd, then the conditions {\rm (APsuf)}, {\rm (APnec)}, and {\rm (API+)}
are all equivalent and further equivalent to the Riesz basis property of problem \eqref{separated:-1,1}.
\end{enumerate}
\end{theorem}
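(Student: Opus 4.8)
I would derive Theorem~\ref{th:parfenov} from Proposition~\ref{ThmCurgusNaiman} together with Pyatkov's interpolation criterion. By Proposition~\ref{ThmCurgusNaiman} the Riesz basis property of \eqref{separated:-1,1} is equivalent to $\infty$ being a regular critical point of $L^F$. Let $L=-d^2/dx^2$ with Dirichlet boundary conditions, a positive self-adjoint operator in the \emph{unweighted} space $L^2[-1,1]$, with $\dom(L)=\{f\in H^2[-1,1]:f(\pm1)=0\}$ and $\dom(L^{1/2})=\mathring H^1[-1,1]:=\{f\in H^1[-1,1]:f(\pm1)=0\}$, this form domain being $\dom(\gt^F)$ from Proposition~\ref{ThmFormDomain} (with $q=0$). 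Pyatkov's criterion \cite{sP} states that $\infty$ is a regular critical point of $L^F$ if and only if
\[
\mathring H^1[-1,1]=\bigl[\,L^2_{|r|}[-1,1],\ \dom(L)\,\bigr]_{1/2}
\]
with equivalence of norms, the \emph{weighted} space $L^2_{|r|}$ being the interpolation pivot. Since $L$ is the unweighted operator while the pivot carries the weight $|r|$, which may degenerate or blow up at the turning point, this is a genuine condition on $r$; away from $0$ the weight is inert, so all the information sits in the behaviour of $|r|$ near the turning point.

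\textbf{Localization, a two-weight inequality, and part (i).} A gluing argument --- using that near any non-turning point the problem is locally definite, hence unconditionally has a Riesz (in fact orthonormal) basis --- reduces the identity above to its local version in a neighbourhood of $0$. Parametrising the two sides of the turning point by the strictly increasing functions $x\mapsto I^+(x)$ and $x\mapsto I^-(x)$ and decomposing dyadically at scales $x\sim 2^{-k}$, I would characterise membership of a function in $[L^2_{|r|},\dom(L)]_{1/2}$ near $0$ by a weighted $\ell^2$-type condition on its dyadic pieces, the weights being assembled from the numbers $I^+(2^{-k})$ and $I^-(2^{-k})$; the interpolation identity then becomes a two-weight (Hardy--Muckenhoupt type) inequality coupling the two sides. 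If (API+) holds then $I^+(t^n)\le 2^{-n}I^+(1)$, so the ``positive'' weights decay geometrically along a sequence of scales, and summing the resulting geometric series establishes the two-weight inequality, hence the identity and the Riesz basis property. If instead (APsuf) holds, the bound $\min\{I^-(t),I^+(t)\}\le C(t/x)^{\beta}I(x)$ dominates, on each scale, the smaller of the two local weights by a fixed power of the ratio of scales, again making the relevant sum converge; one concludes as before.

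\textbf{Parts (ii) and (iii).} For the necessity in (ii), assuming the interpolation identity, I would test it on the family $f_{x,t}$ of piecewise-linear functions that are a suitable constant on $[-xt,xt]$, decrease linearly to $0$ on $[xt,x]$ and on $[-x,-xt]$, and vanish outside $[-x,x]$; computing $\int|f_{x,t}'|^2\,dx$ and bounding the interpolation norm of $f_{x,t}$ from below --- the lower bound producing the product $I^-(xt)I^+(xt)$, with the constant $1/4$ arising from an AM--GM step $ab\le\tfrac14(a+b)^2$ applied to the two half-line contributions --- yields (APnec). For (iii), if $r$ is odd then $I^+\equiv I^-\equiv\tfrac12I$, and since $I$ is nondecreasing with $I(0+)=0$ the three conditions become: (API+) $\Llr$ $I(tx)\le\tfrac12I(x)$ for all $x$; (APnec) $\Llr$ $I(tx)\le 4C\,I(x)$ for all $x$ with $4C<1$; (APsuf) $\Llr$ $I(s)\le 2C(s/x)^{\beta}I(x)$ for $0<s\le x\le1$. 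These are mutually equivalent: (APsuf) $\Rightarrow$ (API+) on choosing the scale ratio $s/x$ small enough that $2C(s/x)^{\beta}\le\tfrac12$; (API+) $\Rightarrow$ (APnec) trivially (e.g.\ with $C=\tfrac18$); and (APnec) $\Rightarrow$ (APsuf) by iterating $I(t^{k}x)\le(4C)^{k}I(x)$ and comparing the intermediate scales, which produces the power bound with $\beta=\log(1/(4C))/\log(1/t)>0$. Together with (i) and (ii) this makes each of (APsuf), (APnec), (API+) equivalent to the Riesz basis property when $r$ is odd; note that (API+) is exactly the statement that $I^+$ is ``positively increasing''.

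\textbf{Expected main obstacle.} The heart of the proof, and the part I expect to be hardest, is the localization together with the dyadic/real-variable description of the interpolation norm of functions concentrated at the turning point: this is what converts the spectral statement about $L^F$ into the arithmetic conditions (APsuf), (API+), (APnec), and it requires delicate two-weight estimates (equivalently, a sharp form of Pyatkov's criterion in this concrete situation). A secondary technical point is the sharp constant $1/4$ in (ii), which forces the use of near-optimal rather than crude test functions.
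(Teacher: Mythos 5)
This theorem is not proved in the paper at all: it is imported verbatim as a combination of results of Parfenov (\cite[Corollary 4, Theorem 6]{P} and \cite[Theorem 3, Corollary 8]{Pa2}), so there is no internal argument to compare yours against. Your outline does correctly identify the route those sources take --- Proposition~\ref{ThmCurgusNaiman} to pass to the regularity of the critical point $\infty$, then Pyatkov's interpolation criterion, then a local analysis of $|r|$ near the turning point --- and the elementary part of your argument is sound: for odd $r$ one has $I^+=I^-=\tfrac12 I$, and your chain (APsuf) $\Rightarrow$ (API+) $\Rightarrow$ (APnec) $\Rightarrow$ (APsuf) (the last by iterating $I(t^kx)\le(4C)^kI(x)$ to extract the exponent $\beta=\log(1/(4C))/\log(1/t)$) is correct and, combined with (i) and (ii), closes the loop in (iii).

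The genuine gaps are in (i) and (ii), which is exactly where the content of Parfenov's papers lies. First, the interpolation identity you take as ``Pyatkov's criterion'' --- $\mathring H^1[-1,1]=[L^2_{|r|},\dom(L)]_{1/2}$ with $L$ the \emph{unweighted} Dirichlet Laplacian --- is not the standard formulation and is not obviously equivalent to the regularity of $\infty$; the criterion actually used in \cite{sP, P} is that the real interpolation space of the pair $\bigl(L^2_{|r|}[-1,1],\dom(\gt^F)\bigr)$ at exponent $1/2$ splits as the direct sum of the corresponding interpolation spaces over $[-1,0]$ and $[0,1]$ (equivalently, is invariant under $J=\sgn r$). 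Starting from an incorrectly stated criterion puts everything downstream at risk, so this needs to be fixed or justified. Second, the two key analytic steps --- the dyadic characterization of the interpolation norm near the turning point and the resulting two-weight inequality in (i), and the lower bound on the interpolation norm of the piecewise-linear test functions producing the product $I^-(xt)I^+(xt)$ with the sharp constant $1/4$ in (ii) --- are asserted, not carried out; you yourself flag them as the hard part. As written, the proposal is a plausible reconstruction of the strategy of \cite{P, Pa2} with a correct elementary component (iii), but (i) and (ii) are not proved.
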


\subsection{Positively increasing  functions}

In this subsection we point out a connection between Theorem~\ref{th:parfenov} and the class
of positively increasing functions (see, e.g., \cite{BGT, bks, Rog} and also \cite{K2, K3}).

\begin{definition}\label{def:osv}
Let $\varepsilon > 0$. Assume that $f:(0,\varepsilon)\to (0,\infty)$ is
nondecreasing and $\lim_{x \searrow 0} f(x)=0$. 
The function $f$ is called {\em positively increasing at $0$} (or simply {\em positively increasing})
if there exists a number $t\in (0,1)$ such that
\begin{equation}\label{eq:osv}
S_f(t):=\limsup_{x\to 0}\frac{f(xt)}{f(x)} 
 < 1.
\end{equation}
\end{definition}

\begin{remark}
The class of positively increasing functions
contains as proper subclasses the class of all regularly
varying functions in the sense of Karamata whose index is $\rho>0$
and the class of all rapidly varying functions in the sense of
de Haan whose index is $+\infty$ (cf. \cite{BGT}).
However, it contains no function from the class of slowly varying
functions whose index is $0$ (cf. \cite{BGT}).
\end{remark}

\begin{lemma}\label{lem:osv}
Let $f:(0,1)\to (0,\infty)$ be
nondecreasing and $\lim_{x \searrow 0} f(x)=0$.
Assume additionally that $f\in AC[0,1]$ and $f'$ is positive a.e. on $[0,1]$.
Then the following statements are equivalent:
\begin{enumerate}
\item[(i)] $f$ is positively increasing,
\item[(ii)] there are $C\in (0,1)$ and $t\in (0,1)$ such that 
\begin{equation}\label{eq:p1}
f(xt)\le Cf(x),\quad x\in (0,1),  \nonumber
 \end{equation}
 \item[(iii)] for each $C\in (0,1)$ there is $t\in (0,1)$  such that
\begin{equation}\label{eq:p1B}
f(xt)\le Cf(x),\quad x\in (0,1),   \nonumber
 \end{equation}
 \item[(iv)] there are  $C,\beta>0$ such that for all $t\in (0,1)$
 \begin{equation}\label{eq:p2}
 f(xt)\le C t^\beta f(x),\quad x\in (0,1),   \nonumber
 \end{equation}
\item[(v)] there are no sequences $(a_n), \, (b_n)$ such that $0<a_n<b_n\le 1$ and
\begin{equation} \notag 
\lim_{n\to \infty} \frac{a_n}{b_n}=0,\qquad \lim_{n\to \infty} \frac{f(a_n)}{f(b_n)}=1.
 \end{equation}
\end{enumerate}
\end{lemma}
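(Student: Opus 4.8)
The plan is to establish the cycle of implications (i) $\Rightarrow$ (v) $\Rightarrow$ (iii) $\Rightarrow$ (iv) $\Rightarrow$ (ii) $\Rightarrow$ (i), with the implication (iii) $\Rightarrow$ (iv) using the absolute continuity hypothesis on $f$ in an essential way; the remaining arrows are largely formal. First I would note the trivial implications: (iii) $\Rightarrow$ (ii) is immediate (pick any $C\in(0,1)$), and (iv) $\Rightarrow$ (ii) follows by choosing $t$ small enough that $Ct^\beta<1$. For (ii) $\Rightarrow$ (i): if $f(xt)\le Cf(x)$ for all $x\in(0,1)$ with $C<1$, then $S_f(t)=\limsup_{x\to 0}f(xt)/f(x)\le C<1$, so $f$ is positively increasing. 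Thus the substance is in (i) $\Rightarrow$ (v), (v) $\Rightarrow$ (iii), and (iii) $\Rightarrow$ (iv).

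For (i) $\Rightarrow$ (v) I would argue by contraposition. Suppose such sequences $(a_n),(b_n)$ exist. Fix the $t\in(0,1)$ from the definition of positively increasing, say with $S_f(t)=:\sigma<1$; choose an integer $k$ with $t^k\le \liminf a_n/b_n$ eventually — more precisely, since $a_n/b_n\to 0$, for each fixed $k$ we have $a_n\le t^k b_n$ for all large $n$, hence by monotonicity $f(a_n)\le f(t^k b_n)$. Iterating the defining inequality in limsup form, $f(t^k b_n)/f(b_n)$ has limsup at most $\sigma^k$ (one has to be mildly careful: apply $\limsup_{x\to 0}f(tx)/f(x)\le\sigma$ repeatedly along $x=b_n, tb_n,\dots$, using that $b_n\to 0$, which holds because $a_n/b_n\to 0$ forces $b_n\to 0$ unless $b_n$ stays bounded below — if $b_n\not\to 0$ then $a_n\to 0$ and $f(a_n)\to 0$ while $f(b_n)$ stays bounded away from $0$, contradicting $f(a_n)/f(b_n)\to 1$; so indeed $b_n\to 0$). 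Therefore $\limsup f(a_n)/f(b_n)\le \sigma^k\to 0$ as $k\to\infty$, contradicting $f(a_n)/f(b_n)\to 1$. For (v) $\Rightarrow$ (iii): negate (iii), so there is $C_0\in(0,1)$ such that for every $t\in(0,1)$ there exists $x_t\in(0,1)$ with $f(x_t t)>C_0 f(x_t)$. Taking $t=t_n\to 0$ and setting $a_n=x_{t_n}t_n$, $b_n=x_{t_n}$ gives $a_n/b_n=t_n\to 0$ and $C_0<f(a_n)/f(b_n)\le 1$ (the upper bound from monotonicity). Passing to a subsequence so that $f(a_n)/f(b_n)$ converges to some $L\in[C_0,1]$: if $L=1$ we are done; if $L<1$, I would use the $AC$ hypothesis to bootstrap — this is where the real content lies.

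The hard part, and the step I expect to be the main obstacle, is upgrading a single-scale contraction ratio $<1$ (as produced by (v) $\Rightarrow$ (iii) or, more delicately, the geometric-decay needed for (iii) $\Rightarrow$ (iv)) into the power bound (iv) valid for \emph{all} $t\in(0,1)$ uniformly in $x$. For (iii) $\Rightarrow$ (iv): given $C\in(0,1)$ and the corresponding $t_0$ with $f(xt_0)\le Cf(x)$ for all $x\in(0,1)$, iterate to get $f(xt_0^k)\le C^k f(x)$; then for arbitrary $t\in(0,1)$ write $t_0^{k+1}< t\le t_0^k$ for the appropriate $k\ge 0$, so by monotonicity $f(xt)\le f(xt_0^k)\le C^k f(x)$, and $C^k = (t_0^k)^{\beta}$ with $\beta:=\log(1/C)/\log(1/t_0)$, while $t_0^k< t/t_0$, giving $f(xt)\le t_0^{-\beta} C\, t^\beta f(x)/C = $ (adjusting constants) $\le C' t^\beta f(x)$; the $AC$ and positivity of $f'$ hypotheses are not strictly needed for this arrow but ensure $f$ is strictly increasing so the quantities behave well and $f$ is compatible with the application in Theorem \ref{th:parfenov}. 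The only place absolute continuity is genuinely indispensable is in handling the boundary case $L<1$ in (v) $\Rightarrow$ (iii) if one wants (iii) with \emph{every} $C$: there one invokes that $x\mapsto\log f(e^{-x})$ (well-defined by positivity, absolutely continuous by the $AC$ assumption, nondecreasing) has the property that its increments over a window of fixed length $\log(1/t_0)$ are bounded below by $\log(1/C_0)>0$ uniformly near $+\infty$, and then a telescoping argument over many windows yields arbitrarily small ratios, i.e. the full strength of (iii). I would organize the final writeup so that (iii) $\Rightarrow$ (iv) $\Rightarrow$ (ii) $\Rightarrow$ (i) $\Rightarrow$ (v) $\Rightarrow$ (iii) closes cleanly, placing the $AC$-based telescoping in the (v) $\Rightarrow$ (iii) step.
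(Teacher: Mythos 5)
The paper itself proves this lemma purely by citation (to [BF2, Proposition~3.11] for (i)$\Leftrightarrow$(ii) and to [P, Theorem~6] for (ii)$\Leftrightarrow$(iv)$\Leftrightarrow$(v), plus the remark that (iv)$\Rightarrow$(iii)$\Rightarrow$(ii) are obvious), so your self-contained argument is a genuinely different route, and most of your arrows are correct as written: (i)$\Rightarrow$(v) via the iterated limsup, (iii)$\Rightarrow$(iv) via iteration and interpolating $t$ between powers of $t_0$, and the trivial implications. The genuine gap is in (v)$\Rightarrow$(iii), the one arrow your cycle cannot function without. After negating (iii) you obtain ratios $f(a_n)/f(b_n)\to L\in[C_0,1]$ and correctly isolate $L<1$ as the problem, but the ``bootstrap'' is not carried out and, as described, is wrong on two counts. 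First, the negation of (iii) supplies, for each scale $t$, a \emph{single} point $x_t$ at which the decrement of $\log f$ over the multiplicative window $[x_tt,\,x_t]$ is \emph{at most} $\log(1/C_0)$; it does not give a uniform lower bound on decrements over all windows of fixed length near $0$, which is what your proposed telescoping would require (such a uniform bound is essentially statement (ii), i.e.\ the thing being proved). Second, absolute continuity is irrelevant here --- the remark immediately following the lemma in the paper notes the statement holds without it. A correct completion of the direct (v)$\Rightarrow$(iii) is a pigeonhole rather than a telescope: with $h(y)=\log f(\mathrm{e}^{-y})$ (nonincreasing, not nondecreasing as you wrote) and window length $s=k^2$, the total decrement of $h$ over $[y_s,y_s+s]$ is $<\log(1/C_0)$, so at least one of the $k$ subwindows of length $k$ has decrement $<\log(1/C_0)/k$; this produces $0<a_k<b_k\le 1$ with $a_k/b_k=\mathrm{e}^{-k}\to 0$ and $f(a_k)/f(b_k)>C_0^{1/k}\to 1$, contradicting (v), using only monotonicity of $f$.

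There is also a simpler repair of your architecture that avoids the hard case entirely: prove (v)$\Rightarrow$(ii) instead. Negating (ii) lets you choose $C_n\to 1$ \emph{and} $t_n\to 0$, so the resulting ratios tend to $1$ automatically. Then note that your (iii)$\Rightarrow$(iv) argument uses only a single pair $(C,t_0)$ and is therefore really a proof of (ii)$\Rightarrow$(iv); combined with the trivial (iv)$\Rightarrow$(iii)$\Rightarrow$(ii) and your (ii)$\Rightarrow$(i) and (i)$\Rightarrow$(v), all five statements are equivalent with no delicate step remaining.
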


\begin{proof}
The proof of equivalences (i) $\Leftrightarrow$ (ii) and  (ii) $\Leftrightarrow$ (iv) $\Leftrightarrow$ (v) can be found in \cite[Proposition 3.11]{BF2} and \cite[Theorem 6]{P}, respectively. To complete the proof it remains to note that the implications (iv) $\Rightarrow$ (iii) $\Rightarrow$ (ii) are obvious.
\end{proof}

\begin{remark}
Lemma \ref{lem:osv} remains true without the assumption that $f$ is absolutely continuous
and strictly increasing (cf. \cite[Theorem 3.1]{sP2} and \cite[Lemma~2]{Rog}).
Note that a first application of condition (v) to the Riesz basis property problem was given in \cite{AP}.
\end{remark}

Parfenov's condition (API+) is clearly equivalent
to condition (ii) in Lemma \ref{lem:osv} with $f = I^+$.
This allows us to reformulate Theorem \ref{th:parfenov}
using the concept of positively increasing functions.

\begin{corollary}\label{cor:pi}
Let $r\in L^1[-1,1]$ and $xr(x)>0$ for a.a. $x \in [-1,1]$.
\begin{itemize}
\item[(i)]
If $I^+$ is positively increasing,
then problem \eqref{separated:-1,1} has the Riesz basis property.
\item[(ii)] If $r$ is odd, then problem \eqref{separated:-1,1} has the Riesz basis property if and only if the function $I^+$ is positively increasing.
\end{itemize}
\end{corollary}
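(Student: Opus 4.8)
The plan is to derive Corollary~\ref{cor:pi} directly from Theorem~\ref{th:parfenov} and Lemma~\ref{lem:osv}, so essentially no new analysis is needed; the only work is translating the hypotheses into the language of positively increasing functions and checking that Lemma~\ref{lem:osv} applies to $I^+$. First I would observe that the function $f=I^+$ as defined in \eqref{eq:i_pm_no_x} satisfies the standing hypotheses of Lemma~\ref{lem:osv}: since $r\in L^1[-1,1]$ and $xr(x)>0$ a.e., the function $x\mapsto\int_0^x|r|\,dt$ is absolutely continuous on $[0,1]$, nondecreasing, vanishes at $0$, and has derivative $|r|>0$ a.e. on $[0,1]$. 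Hence all five conditions in Lemma~\ref{lem:osv} are equivalent for $I^+$. In particular, ``$I^+$ is positively increasing'' is equivalent to statement (ii) of that lemma, namely the existence of $C\in(0,1)$ and $t\in(0,1)$ with $I^+(xt)\le C\,I^+(x)$ for all $x\in(0,1)$; and by the equivalence (ii)$\Leftrightarrow$(iii), also equivalent to the same inequality with $C=\tfrac12$, which is exactly Parfenov's condition {\rm (API+)}.

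For part (i): assuming $I^+$ is positively increasing, the equivalence just noted gives condition {\rm (API+)} of Theorem~\ref{th:parfenov}, and then Theorem~\ref{th:parfenov}(i) yields the Riesz basis property of \eqref{separated:-1,1}. For part (ii): when $r$ is odd we have $I^-(x)=I^+(x)$ and $I(x)=2I^+(x)$ for all $x\in[0,1]$. Theorem~\ref{th:parfenov}(iii) asserts that in the odd case {\rm (APsuf)}, {\rm (APnec)}, {\rm (API+)} are all equivalent to the Riesz basis property of \eqref{separated:-1,1}. Combining this with the equivalence {\rm (API+)}$\Leftrightarrow$``$I^+$ positively increasing'' established above gives exactly the stated biconditional: \eqref{separated:-1,1} has the Riesz basis property if and only if $I^+$ is positively increasing.

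There is essentially no obstacle here; the statement is a reformulation. The one point requiring a word of care is verifying that $I^+$ meets the regularity hypotheses of Lemma~\ref{lem:osv} (absolute continuity and a.e.-positive derivative on the closed interval $[0,1]$), which follows from $r\in L^1$ and $xr(x)>0$ a.e.; alternatively one may invoke the remark after Lemma~\ref{lem:osv} that those regularity assumptions are not actually needed, which makes the reduction immediate. I would also remark, as the text already does just before the corollary, that (API+) is literally condition (ii) of Lemma~\ref{lem:osv} with $f=I^+$, so the bridge between the two results is direct.
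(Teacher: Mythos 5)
Your proposal is correct and follows essentially the same route as the paper: the text immediately preceding the corollary observes that (API+) is condition (ii) of Lemma~\ref{lem:osv} with $f=I^+$ (so that, by that lemma, it is equivalent to $I^+$ being positively increasing), and the corollary is then read off from Theorem~\ref{th:parfenov}(i) and (iii). Your extra check that $I^+$ satisfies the regularity hypotheses of Lemma~\ref{lem:osv} is a reasonable bit of care that the paper leaves implicit.
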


\begin{remark}\label{rem:3.2}
\begin{enumerate}
\item[(i)] Corollary~\ref{cor:pi} holds true if $I^+$ is replaced by  $I^-$.
\item[(ii)] It is clear that the property of $I^+$ to be a positively increasing function
depends only on the local behavior of $r$ near $0$.
Therefore, without loss of generality
in Theorem~\ref{th:parfenov} one can
reformulate the conditions (API+) and, if $r$ is odd, also (APsuf), (APnec)
on an arbitrary small subinterval $x\in (0,\varepsilon)$, $0 < \varepsilon \le 1$.
\end{enumerate}
\end{remark}

\subsection{Extensions of Parfenov's Theorem}\label{ss:odddom}

It this subsection we show that Parfenov's result from Theorem~\ref{th:parfenov}(iii) (or Corollary~\ref{cor:pi}(ii))
carries over to classes of non-odd weight functions which allow
certain types of odd-domination (cf. \cite{F5, BF, BF2}).

\subsubsection{A known extension to strongly odd-dominated weights}\label{ss:odddom_known}
For a function $r\in L^1[-1,1]$
we define its even and odd part by
\begin{equation}\label{eq:oddpart}
r^e(x)=\frac{r(x)+r(-x)}{2},\quad r^o(x)=\frac{r(x)-r(-x)}{2},\quad x\in[-1,1].
\end{equation}
The sign condition $xr(x)>0$ implies $|r^e(x)|< r^o(x)$ for a.a. $x\in(0,1)$.
\begin{definition}[\cite{BF, BF2}]\label{def:odd}
A function $r\in L^1[-1,1]$ with $xr(x)>0$ for a.a. $x \in [-1,1]$
is called {\em weakly odd-dominated}
if there exists a function $\rho:[0,1]\to[0,1)$ such that
\begin{equation}\label{eq:odd}
\int_0^x|r^e|dt\le \rho(\varepsilon)\int_0^x r^odt,\quad x\in[0,\varepsilon], \quad \varepsilon \in [0,1].
\end{equation}
If $\rho$ satisfies
\[
 \rho(\varepsilon)=o(1),\quad \varepsilon\to 0,
\]
then $r$ is called {\em odd-dominated}.
If additionally
\[
 \rho(\varepsilon)=o(\varepsilon^{1/2}),\quad \varepsilon\to 0,
\]
then $r$ is called {\em strongly odd-dominated}.
\end{definition}

Clearly, each odd function is strongly odd-dominated.
The next theorem combines results from \cite[Theorem 4.2, Theorem 4.3]{BF} and \cite[Proposition 3.11, Theorem 3.13]{BF2}.
These results are now formulated in terms of positively increasing functions.

\begin{theorem}[\cite{BF,BF2}]\label{thm:odddom}
Let $r\in L^1[-1,1]$ and $xr(x)>0$ for a.a. $x\in [-1,1]$.
\begin{itemize}
\item[(i)] If $r$ is odd-dominated, then the conditions {\rm (API+)}
and  {\rm (APsuf)} are equivalent
and each of them holds true if and only if
the function $I^+$ is positively increasing.
\item[(ii)] If  $r$ is strongly odd-dominated, then
problem \eqref{separated:-1,1} has the Riesz basis property
if and only if the function $I^+$ is positively increasing.
\end{itemize}
\end{theorem}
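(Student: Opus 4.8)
The plan is to reduce Theorem~\ref{thm:odddom} to Parfenov's Theorem~\ref{th:parfenov} (equivalently Corollary~\ref{cor:pi}) by comparing the given weight $r$ with the associated odd weight $r^o$, exploiting the odd-domination hypothesis to control the even part. Write $R^+(x):=\int_0^x r^o\,dt$ for the quantity built from the odd part, and $I^\pm(x),I(x)$ as in \eqref{eq:i_pm_no_x} for $r$ itself. The key elementary observation is that the sign condition $xr(x)>0$ gives $r^o>0$ a.e. on $(0,1)$ and $|r^e|<r^o$, so that $I^+(x)=\int_0^x(r^o+r^e)\,dt$ and $I^-(x)=\int_0^x(r^o-r^e)\,dt$, whence $I(x)=2R^+(x)$ and, when $r$ is weakly odd-dominated with function $\rho$,
\begin{equation}\notag
\bigl(1-\rho(\varepsilon)\bigr)R^+(x)\le I^\pm(x)\le\bigl(1+\rho(\varepsilon)\bigr)R^+(x),\qquad x\in[0,\varepsilon].
\end{equation}
In particular, as soon as $\varepsilon$ is small enough that $\rho(\varepsilon)<1$, the three functions $I^+$, $I^-$ and $R^+$ are comparable up to multiplicative constants on $[0,\varepsilon]$, and therefore — since being positively increasing depends only on local behavior near $0$ and is invariant under multiplication by functions bounded above and below by positive constants — $I^+$ is positively increasing if and only if $R^+$ is.

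For part~(i), the first step is to verify that \eqref{eq:osv}/condition~(ii) of Lemma~\ref{lem:osv} transfers between $I^+$ and $R^+$ using the two-sided bound above; condition (API+) for $r$ is literally condition (ii) of Lemma~\ref{lem:osv} with $f=I^+$, so (API+) $\Leftrightarrow$ ``$R^+$ positively increasing''. Next I would show (API+) $\Rightarrow$ (APsuf): if $R^+$ is positively increasing then by Lemma~\ref{lem:osv}(iv) there are $C,\beta>0$ with $R^+(xt)\le Ct^\beta R^+(x)$; feeding this into the two-sided bound and using $\min\{I^-(t),I^+(t)\}\le(1+\rho(\varepsilon))R^+(t)$ and $I(x)=2R^+(x)\ge\cdots$, one gets exactly an estimate of the form $\min\{I^-(t),I^+(t)\}\le C'(t/x)^\beta I(x)$ for $0<t\le x\le\varepsilon$, and the passage from $(0,\varepsilon)$ to $(0,1)$ is harmless by Remark~\ref{rem:3.2}(ii). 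The reverse implication (APsuf) $\Rightarrow$ (API+) is the general fact that (APsuf) implies the Riesz basis property (Theorem~\ref{th:parfenov}(i)), hence implies (APnec) by Theorem~\ref{th:parfenov}(ii), and one then reads off that $R^+$ (hence $I^+$) is positively increasing from (APnec) combined with the comparison — this is essentially the argument behind Theorem~\ref{th:parfenov}(iii) for odd weights, now applied to the odd weight $r^o$ after transferring the conditions.

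For part~(ii), the strategy is to apply Theorem~\ref{th:parfenov}(iii) (or Corollary~\ref{cor:pi}(ii)) to the odd weight $r^o$: problem \eqref{separated:-1,1} with weight $r^o$ has the Riesz basis property iff $R^+$ is positively increasing iff (by part~(i) or the comparison) $I^+$ is positively increasing. It then remains to show that, when $r$ is \emph{strongly} odd-dominated, the Riesz basis property for the weight $r$ is equivalent to the Riesz basis property for the weight $r^o$. The natural tool is a perturbation/stability result for regular critical points: writing $r=r^o+r^e$, one wants the perturbation $r^e$ to be small enough — in the appropriate operator- or form-theoretic sense — not to destroy regularity of the critical point at infinity. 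Here the extra gap $\rho(\varepsilon)=o(\varepsilon^{1/2})$ is exactly what is needed: it controls $\int_0^x|r^e|\,dt$ by $o(\varepsilon^{1/2})R^+(x)$, which is the borderline decay rate that makes the even perturbation relatively form-bounded with relative bound controlling the critical point (this is the mechanism in \cite{BF,BF2}). I expect this last step — the quantitative perturbation argument showing that strong odd-domination preserves regularity of the critical point at infinity — to be the main obstacle; the comparison of $I^\pm$ with $R^+$ and the transfer of the positively-increasing property are routine once the bound $(1-\rho)R^+\le I^\pm\le(1+\rho)R^+$ is in hand.
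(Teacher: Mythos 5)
This theorem is not proved in the paper at all: it is imported verbatim from \cite[Theorems 4.2, 4.3]{BF} and \cite[Proposition 3.11, Theorem 3.13]{BF2}, so any argument you give is necessarily a ``different route.'' Your part~(i) is a sound reconstruction from the ingredients the paper does supply: the identity $I(x)=2\int_0^x r^o\,dt$ together with the two-sided bound $(1-\rho(\varepsilon))R^+\le I^\pm\le(1+\rho(\varepsilon))R^+$ is exactly Lemma~\ref{lem:r_ro}(i), and the transfer of the positively increasing property across such a comparison is legitimate via Lemma~\ref{lem:osv}(iii) (submultiplicativity of $S_f$). Two remarks: (APsuf) follows from (API+) even more cheaply than you indicate, since $\min\{I^-(t),I^+(t)\}\le I^+(t)$ and $I^+\le I$, so Lemma~\ref{lem:osv}(iv) applied to $I^+$ already gives it with no odd-domination at all. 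More importantly, in your step ``(APnec) plus the comparison gives $R^+$ positively increasing'' the global constants $1\pm\rho(1)$ are \emph{not} enough: one only gets $R^+(xt)\le \frac{4C}{(1-\rho(1))^2}R^+(x)$, and $4C<1$ does not force this constant below $1$. You must localize to $(0,\varepsilon)$ with $\rho(\varepsilon)<1-2\sqrt{C}$, which is precisely where odd-domination (as opposed to weak odd-domination) is used; the paper makes this very step the content of its new Theorem~\ref{lem:odddom}, and Example~\ref{ex:wod_not_stable} shows it genuinely fails for merely weakly odd-dominated weights. Make that quantitative point explicit and part~(i) is complete.

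Part~(ii) as written has a genuine gap: the entire ``only if'' direction rests on the claim that for strongly odd-dominated $r$ the Riesz basis property transfers between $r$ and $r^o$, which you defer to an unspecified perturbation/stability principle (``relatively form-bounded with relative bound controlling the critical point''). No such stability theorem is available in the paper, your description of the mechanism is speculative, and the exponent $1/2$ in $\rho(\varepsilon)=o(\varepsilon^{1/2})$ actually enters in \cite{BF,BF2} through Pyatkov-type interpolation estimates rather than form-boundedness. The gap is, however, avoidable with machinery you have already set up: the Riesz basis property implies (APnec) by Theorem~\ref{th:parfenov}(ii), which by your (corrected) part~(i) argument implies (API+), hence $I^+$ is positively increasing; the converse is Theorem~\ref{th:parfenov}(i). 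This proves part~(ii) with no perturbation argument and under the weaker hypothesis of odd-domination --- which is exactly the paper's Theorem~\ref{lem:odddom}; the strong odd-domination hypothesis in the cited statement is an artifact of the original proof in \cite{BF2}, not something your argument should need.
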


\subsubsection{A further extension to odd-dominated weights}\label{ss:odddom_unknown}

Next we extend Theorem~\ref{thm:odddom}~(ii) to the case
of odd-dominated functions. First, we observe the following estimate.
\begin{lemma}\label{lem:r_ro}
Put $I^+_o(x) := \int_{0}^{x}r^o \, dt$
and let $I^\pm(x)$ be given as in \eqref{eq:i_pm_no_x}.
Assume that $r$ is weakly odd-dominated
and let $\rho$ be as in Definition~{\rm\ref{def:odd}}.
\begin{itemize}
 \item[(i)] For all $\varepsilon \in [0,1]$ and $x \in [0,\varepsilon]$ we have
\begin{equation}\label{I_Io}
(1-\rho(\varepsilon))I^+_o(x)
\le I^\pm(x)
\le (1+\rho(\varepsilon))I^+_o(x).
\end{equation}
 \item[(ii)] The function $I^+$ satisfies {\rm (API+)} if and only if $I^+_o$ satisfies {\rm (API+)}.
\end{itemize}
\end{lemma}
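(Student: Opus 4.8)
The plan is to derive part (i) directly from the definition of weak odd-domination and then deduce part (ii) as a formal consequence of the two-sided comparison in \eqref{I_Io}. First I would observe that $I^\pm(x)=\int_0^x |r(\pm t)|\,dt$, and since $r(\pm t)=r^e(t)\pm r^o(t)$ with $r^o>0$ a.e. on $(0,1)$ (by the sign condition $xr(x)>0$, which forces $|r^e|<r^o$, as noted just before Definition~\ref{def:odd}), we have $r(\pm t)=r^o(t)\pm r^e(t)>0$ a.e., so the absolute values can be dropped: $I^\pm(x)=\int_0^x\bigl(r^o(t)\pm r^e(t)\bigr)dt = I^+_o(x) \pm \int_0^x r^e\,dt$. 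The triangle inequality then gives $\bigl|I^\pm(x)-I^+_o(x)\bigr|=\bigl|\int_0^x r^e\,dt\bigr|\le \int_0^x |r^e|\,dt$, and invoking \eqref{eq:odd} with the given $\varepsilon$ bounds the right-hand side by $\rho(\varepsilon)\int_0^x r^o\,dt=\rho(\varepsilon)I^+_o(x)$ for $x\in[0,\varepsilon]$. Rearranging yields exactly the two inequalities in \eqref{I_Io}. (Here I use that $\rho(\varepsilon)<1$, so $1-\rho(\varepsilon)>0$ and the lower bound is meaningful.)

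For part (ii), I would fix some small $\varepsilon_0\in(0,1)$ with $\rho(\varepsilon_0)<1$ — such $\varepsilon_0$ exists since $\rho$ maps into $[0,1)$ — and work on $(0,\varepsilon_0)$, which is legitimate because, by Remark~\ref{rem:3.2}(ii), condition (API+) for either $I^+$ or $I^+_o$ depends only on the local behavior near $0$ and may be checked on an arbitrarily small subinterval. On $(0,\varepsilon_0)$ the estimate \eqref{I_Io} (applied with $\varepsilon=\varepsilon_0$, and noting $I^+(x)=I^+(x)$ is one of the two instances $I^\pm$, namely $I^+$) reads
\begin{equation}\notag
(1-\rho(\varepsilon_0))\,I^+_o(x)\le I^+(x)\le (1+\rho(\varepsilon_0))\,I^+_o(x),\qquad x\in(0,\varepsilon_0).
\end{equation}
Thus $I^+$ and $I^+_o$ are comparable up to the fixed multiplicative constants $\kappa_1:=1-\rho(\varepsilon_0)>0$ and $\kappa_2:=1+\rho(\varepsilon_0)$. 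Now suppose $I^+_o$ satisfies (API+) on $(0,\varepsilon_0)$: there is $t\in(0,1)$ with $I^+_o(xt)\le\tfrac12 I^+_o(x)$ for all $x\in(0,\varepsilon_0)$. One cannot immediately conclude $I^+(xt)\le\tfrac12 I^+(x)$ because the constant $\tfrac12$ would degrade to $\kappa_2/(2\kappa_1)$, which may exceed $\tfrac12$. The fix is to iterate: for $k\in\dN$, applying the hypothesis $k$ times gives $I^+_o(xt^k)\le 2^{-k}I^+_o(x)$, hence $I^+(xt^k)\le\kappa_2 I^+_o(xt^k)\le \kappa_2 2^{-k}I^+_o(x)\le (\kappa_2/\kappa_1)2^{-k}I^+(x)$; choosing $k$ large enough that $(\kappa_2/\kappa_1)2^{-k}\le\tfrac12$ and setting $t':=t^k\in(0,1)$ shows $I^+$ satisfies (API+). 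The reverse implication is symmetric, with the roles of $I^+$ and $I^+_o$ interchanged, using the same two-sided bound.

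The only mild subtlety — the ``main obstacle'' — is the constant-degradation issue just described: comparability up to constants does not preserve the sharp factor $\tfrac12$ appearing in (API+), so one must pass through the iteration (equivalently, one could instead invoke Lemma~\ref{lem:osv}, whose equivalence (ii)$\Leftrightarrow$(iii) already encodes that the constant in (API+)-type conditions is irrelevant, and note that $I^+$ and $I^+_o$ satisfy the absolute-continuity and positive-derivative hypotheses of that lemma on $(0,\varepsilon_0)$). Everything else is a direct computation from the definitions.
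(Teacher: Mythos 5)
Your proof is correct and takes essentially the same route as the paper: part (i) is the same pointwise bound $r^o-|r^e|\le |r(\pm t)|\le r^o+|r^e|$ integrated against \eqref{eq:odd}. For part (ii) the paper handles the constant-degradation obstacle you identify by citing Lemma~\ref{lem:osv}(iii) to replace $\tfrac12$ by $\frac{1-\rho(1)}{2(1+\rho(1))}$ before transferring the inequality (working directly with $\varepsilon=1$, since $\rho(1)<1$ by definition, so no localization to $(0,\varepsilon_0)$ is needed); your iteration $t\mapsto t^k$ is an equivalent, self-contained way of doing the same thing, and you note the lemma-based alternative yourself.
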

\begin{proof}
(i) Notice that
$r^o(x) - |r^e(x)| \leq |r(\pm x)| \leq r^o(x) + |r^e(x)|$ for a.a. $x > 0$.
Therefore, estimate \eqref{I_Io} follows from \eqref{eq:odd}
for $I^+(x)$ as well as for  for $I^-(x)$.

(ii) Assume that $I^+$ satisfies (API+).
Then there is a $t \in (0,1)$ such that with $C:=1/2$
\[
I^+(xt)\le CI^+(x),\quad x\in(0,1).
\]
By Lemma \ref{lem:osv} (iii),
we can also choose $C = \frac{1-\rho(1)}{2(1+\rho(1))} \, (\in (0,1))$.
Then \eqref{I_Io} implies
\[
I^+_o(xt) \leq \frac{1}{1-\rho(1)}I^+(xt)
\leq \frac{C}{1-\rho(1)}I^+(x)
\leq C \frac{1+\rho(1)}{1-\rho(1)}I^+_o(x)
= \frac{1}{2}I^+_o(x)
\]
for all $x \in (0,1)$.
This proves (API+) for $I^+_o$.
The converse follows similarly.
\end{proof}

\begin{theorem}\label{lem:odddom}
Let $r\in L^1[-1,1]$ and $xr(x)>0$ for a.a. $x \in [-1,1]$
and assume that $r$ is odd-dominated.
Then the conditions {\rm (API+)},
{\rm (APsuf)} and {\rm (APnec)} are equivalent
and further equivalent to each of the following statements
(which are then also equivalent):
\begin{itemize}
 \item[(i)] Problem \eqref{separated:-1,1} has the Riesz basis property.
\item[(ii)] The function $I^+$ is positively increasing.
\end{itemize}
\end{theorem}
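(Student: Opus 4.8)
The plan is to deduce Theorem~\ref{lem:odddom} from the known results by reducing the odd-dominated case to the odd case via the comparison between $I^{\pm}$ and $I^+_o$ established in Lemma~\ref{lem:r_ro}, together with a trick that promotes ``odd-dominated'' to ``strongly odd-dominated'' after passing to a suitable subinterval and rescaling. The logical target is the chain of equivalences $\text{(APsuf)}\Leftrightarrow\text{(API+)}\Leftrightarrow\text{(ii)}\Leftrightarrow\text{(i)}\Rightarrow\text{(APnec)}$, and finally $\text{(APnec)}\Rightarrow\text{(ii)}$ to close the loop.

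First I would dispose of the easy part. Since $r$ is odd-dominated it is in particular weakly odd-dominated, so Theorem~\ref{thm:odddom}(i) applies verbatim: $\text{(API+)}\Leftrightarrow\text{(APsuf)}$ and each is equivalent to $I^+$ being positively increasing, i.e.\ to (ii). Also, Corollary~\ref{cor:pi}(i) (equivalently the (API+)-sufficiency clause of Theorem~\ref{th:parfenov}(i)) gives $\text{(ii)}\Rightarrow\text{(i)}$ immediately, and Theorem~\ref{th:parfenov}(ii) gives $\text{(i)}\Rightarrow\text{(APnec)}$ for free. So the only genuine implication to prove is $\text{(APnec)}\Rightarrow\text{(ii)}$ (or, equivalently, $\text{(i)}\Rightarrow\text{(ii)}$); everything else is bookkeeping. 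I would state the bookkeeping in one sentence and then concentrate on that one arrow.

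For $\text{(APnec)}\Rightarrow\text{(ii)}$ the strategy is to compare the indefinite problem with an \emph{odd} one. By Lemma~\ref{lem:r_ro}(i), on any interval $[0,\varepsilon]$ we have $(1-\rho(\varepsilon))I^+_o(x)\le I^{\pm}(x)\le(1+\rho(\varepsilon))I^+_o(x)$, and $\rho(\varepsilon)\to 0$. Hence $I(x)=I^+(x)+I^-(x)$ is comparable, up to the factor $2(1\pm\rho(\varepsilon))$, to $2I^+_o(x)$, while $I^-(x)I^+(x)\le (1+\rho(\varepsilon))^2 I^+_o(x)^2$. Substituting these two-sided bounds into the (APnec) inequality $I^-(xt)I^+(xt)\le C\,I(xt)I(x)$ and using the lower bound $I^-(xt)I^+(xt)\ge (1-\rho(\varepsilon))^2 I^+_o(xt)^2$ on the left, one obtains, after absorbing the $\rho$-factors, an inequality of the form $I^+_o(xt)^2\le C'\,I^+_o(xt)\,I^+_o(x)$ for $x$ in a small interval $(0,\varepsilon)$ with a constant $C'$ that, by choosing $\varepsilon$ small enough, can be taken to be $<1/4\cdot 4=1$; dividing by $I^+_o(xt)$ gives $I^+_o(xt)\le C'' I^+_o(x)$ with $C''<1$. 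This is condition (ii) of Lemma~\ref{lem:osv} applied to $f=I^+_o$ on $(0,\varepsilon)$ (note $I^+_o\in AC$ with $I^{+\prime}_o=r^o>|r^e|\ge 0$ positive a.e., so the hypotheses of Lemma~\ref{lem:osv} hold), hence $I^+_o$ is positively increasing on $(0,\varepsilon)$, hence on $(0,1)$ since the property is local (Remark~\ref{rem:3.2}(ii)); and then Lemma~\ref{lem:r_ro}(ii) transfers (API+) back from $I^+_o$ to $I^+$, which by Lemma~\ref{lem:osv} is exactly (ii).

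The main obstacle is the careful arithmetic in the previous paragraph: one must verify that the factors $(1\pm\rho(\varepsilon))$ really can be driven close enough to $1$ so that the constant $C\in(0,1/4)$ from (APnec) turns into a constant strictly below $1$ after dividing through, \emph{and} one must be sure that passing to the small interval $(0,\varepsilon)$ does not lose information — this is where Remark~\ref{rem:3.2}(ii) and the locality of all the conditions (which in turn rests on the ``localization'' half of Parfenov's theorem, or more simply on Lemma~\ref{lem:osv}) are essential. A minor subtlety is that (APnec) is stated for all $x\in(0,1)$ with a single $t\in(0,1]$; when $t=1$ the inequality degenerates and one should instead invoke the equivalent reformulations in Lemma~\ref{lem:osv}, but since the target (ii) only requires \emph{some} $t\in(0,1)$ this causes no trouble once $C<1/4$ forces $t\neq 1$ in a nondegenerate way. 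Apart from this, the proof is a short assembly of the quoted lemmas, so I would keep it to a paragraph.
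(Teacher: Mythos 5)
Your proposal is correct and follows essentially the same route as the paper: reduce everything to the single implication (APnec)$\Rightarrow$(API+) via Theorems~\ref{th:parfenov} and~\ref{thm:odddom}, then prove it by sandwiching $I^{\pm}$ between $(1\mp\rho)I^+_o$ (Lemma~\ref{lem:r_ro}(i)), feeding these bounds into (APnec) to get $I^+_o(xt)\le \tilde C I^+_o(x)$ with $\tilde C=4C(1+\varepsilon)^2/(1-\varepsilon)^2<1$ for small $\varepsilon$, and transferring back to $I^+$ via Lemma~\ref{lem:osv} and Lemma~\ref{lem:r_ro}(ii). Your explicit restriction to a small interval $(0,\varepsilon)$ together with Remark~\ref{rem:3.2}(ii) is, if anything, slightly more careful than the paper's own write-up on the point of where the bound $I(x)\le 2(1+\varepsilon)I^+_o(x)$ is actually available.
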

\begin{proof}
By Theorems~\ref{th:parfenov} and~\ref{thm:odddom},
only the implication (APnec)$\Rightarrow$(API+)
remains to be shown.
Assume that $r$ satisfies (APnec) with some $C<1/4$ and $t\in (0, 1]$.
Since $r$ is odd-dominated, for an arbitrary $\varepsilon\in (0,1)$,
there exists $t=t(\varepsilon)\in (0,1)$ such that $\rho(t)<\varepsilon$.
Then Lemma~\ref{lem:r_ro}(i) imlies
\[
I_o^+(xt) \leq \frac{I^\pm(xt)}{1-\varepsilon}, \quad
I(xt) = I^+(xt) + I^-(xt) \leq 2 (1 + \varepsilon) I_o^+(xt),\quad x\in(0,1),
\]
and hence, using (APnec) we obtain
\[
I_o^+(xt)^2
\leq \frac{I^+(xt) I^-(xt)}{(1-\varepsilon)^2}
\leq \frac{C}{(1-\varepsilon)^2} I(xt) I(x)
\leq \tilde{C} I_o^+(xt) I_o^+(x),
\]
for all $x\in(0,1)$. Here
\[
 \tilde{C}(\varepsilon)=4C \frac{(1+\varepsilon)^2}{(1-\varepsilon)^2}.
\]
 Since $4C<1$, we can find a sufficiently small $\varepsilon_0>0$ such that
$\tilde{C}_0=\tilde{C}(\varepsilon_0) 
<1$ and 
\[
I_o^+(xt) \leq \tilde{C}_0 I_o^+(x),\quad x\in (0,1).
\]
By Lemma \ref{lem:osv} and Remark \ref{rem:3.2} (ii),
$r_o$ satisfies  (API+). It remains to apply Lemma \ref{lem:r_ro} (ii).
\end{proof}

Note again that a similar statement holds true for $I^-$ instead of $I^+$.
We shall see below (Example \ref{ex:wod_not_stable})
that a further generalization of Theorem \ref{thm:odddom}
to a weakly odd-dominated setting is not possible.
Now, Theorem \ref{lem:odddom} together with Lemma \ref{lem:r_ro} (ii)
implies the following generalization of \cite[Theorem 4.4]{BF2}
from strongly odd-dominated weights to odd-dominated weights.

\begin{corollary}\label{cor:r_ro}
Assume that $r$ is odd-dominated and $r^o$ is its odd part \eqref{eq:oddpart}.
Then, the  Riesz basis property of \eqref{separated:-1,1}
is equivalent to the  Riesz basis property in $L_{|r^o|}^2[-1,1]$
of the eigenvalue problem
\begin{equation}\label{eq:star}
-f'' = {\lambda} r^o f
\quad \mbox{on} \quad [-1,1], \qquad
f(-1) = f(1) = 0.
\end{equation}
\end{corollary}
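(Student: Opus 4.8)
The plan is to derive Corollary~\ref{cor:r_ro} directly from Theorem~\ref{lem:odddom} and Lemma~\ref{lem:r_ro}(ii) by comparing the two eigenvalue problems \eqref{separated:-1,1} and \eqref{eq:star} through the Parfenov-type conditions. The key observation is that the odd part $r^o$ of $r$ is itself an odd weight with $x\,r^o(x)>0$ for a.a.\ $x\in[-1,1]$ (this sign condition follows from $|r^e(x)|<r^o(x)$, which is recorded just before Definition~\ref{def:odd}), so Theorem~\ref{th:parfenov}(iii) (equivalently Corollary~\ref{cor:pi}(ii)) applies to \eqref{eq:star}: that problem has the Riesz basis property in $L^2_{|r^o|}[-1,1]$ if and only if the function $x\mapsto\int_0^x r^o\,dt$ is positively increasing.

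The next step is to identify the integral functions involved. For the problem \eqref{eq:star} the relevant function is precisely $I^+_o(x)=\int_0^x r^o\,dt=\int_0^x|r^o|\,dt$ (using $r^o>0$ on $(0,1)$), which plays for $r^o$ exactly the role that $I^+$ plays for $r$. By Lemma~\ref{lem:osv}, $I^+_o$ is positively increasing if and only if it satisfies (API+), and likewise $I^+$ is positively increasing if and only if it satisfies (API+). Hence I would argue: \eqref{eq:star} has the Riesz basis property $\iff$ $I^+_o$ is positively increasing $\iff$ $I^+_o$ satisfies (API+) $\iff$ (by Lemma~\ref{lem:r_ro}(ii)) $I^+$ satisfies (API+) $\iff$ $I^+$ is positively increasing $\iff$ (by Theorem~\ref{lem:odddom}, since $r$ is odd-dominated) \eqref{separated:-1,1} has the Riesz basis property. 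Chaining these equivalences gives the claim.

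One point that needs a word of care is that Lemma~\ref{lem:osv} as stated carries the hypothesis that the function be absolutely continuous with a.e.\ positive derivative; for $I^+$ and $I^+_o$ this holds because $r$ (hence $r^o$) is positive a.e.\ on $(0,1)$ in the relevant sense — more precisely $r^o(x)>|r^e(x)|\ge 0$ — so both functions are in $AC[0,1]$ with positive derivative a.e., and the Remark following Lemma~\ref{lem:osv} in any case removes the assumption. Similarly one should note, as the paper does after Theorem~\ref{lem:odddom}, that everything may equally be phrased with $I^-$ and $I^-_o$ in place of $I^+$ and $I^+_o$.

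I do not expect a serious obstacle here: the corollary is a bookkeeping consequence of the machinery already assembled, the only genuine inputs being Theorem~\ref{lem:odddom} (which upgrades Parfenov's equivalence to odd-dominated $r$) and Lemma~\ref{lem:r_ro}(ii) (which transfers (API+) between $I^+$ and $I^+_o$). The mild subtlety — if any — is simply making sure the hypotheses of Lemma~\ref{lem:osv} are legitimately met for $I^+_o$, i.e.\ that $r^o$ inherits enough positivity and integrability from $r$, which it does. Thus the proof is essentially a short chain of ``if and only if'' steps with Theorem~\ref{lem:odddom} supplying the top link and Lemma~\ref{lem:r_ro}(ii) the bottom one.
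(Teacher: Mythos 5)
Your proof is correct and follows essentially the same route the paper indicates: the corollary is obtained by chaining Theorem~\ref{lem:odddom} (for the odd-dominated weight $r$), Lemma~\ref{lem:r_ro}(ii) (to transfer (API+) between $I^+$ and $I^+_o$), and Parfenov's criterion for the odd weight $r^o$. Your attention to the hypotheses of Lemma~\ref{lem:osv} and to the sign condition $x\,r^o(x)>0$ is appropriate and introduces no gap.
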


\begin{remark}\label{rem:r_ro_1}
Provided that $r$ is weakly odd-dominated, it follows from  Lemma~\ref{lem:r_ro} and Theorem~\ref{th:parfenov} that if \eqref{eq:star} has the Riesz basis property, then \eqref{separated:-1,1} also has
the Riesz basis property. Now, by Corollary~\ref{cor:r_ro}, the converse is true if $r$ is odd-dominated. However, Example~\ref{ex:wod_not_stable} below shows that the converse is no longer true for weakly odd-dominated weights.
\end{remark}

\subsection{Some consequences of Volkmer's sufficient condition}
In this subsection we derive some consequences of Volkmer's sufficient condition for the Riesz basis property.

The next sufficient condition was originally formulated in \cite[Corollary~2.7]{V} for $r \in L^\infty[-1,1]$. For an unbounded generalization we refer to \cite[Theorem 4.2]{BC}.

\begin{theorem}[Volkmer]\label{thm:volkmer}
Let $r\in L^1[-1,1]$ and let also $xr(x)>0$ for a.a. $x \in [-1,1]$.
Assume that for some $t \in \dR\setminus\{0\}$ and $0 < \varepsilon < \min\{1/|t|, 1\}$ the function
\begin{equation} \notag 
 g(x) := \frac{r(x)}{r(tx)},\quad x\in[-\varepsilon,0),
\end{equation}
is continuously differentiable on $[-\varepsilon,0)$ and has a continuously differentiable extension to $[-\varepsilon,0]$ with $g(0) \neq t$.
Then \eqref{separated:-1,1} has the Riesz basis property.
\end{theorem}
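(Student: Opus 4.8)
The plan is to extract from the hypothesis on $g$ a two--sided power estimate for $|r|$ near the turning point, which makes $I^{+}$ or $I^{-}$ a positively increasing function; the Riesz basis property then follows from Corollary~\ref{cor:pi}(i) together with Remark~\ref{rem:3.2}(i). (Equivalently, by Proposition~\ref{ThmCurgusNaiman} this amounts to showing that $\infty$ is a regular critical point of $L^{F}$, which is the viewpoint adopted in \cite{V, BC}; but here the formulation through positively increasing functions is the convenient one.)

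Assume first $t>0$, with $t\neq1$ (the excluded value $t=1$ gives $g\equiv1=t$). If $g(0)\in\{0,+\infty\}$, then $r\in L^{1}$ forces $|r|$, and hence $I^{-}$, to tend to $0$ faster than any power of $s$ as $s\to0^{+}$, so $I^{-}$ is positively increasing. Otherwise $g(0)\in(0,\infty)$, and after replacing $(t,g(x))$ by $(1/t,\,1/g(x/t))$ when $t>1$ we may assume $t\in(0,1)$. For small $x<0$ both $r(x)$ and $r(tx)$ are negative, so $|r(x)|=g(x)\,|r(tx)|$ and, iterating,
\[
|r(x)|=\Big(\prod_{j=0}^{k-1}g(t^{\,j}x)\Big)\,|r(t^{k}x)|,\qquad k\in\dN .
\]
Fix a small $x_{0}<0$. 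Since $g$ is continuously differentiable and positive near $0$ with $g(0)=:\gamma\in(0,\infty)$, we have $|g(t^{\,j}x_{0})-\gamma|=O(t^{\,j})$, hence $\prod_{j=0}^{k-1}g(t^{\,j}x_{0})\asymp\gamma^{k}$ and $|r(t^{k}x_{0})|\asymp\gamma^{-k}$. Summing the contributions of the intervals $(t^{k+1}|x_{0}|,\,t^{k}|x_{0}|]$ gives $I^{-}\!\big(t^{k}|x_{0}|\big)\asymp\sum_{j\ge k}(t/\gamma)^{\,j}$; since $r\in L^{1}$ forces $I^{-}(s)\to0$ as $s\to0^{+}$, this series must converge, so $t/\gamma<1$ (in particular $\gamma>t$ — for $t>0$ the hypothesis $g(0)\neq t$ is therefore needed only to rule out $t=1$). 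Consequently $I^{-}(s)\asymp s^{\beta}$ near $0$ with $\beta=1-\log\gamma/\log t>0$, so $I^{-}$ is comparable to a positive power of $s$ and thus positively increasing. In every case Corollary~\ref{cor:pi}(i) and Remark~\ref{rem:3.2}(i) give the Riesz basis property.

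When $t<0$ this argument breaks down, for then $x<0$ forces $tx>0$ and $g$ relates the behaviour of $r$ on the two sides of the turning point, so the iteration no longer stays on one side of $0$. One still obtains $I^{-}(s)\sim\frac{|\gamma|}{|t|}\,I^{+}(|t|s)$ as $s\to0$ — the two sides are comparable up to a rescaling — but now neither $I^{+}$ nor $I^{-}$ need be positively increasing, and conjugating $r$ to an odd weight by a change of the independent variable does not in general produce a $C^{1}$ diffeomorphism. Hence for $t<0$ I would not argue through Parfenov's conditions but invoke directly Volkmer's Titchmarsh--Weyl-function argument — \cite[Corollary~2.7]{V} for $r\in L^{\infty}[-1,1]$ and its unbounded extension \cite[Theorem~4.2]{BC} — together with Proposition~\ref{ThmCurgusNaiman}. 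Reproducing that argument, in which $g(0)\neq t$ enters as the non-resonance condition controlling the behaviour near $0$ of the solutions of $-f''=\lambda rf$, is the main obstacle.
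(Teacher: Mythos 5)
The paper does not prove this theorem: it is imported verbatim from the literature, with the sentence preceding it pointing to \cite[Corollary~2.7]{V} for $r\in L^\infty$ and to \cite[Theorem~4.2]{BC} for the $L^1$ version, and with the remark that the case $t>0$ also follows from Theorem~\ref{th:parfenov}. Measured against that, your proposal is aligned with the paper: your $t>0$ argument is essentially the derivation the authors allude to (show $I^-$ is comparable to a power of $s$ near $0$, hence positively increasing, then apply Corollary~\ref{cor:pi}(i) and Remark~\ref{rem:3.2}(i)), and for $t<0$ you fall back on exactly the two references the paper cites. Your analysis of why $t<0$ is genuinely different --- $g$ then couples the two sides of the turning point, $I^-(s)\sim\frac{|\gamma|}{|t|}I^+(|t|s)$, and neither side need be positively increasing --- is correct and is precisely the point of the remark after the theorem in the paper.

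That said, as a \emph{proof} the proposal is incomplete where it matters: the case $t<0$ is the only one used later (Corollary~\ref{th:AneqB} invokes the theorem with $t=-B<0$, and through it Proposition~\ref{thm:wod_not_stable} and Example~\ref{ex:wod_not_stable}), and you supply no argument for it, only the citation. There are also small inaccuracies in the $t>0$ case. The subcase $g(0)=+\infty$ cannot occur, since the hypothesis already demands a continuously differentiable (in particular finite-valued) extension of $g$ to $[-\varepsilon,0]$. For $t\in(0,1)$ the subcase $g(0)=0$ is vacuous: the iteration $|r(t^kx_0)|=|r(x_0)|/\prod_{j<k}g(t^jx_0)$ then makes $|r|$ blow up near $0$ faster than $\delta^{-k}$ for every $\delta>0$, contradicting $r\in L^1$ rather than producing rapid decay of $I^-$ (rapid decay occurs only in the reduced picture coming from $t>1$). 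Finally, the inference ``$I^-$ decays faster than any power of $s$, hence is positively increasing'' is false in general (a nondecreasing function with superpolynomial decay can still be constant on long logarithmic stretches and then violates condition (v) of Lemma~\ref{lem:osv}); what saves you is the specific structure $I^-(t^{k+1}|x_0|)/I^-(t^k|x_0|)\to0$ coming from the geometric iteration, which should be stated explicitly.
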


A similar condition was also obtained in \cite[Theorem~3.7]{F2} (for weights $r\in L^1[-1,1]$) but with the restriction $t>0$.
Note that the case $t>0$ also follows from Theorem~\ref{th:parfenov}
since in this case the function $I^-$ is positively increasing.
However, Theorem~\ref{thm:volkmer} will be applied below with $t<0$
and this case is not covered by Theorem~\ref{th:parfenov}
(see remarks after Theorem 6 in \cite{P}).

Let $ r \in L^1[0,1]$ be real and positive, i.e. $r>0$ a.e. on $[0,1]$.
Fix $A,B>0$ and let $b:=\min\{1,B^{-1}\}$. Define the function $\tilde{r}: [-b, b]\to \dR$ by
\begin{equation}\label{eq:scaling}
\tilde{r}(x)= \begin{cases}
r(x), &x\in (0,b),\\
-A\, r(-Bx),& x\in (-b,0)
\end{cases}.
\end{equation}
This function satisfies $x \tilde{r}(x) > 0$ for a.a. $x \in [-b,b]$ and it can be regarded as a ``scaling perturbation'' of the odd extension of $r$.  Now, Theorem~\ref{thm:volkmer} can be used to decide
whether the problem
\begin{equation}\label{eq:sp_tilde}
-f''=\lambda \tilde{r} f\quad \text{on} \quad [-b,b], \qquad
f(-b)=f(b)=0,
\end{equation}
has the Riesz basis property in $L^2_{|\tilde{r}|}[-b,b]$.
This, again, is a particular case of \eqref{separated}.

\begin{corollary}\label{th:AneqB}
Let $A,B>0$ satisfy $A\neq B$.
Let $r\in L^1[0,1]$ be positive on $[0,1]$ and let $\tilde{r}$ be given by \eqref{eq:scaling}.
Then problem \eqref{eq:sp_tilde}
has the Riesz basis property in $L^2_{|\tilde r|}(-b,b)$.
\end{corollary}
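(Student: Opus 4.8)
The plan is to verify the hypotheses of Volkmer's sufficient condition (Theorem~\ref{thm:volkmer}) for the weight $\tilde r$ with an appropriate choice of the parameter $t$. The natural candidate is $t = -B$, because for $x \in [-\varepsilon, 0)$ we have $tx = -Bx > 0$, so $\tilde r(tx) = r(-Bx) = r(Bx)\big|_{\text{arg}>0}$ lies in the region where $\tilde r$ agrees with the original positive function $r$; more precisely $\tilde r(-Bx) = r(-Bx)$ when $-Bx \in (0,b)$, i.e.\ when $x \in (-b/B, 0)$. Meanwhile $\tilde r(x) = -A\, r(-Bx)$ for $x \in (-b, 0)$. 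Hence on the interval $x \in (-\varepsilon, 0)$ with $\varepsilon$ small enough that $-\varepsilon B \in (0,b)$ and $-\varepsilon \in (-b,0)$ (for instance $\varepsilon < \min\{1/B, b, 1\}$, which is consistent with the requirement $0 < \varepsilon < \min\{1/|t|,1\}$ since $|t| = B$), the quotient is
\[
g(x) = \frac{\tilde r(x)}{\tilde r(tx)} = \frac{-A\, r(-Bx)}{r(-Bx)} = -A,
\]
a constant. This is the crux: the ``scaling'' nature of the perturbation is designed precisely so that this ratio collapses to a constant.

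With $g \equiv -A$ on $[-\varepsilon, 0)$, the function $g$ is trivially continuously differentiable on $[-\varepsilon,0)$ and extends to the constant function $-A$ on $[-\varepsilon,0]$, which is $C^1$ with $g(0) = -A$. The condition $g(0) \neq t$ then reads $-A \neq -B$, i.e.\ $A \neq B$, which is exactly the hypothesis of the corollary. Also $t = -B \in \dR \setminus \{0\}$. Thus all hypotheses of Theorem~\ref{thm:volkmer} are satisfied, and we conclude that \eqref{eq:sp_tilde}, being of the form \eqref{separated:-1,1} after the trivial affine change of variables rescaling $[-b,b]$ to $[-1,1]$ (which does not affect the Riesz basis property, cf.\ Subsection~\ref{sub:conclusion} and the preceding discussion), has the Riesz basis property in $L^2_{|\tilde r|}(-b,b)$.

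I do not anticipate a genuine obstacle here; the only points requiring a line of care are (a) checking that one may indeed choose $\varepsilon > 0$ simultaneously small enough for $\tilde r(x)$ and $\tilde r(tx)$ to fall in the ranges $(-b,0)$ and $(0,b)$ respectively, and compatible with the constraint $\varepsilon < \min\{1/|t|,1\} = \min\{1/B, 1\}$; since $b = \min\{1, B^{-1}\} \le 1/B$, any $\varepsilon < b$ works, so there is no conflict; and (b) noting that Theorem~\ref{thm:volkmer} is stated on $[-1,1]$ whereas \eqref{eq:sp_tilde} lives on $[-b,b]$, so one first rescales. The rescaling $x \mapsto x/b$ transforms the eigenvalue problem into one of the same type with weight $x \mapsto \tilde r(bx)$ on $[-1,1]$, and a constant quotient is preserved under this rescaling, so the argument goes through verbatim on $[-1,1]$. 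The case $t < 0$ of Volkmer's theorem is essential here and, as remarked in the text, is not subsumed by Parfenov's Theorem~\ref{th:parfenov}.
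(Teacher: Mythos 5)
Your proof is correct and takes essentially the same route as the paper's: apply Volkmer's Theorem~\ref{thm:volkmer} with $t=-B$, observe that the quotient $g$ collapses to the constant $-A$, and use $A\neq B$ to verify $g(0)=-A\neq -B=t$. Your added care about rescaling $[-b,b]$ to $[-1,1]$ and the admissible range of $\varepsilon$ only makes explicit what the paper's terser argument (which simply sets $\varepsilon=b$) leaves implicit.
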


\begin{proof}
We apply Theorem~\ref{thm:volkmer} with $t=-B$ and $\varepsilon =b$:
\[
g(x) := \frac{\tilde{r}(x)}{\tilde{r}(-Bx)} =  \frac{-A\, r(-Bx)}{r(-Bx)} = -A, \qquad x \in [-b, 0).
\]
Clearly, $g$ satisfies the assumptions of Theorem~\ref{thm:volkmer} and hence the claim follows.
\end{proof}

The following result
shows that Theorem \ref{lem:odddom} and Corollary \ref{cor:r_ro} cannot be extended to a weakly odd-dominated setting. 

\begin{proposition}\label{thm:wod_not_stable}
Let $r^o\in L^1[-1,1]$ be odd.
\begin{itemize}
\item[(i)] If problem \eqref{eq:star} has the Riesz basis property,
then for any even function $r^e\in L^1[-1,1]$ such that the weight $r:=r^o+r^e$ is weakly odd-dominated, the corresponding problem \eqref{separated:-1,1} has the Riesz basis property.
\item[(ii)] If problem \eqref{eq:star} does not have the Riesz basis property,
then there is an even function $r^e\in L^1[-1,1]$ such that the weight $r:=r^o+r^e$ is weakly odd-dominated and the corresponding problem \eqref{separated:-1,1} has the Riesz basis property.
\end{itemize}
\end{proposition}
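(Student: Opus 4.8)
\textbf{Proof proposal for Proposition \ref{thm:wod_not_stable}.}

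The plan is to derive both parts from the scaling construction in \eqref{eq:scaling} together with Corollary~\ref{th:AneqB} and Corollary~\ref{cor:r_ro}. The key observation is that a constant scaling perturbation of an odd weight is always weakly odd-dominated, but its status as \emph{odd}-dominated is controlled precisely by whether the scaling factor on the negative half matches the one on the positive half. Concretely, given $r^o\in L^1[-1,1]$ odd, set $r_+:=r^o\uphar(0,1]$ (positive a.e. after possibly discarding a null set, since $x r^o(x)>0$ a.e.), and for a parameter $A>0$ define $r:=\tilde r$ via \eqref{eq:scaling} with $B=1$, i.e.\ $r(x)=r_+(x)$ for $x\in(0,1)$ and $r(x)=-A\,r_+(-x)$ for $x\in(-1,0)$. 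Then $r^e(x)=\tfrac{1-A}{2}\,\sgn(x)\,r_+(|x|)$ and $r^o$ (the odd part of $r$) equals $\tfrac{1+A}{2}\,\sgn(x)\,r_+(|x|)$, so $\int_0^x|r^e|\,dt=\tfrac{|1-A|}{1+A}\int_0^x r^o\,dt$ for all $x\in[0,1]$; thus $r$ is weakly odd-dominated with constant function $\rho\equiv\frac{|1-A|}{1+A}<1$, but it is \emph{not} odd-dominated when $A\ne 1$ since $\rho$ does not tend to $0$.

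First I would prove (i). Here $r=r^o+r^e$ is \emph{weakly} odd-dominated by hypothesis, and \eqref{eq:star} has the Riesz basis property. By Remark~\ref{rem:r_ro_1} (equivalently, Lemma~\ref{lem:r_ro} together with Theorem~\ref{th:parfenov}), the Riesz basis property of \eqref{eq:star} implies that of \eqref{separated:-1,1} for \emph{any} weakly odd-dominated $r$ with that odd part; this is exactly assertion (i), so nothing further is needed. (In other words part (i) is essentially a restatement of the forward direction already recorded in Remark~\ref{rem:r_ro_1}, and I would simply cite it.)

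The substance is in (ii). Suppose \eqref{eq:star} does \emph{not} have the Riesz basis property. Normalizing as in \S3 we may assume $a=-1$, $b=1$. Apply the construction above with any fixed $A>0$, $A\ne 1$, and $B=1$: then $r=\tilde r$ is weakly odd-dominated (shown above) with odd part $\tfrac{1+A}{2}\sgn(x)r_+(|x|)$, which is a positive multiple of $r^o$; since the Riesz basis property is unaffected by multiplying the weight by a positive constant (rescale $\lambda$), problem \eqref{eq:star} with $\tfrac{1+A}{2}r^o$ in place of $r^o$ also fails the Riesz basis property. On the other hand, Corollary~\ref{th:AneqB} (with $B=1\ne A$, and $b=1$) shows directly that the scaling perturbation $\tilde r$ gives problem \eqref{eq:sp_tilde}, which is precisely \eqref{separated:-1,1} for $r=\tilde r$, the Riesz basis property. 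Hence $r^e:=\tfrac{1-A}{2}\sgn(x)r_+(|x|)\in L^1[-1,1]$ is the desired even function: $r=r^o+r^e$ is weakly odd-dominated, yet \eqref{separated:-1,1} has the Riesz basis property while \eqref{eq:star} does not. This also shows, via Corollary~\ref{cor:r_ro}, that the equivalence there genuinely requires the odd-dominated (not merely weakly odd-dominated) hypothesis, which is the point of the proposition.

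The main obstacle is bookkeeping rather than depth: one must check carefully that the even/odd decomposition of $\tilde r$ is as claimed and that $\rho\equiv\frac{|1-A|}{1+A}$ is an admissible (constant) choice in Definition~\ref{def:odd} — this uses only $x r_+(x)>0$ a.e. and the explicit form \eqref{eq:scaling} — and that the hypothesis on \eqref{eq:star} is invariant under the positive rescaling $r^o\mapsto\frac{1+A}{2}r^o$. Once these are in place, Corollary~\ref{th:AneqB} does all the analytic work. A minor subtlety: if one prefers not to rescale $r^o$, one can instead absorb the factor $\frac{1+A}{2}$ by choosing $A,B$ in \eqref{eq:scaling} so that the resulting odd part is exactly $r^o$ (take $B=1$ and replace $r_+$ by $\frac{2}{1+A}r_+$), but the cleanest route is simply to note scale-invariance of the Riesz basis property.
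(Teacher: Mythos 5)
Your proposal is essentially the paper's own argument: part (i) is cited from Lemma~\ref{lem:r_ro}(ii) and Theorem~\ref{th:parfenov} exactly as in the paper, and part (ii) rests on the constant scaling perturbation \eqref{eq:scaling} with $B=1$, $A\neq 1$ together with Corollary~\ref{th:AneqB}. Two points need correcting, though. First, your formula for the even part carries a spurious $\sgn(x)$: the even part of $\tilde r$ is $\tfrac{1-A}{2}\,r_+(|x|)$ (no sign factor), while the odd part is $\tfrac{1+A}{2}\,\sgn(x)\,r_+(|x|)$; as written, your ``$r^e$'' is odd. Second, and more substantively, your primary construction with $r_+:=r^o\uphar(0,1]$ produces a weight whose odd part is $\tfrac{1+A}{2}r^o$, not $r^o$, so it is \emph{not} of the form $r^o+r^e$ with $r^e$ even for the \emph{given} $r^o$, which is what the proposition demands; the appeal to scale-invariance of the Riesz basis property of \eqref{eq:star} does not repair this, since the issue is the decomposition, not the spectral hypothesis. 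The remedy you dismiss as a ``minor subtlety'' --- replacing $r_+$ by $\tfrac{2}{1+A}r^o$ so that the odd part of $\tilde r$ is exactly $r^o$ and $r^e(x)=\tfrac{1-A}{1+A}\,r^o(|x|)$ --- is in fact the necessary step; with $A=3$ this gives $r^e=-\tfrac12 r^o(|x|)$ and $\rho\equiv\tfrac12$, which is precisely the paper's choice. With that adjustment promoted from afterthought to the actual construction, the proof is correct.
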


\begin{proof}
Claim (i) follows from Lemma~\ref{lem:r_ro}(ii) and Theorem~\ref{th:parfenov}.
For (ii) set
\[
r^e:=-\frac{1}{2}r^o(|x|),\quad x\in [-1,1].
\]
Clearly, $r^e$ is even. Furthermore,
\[
r(x)=r^o(x)+r^e(x)=\begin{cases} \frac{1}{2}r^o(x), & x\in (0,1),\\
\frac{3}{2}r^o(x), & x\in (-1,0),
\end{cases},
\]
and it is easy to check that $r$ is weakly odd-dominated
with $\rho(\varepsilon) = 1/2$.
Finally, by Corollary \ref{th:AneqB}
(with $A=3$ and $B=1$),
the corresponding problem \eqref{separated:-1,1} has the Riesz basis property.
\end{proof}

The following example is related to an example in \cite{P}. 

\begin{example}\label{ex:wod_not_stable}
Set
\[
r(x) = \frac{1}{x(1-\log |x|)^{2}}, \quad x\in [-1,0)\cup (0,1].
\]
Note that $I_+$ is not positively increasing.
Then, by Corollary \ref{cor:pi} (ii),
problem (\ref{separated:-1,1}) does not have the Riesz basis property.
Choose any $A>1$ and consider the function $\tilde{r}$
defined by \eqref{eq:scaling} with $B=1$.
It is easy to see that $\tilde{r}$ is weakly odd-dominated.
Indeed, Definition \ref{def:odd} can be checked with
\[
\tilde{r}^o(x)=\frac{1+A}{2}r(x),\quad \tilde{r}^e(x)=\frac{1-A}{2}r(x),\quad \rho(\varepsilon) = \frac{A-1}{A+1}<1.
\]
Now, by Corollary \ref{th:AneqB},
problem \eqref{eq:sp_tilde} (also on $[-1,1]$) has the Riesz basis property,
whereas the corresponding eigenvalue problem with the weight function  $\tilde{r}^o$ does not have the Riesz basis property.
\end{example}


\section{A finite number of turning points and general boundary conditions}

In this section we explore the Riesz basis property
of general eigenvalue problem \eqref{eq:sp},\eqref{eq:gbc}.
We assume that the weight function $r$ has a finite number $n \geq 1$ of turning points, that is, we assume that there exists a polynomial $p$ of degree $n$ whose roots are simple and lie in $(a,b)$ such that $rp > 0$ a.e. on $[a,b]$.  Then the roots $x_1, \ldots, x_n \in (a,b)$ of $p$ are the points at which $r$ changes sign.  For $x \in [a,b]$ and sufficiently small $\epsilon > 0$ we extend the notation introduced in \eqref{eq:i_pm_no_x} as follows:
\begin{equation}\label{eq:i_pm}
I^+_x(\mu):=\int_{x}^{x+\mu}|r(t)| dt,\quad
I^-_x(\mu):=\int_{x-\mu}^{x}|r(t)| dt, \qquad \mu \in [0,\varepsilon].
\end{equation}
Note that for the functions $I^{\pm}$ defined in \eqref{eq:i_pm_no_x} we have $I^{\pm} = I^{\pm}_0$, i.e., $I^\pm=I_x^\pm$ with $x = 0$.

\subsection{Some consequences of Pyatkov's theorem}\label{sec:pyatkov}

We now present a result from \cite{sP2} and some of its consequences
for a class of weight functions which we call locally odd-dominated.

It follows from Remark~\ref{rem:3.2}(ii) and Corollary~\ref{cor:pi}(ii)
that in the case of an odd weight function $r$,
the Riesz basis property of problem \eqref{separated:-1,1}
depends only on a local behavior of the weight function at the turning point. It was shown by S.G. Pyatkov \cite{sP2} that the latter holds true in general. Namely, the result from \cite[Theorem~4.2]{sP2} which we paraphrase below, reduces the Riesz basis property of (\ref{eq:sp}), (\ref{eq:gbc}) to problems on the subintervals $[a_k,b_k]$, $k=1,\ldots,n$, where
\begin{equation}\label{intervals}
a \le a_1 < x_1 < b_1 \le a_2 < x_2 < b_2 \le a_3 < \cdots < b_{n-1} < a_n < x_n < b_n \le b
\end{equation}
and with the Dirichlet boundary conditions. Pyatkov's \cite[Theorem~4.2]{sP2} was originally given in the terminology of Theorem~\ref{th:parfenov}; we reformulate it in terms of positively increasing functions.

\begin{theorem}[Pyatkov]\label{ThmPyatkov}
Let $q,r\in L^1[a,b]$ and $C,D\in\dC^{2\times2}$ satisfy \eqref{CD}. Assume one of the following conditions:
\begin{enumerate}
\item[(SPsep)] the boundary conditions \eqref{eq:gbc} are separated;
\item[(SPeve)] $n$ is even (i.e. $r$ has the same sign on $(a, x_1)$ and $(x_n, b)$);
\item[(SPa)] the function $I_a^+$ given by \eqref{eq:i_pm} with $x=a$ is positively increasing;
\item[(SPb)] the function $I_b^-$ given by \eqref{eq:i_pm} with $x=b$ is positively increasing.
\end{enumerate}
Then, problem \eqref{eq:sp}, \eqref{eq:gbc} has the Riesz basis property if and only if for every $k \in \{1,\ldots, n\}$  the problem
\begin{equation}\label{separatedLoc}
-f'' + q f = {\lambda} r f
\quad \mbox{on} \quad [a_k,b_k], \qquad
f(a_k) = f(b_k) = 0
\end{equation}
has the Riesz basis property in
$L_{|r|}^2[a_k,b_k]$.
\end{theorem}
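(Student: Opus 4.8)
The plan is to prove Theorem~\ref{ThmPyatkov} by a localization argument that reduces the global Riesz basis property of the operator $L_{\mC,\mD}$ to the Riesz basis properties of the local Dirichlet operators on the intervals $[a_k,b_k]$, using the machinery of definitizable operators and their critical points together with Proposition~\ref{ThmCurgusNaiman}. First I would recall that by Proposition~\ref{ThmCurgusNaiman} the Riesz basis property of \eqref{eq:sp}, \eqref{eq:gbc} is equivalent to $\infty$ being a regular critical point of $L_{\mC,\mD}$, and likewise each local problem \eqref{separatedLoc} has the Riesz basis property if and only if $\infty$ is a regular critical point of the corresponding local operator $L^F_k$ in $L^2_{|r|}[a_k,b_k]$. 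So the entire statement becomes a statement about regularity of the critical point $\infty$, which is a \emph{local} spectral notion in the sense that it is governed by the behavior of the resolvent near $\infty$, and one expects it to depend only on the weight $r$ near the turning points. Since by Corollary~\ref{CorCurgus}(i) the potential $q$ is irrelevant, I would set $q\equiv 0$ throughout.

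The core of the argument, which is precisely \cite[Theorem~4.2]{sP2}, is a decomposition of the weighted space. On the ``definite'' pieces — the complementary subintervals where $r$ has constant sign, namely $[a,a_1]$, $[b_1,a_2]$, \ldots, $[b_n,b]$ — the operator restricted there (with Dirichlet conditions) is semibounded self-adjoint or anti-self-adjoint in the corresponding definite subspace, so $\infty$ is automatically a regular critical point (indeed not a critical point at all) on those pieces. Thus regularity of $\infty$ for the global operator is equivalent to regularity of $\infty$ for the direct sum of the operators on the indefinite pieces $[a_k,b_k]$. The passage from the global operator with boundary conditions \eqref{eq:gbc} to this direct orthogonal decomposition is where the hypotheses (SPsep), (SPeve), (SPa), (SPb) enter: one needs the boundary conditions, together with the internal transmission conditions at the cut points $a_k,b_k$, to be ``compatible'' with the decomposition in the sense that gluing/ungluing at these points is a bounded perturbation that does not affect regularity of $\infty$. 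I would invoke Proposition~\ref{ThmCurgusForm}: it suffices to show that the form domain $\dom(\gt_{\mC,\mD})$ is, up to the boundedly-invertible identifications, the same as $\bigoplus_k \dom(\gt^F_{[a_k,b_k]})\oplus(\text{pieces on definite intervals})$, because regularity of $\infty$ depends only on the form domain. In the separated case (SPsep) this is immediate since separated boundary conditions decouple at $a$ and $b$ and the internal Dirichlet cuts decouple everything; in the even case (SPeve) the two outermost intervals $(a,x_1)$ and $(x_n,b)$ carry the \emph{same} sign of $r$, so any coupling between $a$ and $b$ through \eqref{eq:gbc} stays within one definite subspace and is harmless; and conditions (SPa), (SPb) guarantee that $\infty$ is already a regular critical point for the problem on the outermost cell containing $a$ (resp.\ $b$) \emph{including} its boundary, by Parfenov/Corollary~\ref{cor:pi} applied near $a$ (resp.\ $b$), so again the coupling between the two ends cannot destroy regularity.

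The main obstacle, I expect, is the last point: making precise why, when $n$ is odd and the boundary conditions are non-separated (so $(a,x_1)$ and $(x_n,b)$ carry \emph{opposite} signs and are linked by \eqref{eq:gbc}), the sufficient conditions (SPa) or (SPb) suffice to still get the clean reduction. The subtlety is that without (SPa)/(SPb) the example in \cite{BC2} shows the reduction genuinely fails, so one cannot argue by pure abstract nonsense; the conditions (SPa)/(SPb) must be used to ``absorb'' the boundary-linking cell into a piece on which $\infty$ is independently known to be regular, and then treat the remaining coupling as a finite-rank (hence form-bounded, relatively compact) perturbation that preserves regularity of critical points — here one uses the stability of regular critical points under such perturbations, a result from the theory of definitizable operators \cite{L} already cited in the paper. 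I would organize the proof as: (1) reduce to $q\equiv 0$; (2) decompose $L^2_{|r|}[a,b]$ along the partition \eqref{intervals} and identify the definite pieces, on which $\infty$ is trivially regular; (3) in each of the four cases, verify that the remaining operator (with boundary conditions inherited from \eqref{eq:gbc}) differs from the direct sum of the local Dirichlet operators by a perturbation that is bounded in the form sense and preserves the form domain, then apply Proposition~\ref{ThmCurgusForm}; (4) conclude via Proposition~\ref{ThmCurgusNaiman}. Steps (1), (2), (4) are routine; the heart of the matter, and the place where one must follow \cite{sP2} carefully, is step (3) in the non-separated odd case under (SPa) or (SPb).
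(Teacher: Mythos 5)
The first thing to note is that the paper does not prove this statement at all: Theorem~\ref{ThmPyatkov} is quoted from Pyatkov \cite[Theorem~4.2]{sP2} (merely reformulated in the language of positively increasing functions), and the only argument the authors supply is the observation in Remark~\ref{Rem_q_unbounded}(i) that Corollary~\ref{CorCurgus}(i) upgrades Pyatkov's hypothesis $q\in L^\infty[a,b]$ to $q\in L^1[a,b]$. So your step (1) reproduces the paper's only original contribution here, and the rest of your plan is an attempt to reconstruct Pyatkov's argument from the tools available in the paper.

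As a reconstruction it has two genuine gaps. First, Proposition~\ref{ThmCurgusForm} cannot be used the way you propose: it compares two realizations $L_{\mC,\mD}$ and $L_{\tilde{\mC},\tilde{\mD}}$ of the \emph{same} expression on $[a,b]$ under the hypothesis that their closed form domains are \emph{equal}. The direct sum of the local Dirichlet operators on the $[a_k,b_k]$ and of the operators on the definite pieces has a strictly smaller form domain (its elements must vanish at all interior cut points $a_k,b_k$), so the hypothesis fails; what is actually needed is a stability statement for the regularity of the critical point $\infty$ under finite-codimensional changes of the form domain, or Pyatkov's interpolation-space criterion --- neither of which is contained in Proposition~\ref{ThmCurgusForm} as stated. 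Second, and more seriously, the decisive case --- $n$ odd, non-separated boundary conditions, hypothesis (SPa) or (SPb) --- is exactly where you defer to ``following \cite{sP2} carefully'' and otherwise invoke stability of regular critical points under finite-rank or relatively compact perturbations ``from \cite{L}''. No such general stability theorem exists (regularity of critical points is notoriously unstable under compact perturbations; the counterexample of \cite{BC2} that you yourself cite is a symptom of this), and this is precisely the analytic heart of the theorem. For comparison, when the paper later needs this step for the specific conditions \eqref{eq:c2.3} it does not perturb at all: Lemma~\ref{lem:shift} unitarily ``unfolds'' the boundary coupling into the auxiliary problem \eqref{eq:evptilde}, in which $a$ becomes an honest interior turning point of the weight \eqref{eq:ti_r}; then (SPeve) applies, and the role of (SPa) is exactly that this new turning point satisfies Parfenov's condition. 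Your sketch would become a proof only if you carried out such an explicit unfolding for general \eqref{eq:gbc}, or imported Pyatkov's interpolation argument wholesale.
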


The following particular case of Theorem~\ref{ThmPyatkov} emphasizes that the Riesz basis property depends only on the local behavior of the weight function near its turning point.

\begin{corollary}\label{ThmPyatkov2}
Let $q,r\in L^1[-1,1]$ and  $xr(x)>0$ for a.a. $x\in [-1,1]$.
Then for arbitrary $a_1,b_1$ such that $-1 < a_1 < 0 <  b_1 < 1$
the Riesz basis property of \eqref{separated:-1,1} in $L_{|r|}^2[-1,1]$
is equivalent to the Riesz basis property of
\begin{equation} \notag 
-f'' + q f = {\lambda} r f
\quad \mbox{on} \quad [a_1,b_1], \qquad
f(a_1) = f(b_1) = 0
\end{equation}
in $L_{|r|}^2[a_1,b_1]$.
\end{corollary}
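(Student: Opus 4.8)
The plan is to derive Corollary~\ref{ThmPyatkov2} directly from Theorem~\ref{ThmPyatkov} by checking that the hypotheses of the latter are satisfied in the present setting. Here we have a single turning point $x_1=0$, so $n=1$, which is odd; hence conditions (SPsep), (SPeve), (SPa), (SPb) must be re-examined one at a time. Since the boundary conditions in \eqref{separated:-1,1} are the Dirichlet conditions $f(-1)=f(1)=0$, these are separated, so condition (SPsep) is automatically fulfilled. Therefore Theorem~\ref{ThmPyatkov} applies with $a=-1$, $b=1$ and the single turning point $x_1=0$.

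With these choices, the interleaving condition \eqref{intervals} reduces to $-1\le a_1<0<b_1\le 1$, and Theorem~\ref{ThmPyatkov} states that problem \eqref{separated:-1,1} has the Riesz basis property in $L^2_{|r|}[-1,1]$ if and only if the problem
\[
-f''+qf=\lambda r f \quad\text{on}\quad[a_1,b_1],\qquad f(a_1)=f(b_1)=0
\]
has the Riesz basis property in $L^2_{|r|}[a_1,b_1]$. This is exactly the assertion of Corollary~\ref{ThmPyatkov2}, with the only cosmetic difference that the corollary is stated for open endpoints $-1<a_1<0<b_1<1$; this is a strictly stronger hypothesis than $-1\le a_1<0<b_1\le 1$, so no extra work is needed, and in fact one could even allow $a_1=-1$ or $b_1=1$.

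There is essentially no obstacle here: the corollary is a direct specialization of Pyatkov's theorem to the case of one turning point with Dirichlet boundary conditions, invoking clause (SPsep). The only point worth a sentence of comment is that the local Dirichlet problem on $[a_1,b_1]$ is itself an instance of \eqref{separated} (indeed of \eqref{separatedLoc} with $n=1$, $k=1$), so the statement is internally consistent with the reduction philosophy of Section~\ref{reduction}. Consequently the proof is a one-line application of Theorem~\ref{ThmPyatkov}.

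\begin{proof}
Apply Theorem~\ref{ThmPyatkov} with $a=-1$, $b=1$, the single turning point $x_1=0$, and $a_1,b_1$ as given (so $n=1$). Since the Dirichlet boundary conditions $f(-1)=f(1)=0$ are separated, hypothesis (SPsep) holds. Moreover $-1<a_1<0<b_1<1$ satisfies the interlacing condition \eqref{intervals} in the case $n=1$. Thus Theorem~\ref{ThmPyatkov} yields that \eqref{separated:-1,1} has the Riesz basis property in $L^2_{|r|}[-1,1]$ if and only if the problem $-f''+qf=\lambda rf$ on $[a_1,b_1]$ with $f(a_1)=f(b_1)=0$ has the Riesz basis property in $L^2_{|r|}[a_1,b_1]$, which is the claim.
\end{proof}
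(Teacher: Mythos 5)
Your proof is correct and is exactly the argument the paper intends: the corollary is stated as a direct particular case of Theorem~\ref{ThmPyatkov} with $n=1$, $x_1=0$, and clause (SPsep) for the Dirichlet conditions, and the paper gives no further proof. Your observation that $a_1=-1$ or $b_1=1$ would also be admissible under \eqref{intervals} is accurate as well.
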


\begin{remark}\label{Rem_q_unbounded}
(i)
Theorem \ref{ThmPyatkov} was stated in \cite{sP2} for $q\in L^\infty[a,b]$.
However, by Corollary~\ref{CorCurgus}(i), it remains true for an arbitrary $q\in L^1[a,b]$.

(ii)
Problem \eqref{eq:sp} with one turning point
and anti-periodic boundary conditions
(i.e. (\ref{eq:c2.3}) with $c=-1, \, d=0$) was studied in \cite{BC2}.
Neither of the conditions (SPsep), (SPeve) of Theorem~\ref{ThmPyatkov} holds in this case. It was pointed out in \cite{BC2}  that the problem
with Dirichlet boundary conditions may have the Riesz basis property, while the problem with anti-periodic boundary conditions may not have the Riesz basis property. In this case $r$ does not satisfy
neither of the conditions (SPa), (SPb). Our Example~\ref{ex:periodic_2} below offers another similar example with periodic boundary conditions.
\end{remark}

Next, we give a local version of Definition~\ref{def:odd}.

\begin{definition}\label{def:locodd}
A function $r$ is called {\em locally odd at} $x_0 \in (a,b)$ if there exists $\varepsilon > 0$ such that the function $\tilde{r}(x) = r(\varepsilon x+x_0)$ is odd on $[-1,1]$. A function $r$ is called {\em locally odd-dominated at} $x_0$ if there exists $\varepsilon > 0$ such that either $\tilde{r}$ or $-\tilde{r}$ is odd-dominated on $[-1,1]$.
Furthermore, the function $r$ is called {\em locally odd at the boundary} if there exists  $\varepsilon >0$ such that $r(a+x)=-r(b-x)$ for a.a. $x\in (0,\varepsilon)$.
\end{definition}

The first part of the following corollary is a generalization of Theorem~\ref{lem:odddom} to locally odd-dominated weights.
The second part is a generalization of \cite[Lemma 4.3]{sP2}
and \cite[Theorem 2.1]{K} from locally odd to locally odd-dominated weights.

\begin{corollary}\label{ParfenovSufficient}
\begin{enumerate}

\item[(i)]
Let $q,r\in L^1[-1,1]$ and  $xr(x)>0$ for a.a. $x \in [-1,1]$
and assume that $r$ is locally odd-dominated at $0$.
Then \eqref{separated:-1,1} has the Riesz basis property
if and only if the function $I^+$ is positively increasing.

\item[(ii)]
Let $q, r \in L^1[a,b]$ and $C,D\in\dC^{2\times2}$ satisfy \eqref{CD}. Assume that $r$ has $n$ turning points and that it is locally odd-dominated at each turning point $x_k$, $k \in \{1,\ldots, n\}$. Moreover, assume that one of the conditions {\rm (SPsep)}, {\rm (SPeve)}, {\rm (SPa)} and {\rm (SPb)} from Theorem~{\rm\ref{ThmPyatkov}} is satisfied.
Then, problem \eqref{eq:sp}, \eqref{eq:gbc} has the Riesz basis property if and only if for every $k \in \{1,\ldots, n\}$ the functions $I_{x_k}^+$ defined by \eqref{eq:i_pm} are positively increasing.
\end{enumerate}
\end{corollary}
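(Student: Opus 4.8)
The plan is to deduce Corollary~\ref{ParfenovSufficient} by combining the already-established equivalence between the Riesz basis property and a local positivity condition (Theorem~\ref{lem:odddom} and Corollary~\ref{cor:pi}) with the localization results of Pyatkov (Theorem~\ref{ThmPyatkov}, Corollary~\ref{ThmPyatkov2}). Part~(i) is essentially a reduction to the case of one turning point at the origin; part~(ii) is then a patching of the one-turning-point statements across all the turning points.

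For part~(i), I would argue as follows. By hypothesis $r$ is locally odd-dominated at $0$, so there is $\varepsilon>0$ such that the rescaled function $\tilde r(x)=r(\varepsilon x+x_0)$ (with $x_0=0$) or its negative is odd-dominated on $[-1,1]$. Rescaling the independent variable $x\mapsto \varepsilon x$ is a linear change of variables that transforms problem \eqref{separated:-1,1} on a neighborhood of $0$ into a problem of the same form for $\tilde r$ on $[-1,1]$, and multiplying the weight by $-1$ does not affect the Riesz basis property (it merely changes $\lambda$ to $-\lambda$). Hence, by Corollary~\ref{ThmPyatkov2}, the Riesz basis property of \eqref{separated:-1,1} is equivalent to that of the localized problem on $[a_1,b_1]\subset(-\varepsilon,\varepsilon)$, which in turn (after rescaling) is equivalent to the Riesz basis property for the odd-dominated weight $\tilde r$. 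By Theorem~\ref{lem:odddom} this holds if and only if the corresponding $I^+$ for $\tilde r$ is positively increasing. Finally, by Remark~\ref{rem:3.2}(ii), the property of $I^+$ being positively increasing depends only on the local behavior of $r$ near $0$ and is invariant under the rescaling of the variable and the sign change; so this is equivalent to $I^+$ (for the original $r$) being positively increasing. The one point that requires care here is checking that scaling the variable and negating the weight leave both the Riesz basis property and the "positively increasing" property unchanged — but both facts are routine: the former because the operators are unitarily equivalent up to the trivial substitutions, and the latter because $S_f(t)$ in Definition~\ref{def:osv} is unaffected by replacing $f(x)$ with $f(cx)$ or with a positive multiple of $f$.

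For part~(ii), I would invoke Theorem~\ref{ThmPyatkov}: under any one of the conditions (SPsep), (SPeve), (SPa), (SPb), the Riesz basis property of \eqref{eq:sp},\eqref{eq:gbc} is equivalent to the Riesz basis property of each of the $n$ Dirichlet problems \eqref{separatedLoc} on $[a_k,b_k]$, $k=1,\dots,n$. Each such interval contains the single turning point $x_k$, at which $r$ is by hypothesis locally odd-dominated. Applying part~(i) of the present corollary (after an affine change of variable sending $[a_k,b_k]$ to $[-1,1]$ and $x_k$ to $0$, exactly as in the proof of part~(i)), the Riesz basis property of the $k$-th problem is equivalent to the function $I^+_{x_k}$ defined in \eqref{eq:i_pm} being positively increasing. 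Conjoining over $k=1,\dots,n$ gives the claim: \eqref{eq:sp},\eqref{eq:gbc} has the Riesz basis property if and only if every $I^+_{x_k}$ is positively increasing.

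**The main obstacle** is largely bookkeeping rather than conceptual: one must be careful that the affine rescaling $x\mapsto \varepsilon x + x_k$ carries the local Dirichlet eigenvalue problem on $[a_k,b_k]$ faithfully onto a problem of the form \eqref{separated:-1,1} on $[-1,1]$ — in particular that the transformed weight is still in $L^1$, still changes sign at exactly one interior point, and is (up to sign) odd-dominated there — and that one may freely pass between $I^+_{x_k}$ and the $I^+$ of the rescaled weight, since "positively increasing" is a purely local, scale-invariant notion (Remark~\ref{rem:3.2}(ii) and Lemma~\ref{lem:osv}). There is also the minor wrinkle that Theorem~\ref{ThmPyatkov} was stated for $q\in L^\infty$, but Remark~\ref{Rem_q_unbounded}(i) (via Corollary~\ref{CorCurgus}(i)) removes this restriction, so one may work with arbitrary $q\in L^1[a,b]$.
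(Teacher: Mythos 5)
Your proof is correct and follows essentially the same route as the paper: part (i) is obtained by localizing via Corollary~\ref{ThmPyatkov2}, rescaling isometrically onto $[-1,1]$, and applying Theorem~\ref{lem:odddom}, noting that the positively-increasing property is local and scale-invariant; part (ii) then follows by the same argument with Theorem~\ref{ThmPyatkov} in place of Corollary~\ref{ThmPyatkov2}. Your explicit remarks on the sign change of the weight (needed at turning points where $-\tilde r$ rather than $\tilde r$ is odd-dominated) and on the $L^1$ versus $L^\infty$ issue for $q$ are details the paper leaves implicit, but they do not change the argument.
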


\begin{proof}
To prove (i), assume that the function $\tilde{r}$ with $\varepsilon >0$ from Definition~\ref{def:locodd} is odd-dominated. Put $a_1=-\varepsilon$, $b_1=\varepsilon$. Then the statement follows from Corollary~\ref{ThmPyatkov2} and Theorem~\ref{lem:odddom}
using the isometric transformation $\Phi : L^2_{|r|}[-\varepsilon,\varepsilon] \rightarrow L^2_{|\tilde{r}|}[-1,1]$
with $\Phi(f)(x) := \sqrt{\varepsilon} f(\varepsilon x)$.
Note that the function $I^+$ associated with $r$ is positively increasing if and only if the analogous function associated with $\tilde{r}$ is positively increasing.

Claim (ii) follows similarly using Theorem~\ref{ThmPyatkov} instead of Corollary~\ref{ThmPyatkov2}.
\end{proof}

\subsection{The Riesz basis property for the general eigenvalue problem}\label{ss:4.2}

The main objective of this subsection is to improve Pyatkov's Theorem.
As it was noticed in Remark~\ref{Rem_q_unbounded}(ii),
if conditions (SPsep) and (SPeve) of Theorem~\ref{ThmPyatkov} are not fulfilled, then there are cases when problem \eqref{eq:sp}, \eqref{eq:gbc} does not have the Riesz basis property, even though all the problems in \eqref{separatedLoc} have the Riesz basis property. However, by Theorem~\ref{ThmPyatkov}, one of the conditions (SPa) or (SPb) is sufficient to guarantee that the Riesz basis property of each of the problems in \eqref{separatedLoc}  implies the Riesz basis property for problem \eqref{eq:sp}, \eqref{eq:gbc}. In this subsection we shall show in which cases of nonseparated boundary conditions (SPa) or  (SPb) is either obsolete or necessary.

In view of Subsections~\ref{operators} and~\ref{sub:conclusion},
nonseparated boundary conditions may only be of two types: either \eqref{eq:c1} or \eqref{eq:c2.3}. We begin with the case of boundary conditions \eqref{eq:c1}.

\begin{proposition}\label{ParfenovNecessary_2}
Let $q, r \in L^1[a,b]$ and assume that $r$ has $n$ turning points $x_1,\ldots,x_n$. Let  $a_1,\ldots,a_n$ and $b_1,\ldots,b_n$ satisfy  \eqref{intervals}.  Then:
\begin{enumerate}
\item[(i)] Problem \eqref{eq:sp}, \eqref{eq:c1} has the Riesz basis property
if and only if for each $k \in \{1,\ldots, n\}$ the problem in
\eqref{separatedLoc} has the Riesz basis property in
$L_{|r|}^2[a_k,b_k]$.
\item[(ii)]
Assume that $r$ is locally odd-dominated at each turning point $x_k$, $k \in \{1,\ldots, n\}$. Then problem
\eqref{eq:sp}, \eqref{eq:c1} has the Riesz basis property
if and only if for each $k \in \{1,\ldots, n\}$ the function $I_{x_k}^+$ is positively increasing.
\end{enumerate}
\end{proposition}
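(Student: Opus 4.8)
The plan is to deduce Proposition~\ref{ParfenovNecessary_2} from Pyatkov's Theorem~\ref{ThmPyatkov} by showing that the boundary conditions \eqref{eq:c1} always satisfy at least one of the structural hypotheses (SPsep), (SPeve), (SPa), (SPb) — more precisely, that the Riesz basis property of $L_\mB$ is equivalent to that of the Dirichlet operator $L^F$ regardless of the local behavior of $r$ near $a$ and $b$. The key observation is that by Corollary~\ref{CorCurgus}(ii) the Riesz basis property of \eqref{eq:sp}, \eqref{eq:c1} does not depend on the self-adjoint matrix $\mB$; in particular we may take $\mB = 0$, i.e.\ Neumann boundary conditions $f'(a)=f'(b)=0$. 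By Proposition~\ref{ThmFormDomain}, the form domain $\dom(\gt_\mB)$ is the full space $\{f\in L^2_{|r|}[a,b]: f\in AC[a,b],\ \int_a^b|f'|^2\,dx<\infty\}$, with no boundary constraint whatsoever.

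First I would argue that this form-domain fact is exactly what makes the ``near-boundary'' issue disappear. The obstruction in the non-separated/odd-number-of-turning-points case (the example in \cite{BC2}) comes from boundary conditions like \eqref{eq:c2.3} that genuinely couple the two endpoints; the form domain of $\gt_{c,d}$ retains the coupling constraint $cf(a)=f(b)$. By contrast $\gt_\mB$ imposes no coupling and no one-sided constraint, so it sits ``between'' the Dirichlet form $\gt^F$ and what one would get by splitting $[a,b]$ at the turning points. Concretely: the interval-splitting argument underlying Theorem~\ref{ThmPyatkov} should apply directly here, because a Neumann/free boundary condition at $a$ (respectively $b$) does not interfere with the decoupling near the first (respectively last) turning point in the way a genuine coupling does. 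Thus (i) follows by running Pyatkov's reduction with the free boundary conditions and checking that the decoupling into the subintervals $[a_k,b_k]$ in \eqref{separatedLoc} goes through. Then part~(ii) is immediate: apply Corollary~\ref{ParfenovNecessary2}(ii) to each turning point, or equivalently combine (i) with Corollary~\ref{ThmPyatkov2} and Theorem~\ref{lem:odddom} (via the affine rescaling $x\mapsto \varepsilon x + x_k$, exactly as in the proof of Corollary~\ref{ParfenovSufficient}) to replace ``Riesz basis property on $[a_k,b_k]$'' by ``$I^+_{x_k}$ positively increasing.''

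The main obstacle is justifying the decoupling step rigorously without re-proving Pyatkov's theorem. The cleanest route is to show that \emph{some} condition among (SPsep)--(SPb) of Theorem~\ref{ThmPyatkov} is automatically met, so that Theorem~\ref{ThmPyatkov} can simply be quoted. If $n$ is even we are done by (SPeve). If $n$ is odd, then $r$ has opposite signs near $a$ and near $b$; here I would invoke Corollary~\ref{CorCurgus}(ii) to replace \eqref{eq:c1} by the Neumann conditions, and then argue — using Proposition~\ref{ThmCurgusForm} together with a comparison of form domains — that the Neumann operator $L_0$ has a regular critical point at infinity if and only if the direct sum operator $L^F_{[a,a_1]}\oplus L^F_{[a_1,b_1]}\oplus\cdots$ does, by an argument analogous to \cite[Theorem~4.2]{sP2} but simpler because no endpoint coupling is present. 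Alternatively, and perhaps more safely, one can cite \cite[Theorem~4.1 or Theorem~4.2]{sP2} directly in the form that already covers separated \emph{and} Neumann-type boundary conditions; Pyatkov's result is stated for general self-adjoint boundary conditions of rank-$2$ type, and the free/Neumann case is the $\rank\mC = 2$, $\mB=0$ instance, which either falls under an explicit clause of his theorem or is handled by the same proof. I would state this explicitly and keep the verification short, since the substance is entirely contained in Theorem~\ref{ThmPyatkov} and Corollary~\ref{CorCurgus}.
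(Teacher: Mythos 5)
Your reduction is the right one and is exactly the paper's: by Corollary~\ref{CorCurgus}(ii) the Riesz basis property of \eqref{eq:sp}, \eqref{eq:c1} is independent of $\mB=\mB^*$, so one may take $\mB=0$, i.e.\ Neumann conditions $f'(a)=f'(b)=0$. But you then treat the case of odd $n$ as a genuine obstacle and propose to resolve it by a form-domain comparison with a direct sum of Dirichlet operators, ``analogous to Pyatkov's argument but simpler,'' without actually carrying that argument out. This is a gap, and moreover an unnecessary one: the point you miss is that the Neumann conditions \emph{are separated boundary conditions}. As noted in Subsection~\ref{sub:rank2} and Subsection~\ref{sub:conclusion}, the conditions \eqref{eq:c1} are separated precisely when $b_{12}=0$, which holds for $\mB=0$ since each condition then involves only one endpoint. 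Hence hypothesis (SPsep) of Theorem~\ref{ThmPyatkov} is satisfied for every $n$ (even or odd), and part (i) follows by quoting that theorem verbatim --- no new decoupling argument, no case distinction on the parity of $n$, and no appeal to (SPa)/(SPb) or to the ``free boundary condition does not interfere'' heuristic is needed. Your discussion of why the near-boundary issue disappears (comparison with the coupled form domain of $\gt_{c,d}$) is a reasonable intuition but plays no role in a correct proof.

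Part (ii) is fine in substance: once (i) is in place, combine it with the locally odd-dominated hypothesis via the rescaling $x\mapsto\varepsilon x+x_k$ and Theorem~\ref{lem:odddom}; equivalently, apply Corollary~\ref{ParfenovSufficient}(ii) directly under (SPsep). Note, though, that the result you cite as ``Corollary~\ref{ParfenovNecessary_2}(ii)'' in that step does not exist as stated (it is the proposition being proved); the intended reference is Corollary~\ref{ParfenovSufficient}.
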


\begin{proof}
By Corollary~\ref{CorCurgus} and Subsection~\ref{sub:conclusion}
we can choose the matrix $\mB$ in \eqref{eq:c1} to be $0$. Then \eqref{eq:c1} are the Neumann boundary conditions and hence separated.
Now, the statement follows from Theorem~\ref{ThmPyatkov} and
Corollary~\ref{ParfenovSufficient} by condition (SPsep).
\end{proof}

We continue with the eigenvalue problem \eqref{eq:sp} with
the boundary conditions \eqref{eq:c2.3}:
\begin{equation}\label{eq:evp15}
-f'' + q f = {\lambda} r f
\quad \mbox{on}
\quad [a,b], \qquad
 c f(a) = f(b), \quad f'(a) =  \overline{c} f'(b)+df(a)
\end{equation}
with $c\in \dC\setminus\{0\}$ and $d\in\dR$.
Together with problem \eqref{eq:evp15} we will consider the following auxiliary eigenvalue problem
\begin{equation}\label{eq:evptilde}
-f'' = {\lambda} \widetilde{r} f
\quad \mbox{on}
\quad [\widetilde{a},\widetilde{b}], \qquad
c f(\widetilde{a}) = f(\widetilde{b}), \quad f'(\widetilde{a}) = \overline{c}  f'(\widetilde{b}),
\end{equation}
where
\begin{equation}\label{eq:ti_a}
\widetilde{a}:=a - \frac{\varepsilon}{|c|^2},\quad
\widetilde{b}:=b - \varepsilon
\end{equation}
with  $\varepsilon$ such that $0 < \varepsilon < \frac{1}{2} \min\{x_1 - a, b - x_n\}$ and where the weight function $\widetilde{r}$ is given by
\begin{equation}\label{eq:ti_r}
\widetilde{r}(x):=
\begin{cases}
|c|^4r(b - |c|^2(a - x)),& x\in [\widetilde{a},a)  \\
r(x), & x\in [a,\widetilde{b}]
\end{cases}.
\end{equation}
Clearly, the weight function $\widetilde{r}$ is obtained from $r$ by shifting and scaling.
Notice that $x_k$ is a turning point for $\widetilde{r}$ for each $k\in\{1,\dots,n\}$. Moreover, if $n$ is odd, then $x_0 := a$ is an additional turning point of $\tilde{r}$. Hence, if $n$ is odd, the weight function $\tilde{r}$ in problem \eqref{eq:evptilde} has the even number $n+1$ of turning points $x_0,x_1,\ldots, x_n$. Moreover, the next result shows that the spectral properties of problems \eqref{eq:evp15} and \eqref{eq:evptilde} are closely connected.

\begin{lemma}\label{lem:shift}
Let $q, r \in L^1[a,b]$ and assume that $r$ has $n$ turning points.
Problem \eqref{eq:evp15} has the Riesz basis property in $L^2_{|r|}[a,b]$
if and only if problem \eqref{eq:evptilde} has the Riesz basis property in $L^2_{|\widetilde{r}|}[\widetilde{a},\widetilde{b}]$.
\end{lemma}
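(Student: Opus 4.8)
The plan is to show that the two eigenvalue problems \eqref{eq:evp15} and \eqref{eq:evptilde} are unitarily equivalent in the appropriate weighted $L^2$ spaces via an explicit change of variables, so that in particular infinity is a regular critical point of one operator if and only if it is for the other; by Proposition~\ref{ThmCurgusNaiman} this yields the equivalence of the Riesz basis properties. First I would introduce the piecewise-affine bijection $\varphi:[\widetilde a,\widetilde b]\to[a,b]$ defined so that it restricts to the identity on $[a,\widetilde b]$ and maps the short left interval $[\widetilde a,a)$ affinely onto $[\widetilde b,b)$. Explicitly, on $[a,\widetilde b]$ one sets $\varphi(x)=x$, while on $[\widetilde a,a)$ one sets $\varphi(x)=b-|c|^2(a-x)$, which sends $\widetilde a=a-\varepsilon/|c|^2$ to $\widetilde b=b-\varepsilon$ and $a$ to $b$; here $\varphi'=1$ on the right piece and $\varphi'=|c|^2$ on the left piece. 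The definition of $\widetilde r$ in \eqref{eq:ti_r} is precisely $\widetilde r = (r\circ\varphi)\,|\varphi'|^{\pm 2}$ with the correct powers, arranged so that the transformation below is an isometry and simultaneously intertwines the differential expressions.

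Next I would define the operator $U:L^2_{|r|}[a,b]\to L^2_{|\widetilde r|}[\widetilde a,\widetilde b]$ by $(Uf)(x):=|\varphi'(x)|^{1/2}\,f(\varphi(x))$ on each of the two pieces. A direct computation with the change of variables $t=\varphi(x)$ shows $\int|Uf|^2|\widetilde r|\,dx=\int|f|^2|r|\,dt$ using \eqref{eq:ti_r} and the values of $|\varphi'|$, so $U$ is unitary. On the piece $[a,\widetilde b]$ the map $U$ is the identity, so it trivially intertwines $\ell[f]=\tfrac1r(-f''+qf)$ with $-\tfrac1{\widetilde r}f''$ (recall $q$ may be taken to be $0$ by Corollary~\ref{CorCurgus}(i) and $\widetilde r=r$ there); on the left piece, where $\varphi$ is affine with slope $|c|^2$, one checks that conjugating $-\tfrac{d^2}{dx^2}$ by the affine change of variables and the scaling factor produces exactly $-\tfrac{1}{\widetilde r}\tfrac{d^2}{dx^2}$ after dividing by the weight, because a pure dilation $x\mapsto \alpha x + \beta$ turns $f\mapsto \alpha^{1/2}f(\alpha x+\beta)$ into an operator for which $-(g)''$ corresponds to $\alpha^2\,(-f'')\circ\varphi$, and $\alpha^2=|c|^4$ is exactly the factor in front of $r$ in \eqref{eq:ti_r}. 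Thus $U\,\dom(L_{c,d})$ is carried onto functions that solve $-g''=\lambda\widetilde r g$ on the two subintervals; the only thing left to reconcile is the behaviour at the interior junction $x=a$ and the boundary conditions.

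The main obstacle, and the part deserving genuine care, is the bookkeeping of the boundary and transmission conditions. Under $U$, the value $f(a)$ and the one-sided derivative $f'(a^-)$ (in the $[a,b]$ picture this is $f'(b^-)$ read through $\varphi$) get multiplied by powers of $|c|$ coming from the $|\varphi'|^{1/2}$ prefactor and the chain rule, and the original non-separated conditions $cf(a)=f(b)$, $f'(a)=\overline c f'(b)+df(a)$ in \eqref{eq:evp15} must be shown to transform into: (a) a matching/continuity condition for $g=Uf$ and $g'$ across the interior point $a\in(\widetilde a,\widetilde b)$, making $g$ a genuine element of the domain of the operator on $[\widetilde a,\widetilde b]$ with no interior singularity, and (b) the non-separated condition $cg(\widetilde a)=g(\widetilde b)$, $g'(\widetilde a)=\overline c g'(\widetilde b)$ at the new endpoints. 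Concretely, $g(\widetilde b^-)=|c|\, f(b^-)\cdot(\text{factor})$ and $g(\widetilde a^+)$ relates to $f(a)$, and one must track constants so that the two conditions of \eqref{eq:evp15} split cleanly into an interior gluing identity at $a$ and the endpoint identities of \eqref{eq:evptilde}; the role of the $\varepsilon/|c|^2$ versus $\varepsilon$ asymmetry in \eqref{eq:ti_a} is exactly to make $\varphi$ match up $\widetilde a$ with $\widetilde b$ under the dilation by $|c|^2$. I would also invoke Corollary~\ref{CorCurgus}(iii) to reduce to $d=0$ at the outset, which removes the $df(a)$ term and makes the transmission condition at $a$ the plain $C^1$-gluing $g(a^-)=g(a^+)$, $g'(a^-)=g'(a^+)$. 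Once the domains are shown to correspond under the unitary $U$, the operators $L_{c,d}$ on $L^2_{|r|}[a,b]$ and the operator for \eqref{eq:evptilde} on $L^2_{|\widetilde r|}[\widetilde a,\widetilde b]$ are unitarily equivalent, hence have the same critical point structure at infinity, and Proposition~\ref{ThmCurgusNaiman} finishes the proof.
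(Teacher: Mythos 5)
Your overall strategy is exactly the paper's: reduce to $q=0$, $d=0$ by Corollary~\ref{CorCurgus}, build a piecewise change of variables $\varphi$ that is the identity on $[a,\widetilde b]$ and sends $[\widetilde a,a)$ affinely onto $[\widetilde b,b)$ via $x\mapsto b-|c|^2(a-x)$, and show the two operators are unitarily equivalent. However, the concrete transformation you write down is wrong, and the error sits precisely in the step you defer as ``bookkeeping deserving genuine care.'' You take $(Uf)(x)=|\varphi'(x)|^{1/2}f(\varphi(x))$, i.e.\ prefactor $|c|$ on the left piece. First, this is not an isometry of the weighted spaces: with $\widetilde r=|c|^4\,(r\circ\varphi)$ and Jacobian $|c|^2$, the correct modulus of the prefactor is $1/|c|$, not $|c|$ (your choice inflates the norm by $|c|^2$). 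That alone would be survivable, since a bounded isomorphism preserves Riesz bases. What is fatal is that the prefactor must also carry a phase: the paper's map is $\Phi(f)(x)=\tfrac1c f(b-|c|^2(a-x))$ on $[\widetilde a,a)$. With the prefactor $\tfrac1c$, the original conditions $cf(a)=f(b)$ and $f'(a)=\overline c f'(b)$ become exactly the $C^1$-gluing $\Phi(f)(a-)=\tfrac1c f(b)=f(a)=\Phi(f)(a+)$ and $\Phi(f)'(a-)=\tfrac{|c|^2}{c}f'(b)=\overline c f'(b)=f'(a)$, while the new endpoints satisfy $c\,\Phi(f)(\widetilde a)=\Phi(f)(\widetilde b)$ and $\Phi(f)'(\widetilde a)=\overline c\,\Phi(f)'(\widetilde b)$. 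With your real positive prefactor $|c|$ one gets instead $g(a-)=|c|\,f(b)=|c|c\,f(a)$ versus $g(a+)=f(a)$, so $g=Uf$ is discontinuous at the interior junction unless $c|c|=1$; likewise $c\,g(\widetilde a)=c|c|\,g(\widetilde b)$ and $g'(\widetilde a)=|c|^3 g'(\widetilde b)$ do not reproduce the boundary conditions of \eqref{eq:evptilde} for general $c$. Hence your $U$ does not map $\dom(L_{c,0})$ into the domain of the target operator, and the unitary-equivalence argument collapses as stated.

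The repair is small but essential: replace $|\varphi'|^{1/2}$ on the left piece by the complex constant $1/c$ (the differential expression is intertwined by any constant prefactor, so nothing else in your argument changes), and then actually carry out the verification of the transmission condition at $a$ and the endpoint conditions at $\widetilde a,\widetilde b$ rather than asserting that the constants ``must'' work out. With that modification your proof coincides with the paper's.
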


\begin{proof}
First, notice that by Corollary \ref{CorCurgus} it suffices to prove the claim for $q\equiv 0$ and $d=0$.
In this case problem \eqref{eq:evp15} has the form
\begin{equation} \notag 
-f'' = {\lambda} r f
\quad \mbox{on}
\quad [a,b], \qquad
c f(a) = f(b), \quad f'(a) = \overline{c}  f'(b)
\end{equation}
which according to Section~\ref{operators} is represented by the operator $L_{c,0}$
in $L^2_{|r|}[a,b]$ (i.e. $L_{c,d}$ with $d=0$).
Let $\widetilde{L}_{c,0}$ denote the corresponding operator
associated with the shifted eigenvalue problem \eqref{eq:evptilde}
in $L^2_{|\widetilde{r}|}[\widetilde{a},\widetilde{b}]$.
It will be shown that these operators are unitarily equivalent
by means of an Hilbert space isomorphism
$\Phi: L^2_{|r|}[a,b]\rightarrow L^2_{|\widetilde{r}|}[\widetilde{a},\widetilde{b}]$.
To this end, for $f \in L^2_{|r|}[a,b]$ consider the function
\[ 
\Phi (f)(x) :=\begin{cases}
                               \frac{1}{c}\ f(b - |c|^2(a - x)),& x \in [\widetilde{a}, a],\\
                               f(x), & x\in (a,\widetilde{b}] .
                               \end{cases}
\]
Clearly, $\Phi (f) \in L^2_{|\widetilde{r}|}[\widetilde{a},\widetilde{b}]$ and, moreover, we have
\begin{eqnarray*}
\int_{\widetilde{a}}^{\widetilde{b}} |\Phi (f)|^2 |\widetilde{r}| \; dx
&=& \int_{\widetilde{a}}^{a} |c|^2 |f(b - |c|^2(a - x))|^2 |r(b - |c|^2(a - x))| \; dx \\&+& \int_a^{\widetilde{b}} |f|^2 |r| \; dx  
= \int_a^{b} |f|^2 |r| \; dx .
\end{eqnarray*}
It is straightforward to check that for $g \in L^2_{|\widetilde{r}|}[\widetilde{a},\widetilde{b}]$ the inverse map is given by
\[
\Phi^{-1}(g)(x)=\begin{cases}
                               g(x), & x\in [a,\widetilde{b}] ,\\
                               c\ g\bigl(a - \frac{1}{|c|^2}(b - x)\bigr),& x \in (\widetilde{b}, b].
                               \end{cases}
\]
Therefore, $\Phi: L^2_{|r|}[a,b]\rightarrow L^2_{|\widetilde{r}|}[\widetilde{a},\widetilde{b}]$
is isometric and one-to-one.
The next step is to show $\Phi (\dom(L_{c,0})) = \dom(\widetilde{L}_{c,0})$.
Indeed, if $f\in \dom(L_{c,0})$, then the boundary conditions imply
\begin{eqnarray*}
&\Phi (f)(a-)=\frac{1}{c}f(b)=f(a)=\Phi (a +),\\
&\Phi (f)'(a-)=\frac{|c|^2}{c}f'(b)=\overline{c}f'(b)=f'(a)=\Phi (f)'(a+).
\end{eqnarray*}
Hence $\Phi (f)$ and $\Phi (f)'$ are absolutely continuous on $[\widetilde{a},\widetilde{b}]$.
Furthermore, we have
\[
\frac{-1}{\widetilde{r}(x)}\Phi (f)''(x)
= \frac{-f''(b - |c|^2(a - x))}{c \, r(b - |c|^2(a - x))}
= -\Phi\Bigl(\frac{1}{r}f''\Bigr)(x) \quad \mbox{ for a.a. } x \in (\widetilde{a}, a)
\] 
and $\Phi (f)$ satisfies the boundary conditions from \eqref{eq:evptilde}:
\begin{eqnarray*}
&c \Phi (f)(\widetilde{a}) = f(b - |c|^2(a - \widetilde{a})) = f(b - \varepsilon) = \Phi (f)(\widetilde{b}),\\
&\Phi (f)'(\widetilde{a})=\overline{c} f'(b - |c|^2(a - \widetilde{a})) =\overline{c} f'(b - \varepsilon)
=\overline{c}\Phi (f)'(\widetilde{b}).
\end{eqnarray*}
Therefore, $\Phi (f) \in \dom(\widetilde{L}_{c,0})$ and $\widetilde{L}_{c,0}(\Phi (f)) = \Phi (L_{c,0} (f))$.
Similarly, for any $g \in \dom(\widetilde{L}_{c,0})$
we can show that  $\Phi^{-1} (g) \in \dom(L_{c,0})$
and moreover $L_{c,0} (\Phi^{-1} (g))=\Phi^{-1} (\widetilde{L}_{c,0}(g))$.
This implies $L_{c,0} = \Phi^{-1}\widetilde{L}_{c,0}\Phi$.
Consequently, $L_{c,0}$ and $\widetilde{L}_{c,0}$ are unitarily equivalent. This completes the proof since a Hilbert space isomorphism preserves the Riesz basis property.
\end{proof}

Combining Theorem~\ref{ThmPyatkov}
with Lemma~\ref{lem:shift}, we obtain the following result.

\begin{lemma}\label{lem:4.7}
Let $q, r \in L^1[a,b]$ and assume that $r$ has $n$ turning points $x_1,\ldots,x_n$. Assume also that $n$ is odd and $a_1, \ldots, a_n$ and $b_1, \ldots, b_n$ satisfy  \eqref{intervals}.
Let  $\tilde{a}$, $\tilde{b}$ and $\widetilde{r}$ be defined by \eqref{eq:ti_a}, \eqref{eq:ti_r} and such that $b_n < \tilde{b}$. Additionally, let  $a_0, b_0$ be such that $\tilde{a} < a_0 < a < b_0 < a_1$.  Then problem \eqref{eq:evp15} has the Riesz basis property in $L^2_{|r|}[a,b]$ if and only if both of the following conditions are satisfied:
\begin{enumerate}
\item[(i)]
for every $k \in \{1,\ldots, n\}$, the problem in \eqref{separatedLoc} has the Riesz basis property in $L_{|r|}^2[a_k,b_k]$,
\item[(ii)]
the problem
\begin{equation}\label{eq:shiftLoc}
-f''=\lambda \widetilde{r}\, f\quad \text{on}\quad [a_0,b_0],\qquad f(a_0)=f(b_0)=0,
\end{equation}
has the Riesz basis property in $L_{|\tilde r|}^2[a_0,b_0]$.
\end{enumerate}
\end{lemma}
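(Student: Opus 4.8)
The plan is to combine Lemma~\ref{lem:shift} with Pyatkov's Theorem~\ref{ThmPyatkov}, exploiting the fact that the shifted weight $\widetilde r$ on $[\widetilde a,\widetilde b]$ has an \emph{even} number $n+1$ of turning points when $n$ is odd, so that condition (SPeve) applies. First I would invoke Lemma~\ref{lem:shift} to reduce the Riesz basis property of problem \eqref{eq:evp15} on $L^2_{|r|}[a,b]$ to the Riesz basis property of the auxiliary problem \eqref{eq:evptilde} on $L^2_{|\widetilde r|}[\widetilde a,\widetilde b]$. The boundary conditions in \eqref{eq:evptilde} are of the form \eqref{eq:c2.3} (with $d=0$), hence satisfy \eqref{CD}; and as remarked after \eqref{eq:ti_r}, since $n$ is odd, $\widetilde r$ has turning points $x_0=a,x_1,\dots,x_n$, an even number of them, so $\widetilde r$ has the same sign in a neighbourhood of $\widetilde a$ as in a neighbourhood of $\widetilde b$.

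Next I would apply Theorem~\ref{ThmPyatkov} to problem \eqref{eq:evptilde}: condition (SPeve) holds for $\widetilde r$ on $[\widetilde a,\widetilde b]$, so the Riesz basis property of \eqref{eq:evptilde} is equivalent to the Riesz basis property of the $n+1$ local Dirichlet problems
\[
-f''+qf=\lambda\widetilde r f\ \text{on}\ [\alpha_k,\beta_k],\qquad f(\alpha_k)=f(\beta_k)=0,\qquad k=0,1,\dots,n,
\]
one around each turning point $x_k$, with the $\alpha_k,\beta_k$ subject to the interlacing \eqref{intervals} adapted to $\widetilde a<\alpha_0<x_0<\beta_0\le\alpha_1<x_1<\cdots<\beta_n\le\widetilde b$. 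Here I should take $q\equiv 0$ on the shifted interval, which is legitimate by Corollary~\ref{CorCurgus}(i) applied on $[\widetilde a,\widetilde b]$ (note that $\widetilde r$ agrees with $r$ on $[a,\widetilde b]$). The freedom in choosing the splitting points, together with the hypothesis $b_n<\widetilde b$ and $\widetilde a<a_0<a<b_0<a_1$, lets me take $[\alpha_0,\beta_0]=[a_0,b_0]$ for the turning point $x_0=a$, and $[\alpha_k,\beta_k]=[a_k,b_k]$ for $k=1,\dots,n$; on these latter intervals $\widetilde r=r$, so the corresponding local problems are exactly those in \eqref{separatedLoc}. Thus the Riesz basis property of \eqref{eq:evptilde} is equivalent to the conjunction of (i) the Riesz basis property of each problem in \eqref{separatedLoc} on $L^2_{|r|}[a_k,b_k]$, $k=1,\dots,n$, and (ii) the Riesz basis property of \eqref{eq:shiftLoc} on $L^2_{|\widetilde r|}[a_0,b_0]$. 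Chaining this with Lemma~\ref{lem:shift} gives the claim.

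The one point requiring a little care—and the most likely obstacle—is matching the interval conventions: Theorem~\ref{ThmPyatkov} and Corollary~\ref{ThmPyatkov2} assert that the choice of the local intervals $[\alpha_k,\beta_k]$ is immaterial (any admissible choice works), so I must justify that the particular choice forced on me by the data $a_0,b_0,a_1,\dots,b_n$ of the lemma is indeed admissible for the shifted configuration, i.e.\ satisfies the interlacing with $x_0=a$ inserted and with endpoints inside $(\widetilde a,\widetilde b)$. This is immediate from the hypotheses $\widetilde a<a_0<a<b_0<a_1$ and $b_n<\widetilde b$ together with \eqref{intervals} for $a_1,\dots,b_n$. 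A second minor point is that the local problem near $x_0=a$ produced by Theorem~\ref{ThmPyatkov} carries the potential $q$ inherited from the shifted problem, whereas \eqref{eq:shiftLoc} has $q\equiv 0$; this discrepancy is absorbed by Corollary~\ref{CorCurgus}(i) (the Riesz basis property on a fixed interval is independent of $q\in L^1$), applied on $[a_0,b_0]$. With these bookkeeping remarks the proof is a direct assembly of Lemma~\ref{lem:shift}, Theorem~\ref{ThmPyatkov} (via (SPeve)), and Corollary~\ref{CorCurgus}.
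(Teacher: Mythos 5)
Your proof is correct and follows essentially the same route as the paper: reduce via Lemma~\ref{lem:shift} to the shifted problem \eqref{eq:evptilde}, whose weight $\widetilde{r}$ has the even number $n+1$ of turning points, and then apply Theorem~\ref{ThmPyatkov} with condition (SPeve), identifying the local Dirichlet problems at $x_1,\dots,x_n$ with \eqref{separatedLoc} and the one at $x_0=a$ with \eqref{eq:shiftLoc}. The bookkeeping points you flag (admissibility of the prescribed local intervals and the removal of $q$ via Corollary~\ref{CorCurgus}) are exactly the details the paper leaves implicit.
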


\begin{proof}
By Lemma~\ref{lem:shift}, problem \eqref{eq:evp15} has the Riesz basis property if and only if problem \eqref{eq:evptilde}
has the Riesz basis property. Applying Theorem~\ref{ThmPyatkov} to \eqref{eq:evptilde} (with condition (SPeve))
and using \eqref{eq:ti_r},
completes the proof.
\end{proof}

Next, we consider a locally odd-dominated weight $r$.
Under the assumptions that $n$ is odd and $r$ is locally odd at the boundary we can now show that conditions (SPa) and (SPb)
are also necessary if (and only if) $c = \e^{\I t}$, with some $t \in [0, 2\pi)$, in which case the eigenvalue problem \eqref{eq:evp15} is of the form \eqref{nonseparated}.

\begin{proposition}\label{ParfenovNecessary}
Let $q, r \in L^1[a,b]$ and assume that $r$ has $n$ turning points $x_1,\ldots,x_n$.
Assume that $n$ is odd and that
$r$ is locally odd at the boundary
and locally odd-dominated at each turning point $x_k$, $k \in \{1,\ldots, n\}$.
Let $c\in \dC\setminus\{0\}$ and $d\in\dR$.
\begin{enumerate}
\item[(i)] If $|c| \neq 1$, then problem \eqref{eq:evp15}
has the Riesz basis property
if and only if for each $k \in \{1,\ldots, n\}$ the function $I_{x_k}^+$ is positively increasing.
\item[(ii)] If $|c| = 1$, then problem \eqref{eq:evp15}
(i.e. \eqref{nonseparated})
has the Riesz basis property
if and only if the functions $I_a^+$ and $I_{x_k}^+$, $k \in \{1,\ldots, n\}$, are positively increasing.
\end{enumerate}
\end{proposition}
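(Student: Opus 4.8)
The plan is to combine Lemma~\ref{lem:4.7} with the analysis of problem \eqref{eq:shiftLoc} near the extra turning point $x_0 = a$ of $\widetilde r$, treating the cases $|c| \neq 1$ and $|c| = 1$ separately. In both cases, by Lemma~\ref{lem:4.7}, problem \eqref{eq:evp15} has the Riesz basis property if and only if all the local problems \eqref{separatedLoc} at $x_1,\dots,x_n$ have it \emph{and} the shifted local problem \eqref{eq:shiftLoc} at $x_0=a$ has it. By Corollary~\ref{ParfenovSufficient}(i) (applicable since $r$, and hence $\widetilde r$ away from $a$, is locally odd-dominated at each $x_k$), the Riesz basis property of \eqref{separatedLoc} at $x_k$ is equivalent to $I^+_{x_k}$ being positively increasing. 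So the whole statement reduces to identifying when \eqref{eq:shiftLoc} has the Riesz basis property.

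First I would compute the local behavior of $\widetilde r$ on both sides of $x_0=a$. On the right of $a$ we have $\widetilde r(x)=r(x)$, so the relevant integral is $I^+_a$. On the left of $a$, from \eqref{eq:ti_r}, $\widetilde r(x)=|c|^4 r(b-|c|^2(a-x))$; using the hypothesis that $r$ is locally odd at the boundary, i.e. $r(a+y)=-r(b-y)$ for small $y>0$, we get $\widetilde r(a-s)=|c|^4 r\bigl(b-|c|^2 s\bigr)=-|c|^4 r\bigl(a+|c|^2 s\bigr)$ for small $s>0$. Thus $\widetilde r$ restricted to a neighborhood of $a$ is, after the change of variable $s\mapsto |c|^2 s$, exactly a ``scaling perturbation'' of the odd extension of $r|_{[a,a+\varepsilon)}$ in the sense of \eqref{eq:scaling} with $A=|c|^4$ and $B=|c|^2$ (up to the harmless overall constant $|c|^4$, which affects neither the weighted $L^2$-space nor the Riesz basis property of a scaled problem). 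Here I should be careful to reconcile the two scalings: the scaling $B=|c|^2$ inside $r$ and the fact that $\widetilde a$ is placed at distance $\varepsilon/|c|^2$ from $a$, so that the lengths match up; after the isometric substitution $\Phi$ analogous to the one in the proof of Corollary~\ref{ParfenovSufficient}(i), problem \eqref{eq:shiftLoc} becomes a Dirichlet problem on $[-1,1]$ with weight of the form \eqref{eq:scaling}, $A=|c|^4$, $B=|c|^2$.

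For case (i), $|c|\neq 1$: then $A=|c|^4 \neq |c|^2 = B$, so Corollary~\ref{th:AneqB} applies directly and \eqref{eq:shiftLoc} \emph{always} has the Riesz basis property, regardless of the behavior of $r$ near $a$. Hence condition (ii) of Lemma~\ref{lem:4.7} is automatic, and \eqref{eq:evp15} has the Riesz basis property iff each $I^+_{x_k}$ is positively increasing — which is the claimed (i). For case (ii), $|c|=1$: then $A=B=1$, so $\widetilde r$ near $a$ is (a constant multiple of) the genuine odd extension of $r|_{[a,a+\varepsilon)}$, hence $\widetilde r$ is locally odd at $x_0=a$, in particular locally odd-dominated there. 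By Corollary~\ref{ParfenovSufficient}(i) applied to the local problem \eqref{eq:shiftLoc} at $a$, it has the Riesz basis property iff the corresponding $I^+$-function is positively increasing; since $\widetilde r(a+y)=r(a+y)$ for small $y>0$, that function is $I^+_a$, up to the irrelevant scaling. Therefore \eqref{eq:evp15} has the Riesz basis property iff $I^+_a$ and all $I^+_{x_k}$ are positively increasing, which is (ii).

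The main obstacle I anticipate is purely bookkeeping: carefully verifying that the two simultaneous scalings in the definition of $\widetilde a,\widetilde b,\widetilde r$ (the factor $|c|^2$ inside the argument of $r$ and the factor $|c|^{-2}$ in the location of $\widetilde a$) combine so that the change of variables turns the left-hand piece of \eqref{eq:shiftLoc} precisely into the $x\in(-b,0)$ branch of \eqref{eq:scaling} with $A=|c|^4$, $B=|c|^2$, and that the overall multiplicative constant $|c|^4$ truly drops out (it rescales the weight but not the weighted $L^2$-space nor the eigenvalue equation, since $-f''=\lambda(\text{const}\cdot w)f$ just rescales $\lambda$). Once this identification is nailed down, the rest is a direct invocation of Corollary~\ref{th:AneqB} and Corollary~\ref{ParfenovSufficient}(i) via Lemma~\ref{lem:4.7}. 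I also need to note the ``only if'' directions: that positive increase of the $I^+$-functions is \emph{necessary}, which again follows from Corollary~\ref{ParfenovSufficient}(i) applied to each local Dirichlet problem, since each is itself an equivalence.
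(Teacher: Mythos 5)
Your proof is correct and follows essentially the same route as the paper: reduce via Lemma~\ref{lem:4.7} to the local Dirichlet problems, dispose of those at $x_1,\dots,x_n$ by Corollary~\ref{ParfenovSufficient}, and identify $\widetilde r$ near $a$ as a scaling perturbation of the form \eqref{eq:scaling} (with $A=|c|^4$, $B=|c|^2$), so that Corollary~\ref{th:AneqB} settles the case $|c|\neq 1$ and local oddness at $a$ settles the case $|c|=1$. Your explicit computation of the parameters $A$ and $B$ is a useful addition that the paper leaves implicit.
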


\begin{proof}
By Lemma~\ref{lem:4.7}, the Riesz basis property for \eqref{eq:evp15} is equivalent
to the Riesz basis property of problems \eqref{separatedLoc} and \eqref{eq:shiftLoc}.
Since $r$ is locally odd-dominated at $x_k$, by Corollary \ref{ParfenovSufficient},
problems \eqref{separatedLoc} have the Riesz basis property if and only if the functions $I_{x_k}^+$ are positively increasing.

Further, notice that the weight function $\widetilde{r}$ given by \eqref{eq:ti_r}
has the form \eqref{eq:scaling} in a neighborhood of $x=a$
since $r$ is locally odd at the boundary.
If $|c|=1$, then $\widetilde{r}$ is locally odd at $x=a$
and hence \eqref{eq:shiftLoc} has the Riesz basis property
if and only if $I_a^+$ is positively increasing.
This proves (ii).
If $|c|\neq 1$, then by Corollary \ref{th:AneqB}
(isometrically transformed to a neighbourhood of $a$),
problem \eqref{eq:shiftLoc} always has the Riesz basis property.
This completes the proof.
\end{proof}

Finally, we address the case of general self-adjoint boundary conditions \eqref{eq:gbc} satisfying \eqref{CD}. In Subsection~\ref{operators} we showed that the general self-adjoint boundary conditions
can be rewritten as one of the conditions
\eqref{eq:c1},  \eqref{eq:c3}, \eqref{eq:c2.1}, \eqref{eq:c2.2}, or  \eqref{eq:c2.3}.
Combining this classification of self-adjoint boundary conditions
and Corollary~\ref{ParfenovSufficient} (with condition (SPsep)),
Proposition~\ref{ParfenovNecessary_2},  and Proposition~\ref{ParfenovNecessary},
we arrive at the main result of the paper.
Note that here the case of an even number $n$ of turning points is excluded because this is already covered by Corollary~\ref{ParfenovSufficient} (condition (SPeve)).

\begin{theorem}\label{ThnMain}
Let $q, r \in L^1[a,b]$ and assume that $r$ has $n$ turning points $x_1,\ldots,x_n$. Let $n$ be odd and assume that that $r$ is locally odd-dominated at each turning point $x_k$, $k \in \{1,\ldots, n\}$,
and locally odd at the boundary.
\begin{enumerate}
\item[(i)] If the boundary conditions \eqref{eq:gbc} can be rewritten as
\eqref{eq:c1},  \eqref{eq:c3}, \eqref{eq:c2.1}, \eqref{eq:c2.2}, or \eqref{eq:c2.3}
with $|c| \neq 1$, then problem \eqref{eq:sp}, \eqref{eq:gbc}
has the Riesz basis property
if and only if for all $k \in \{1,\ldots, n\}$ the functions $I_{x_k}^+$ defined by \eqref{eq:i_pm} are positively increasing.
\item[(ii)] If the boundary conditions \eqref{eq:gbc} can be rewritten as
\eqref{eq:c2.3} with $|c| = 1$ (i.e. as in \eqref{nonseparated})
then problem \eqref{eq:sp}, \eqref{eq:gbc}
has the Riesz basis property
if and only if the functions $I_a^+$
(or equivalently $I_b^-$)
and $I_{x_k}^+$ for all $k \in \{1,\ldots, n\}$ are positively increasing.
\end{enumerate}
\end{theorem}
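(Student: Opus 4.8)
The plan is to assemble Theorem~\ref{ThnMain} from the classification of self-adjoint boundary conditions in Subsection~\ref{operators} together with the three ``building block'' results already established: Corollary~\ref{ParfenovSufficient}(ii) (which handles the separated case via condition (SPsep)), Proposition~\ref{ParfenovNecessary_2}(ii) (the nonseparated case \eqref{eq:c1}), and Proposition~\ref{ParfenovNecessary} (the nonseparated case \eqref{eq:c2.3}). First I would recall that by Subsection~\ref{operators} every pair $\mC,\mD$ satisfying \eqref{CD} is row equivalent to one of the five normal forms \eqref{eq:c1}, \eqref{eq:c3}, \eqref{eq:c2.1}, \eqref{eq:c2.2}, \eqref{eq:c2.3}, and that by Corollary~\ref{CorCurgus} and Subsection~\ref{sub:conclusion} the Riesz basis property is independent of $q\in L^1[a,b]$, of the choice of $\mB=\mB^*$ in \eqref{eq:c1}, and of $d\in\dR$ in \eqref{eq:c2.3}; so it is no loss to work with the reduced forms.

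Next I would split into the two cases of the statement. For part (i): if the boundary conditions reduce to \eqref{eq:c3}, \eqref{eq:c2.1}, or \eqref{eq:c2.2}, these are separated, so condition (SPsep) of Theorem~\ref{ThmPyatkov} applies and Corollary~\ref{ParfenovSufficient}(ii) gives directly that the Riesz basis property holds iff every $I^+_{x_k}$ is positively increasing. If they reduce to \eqref{eq:c1}, I invoke Proposition~\ref{ParfenovNecessary_2}(ii) for the same conclusion. If they reduce to \eqref{eq:c2.3} with $|c|\neq 1$, I invoke Proposition~\ref{ParfenovNecessary}(i). In every sub-case the criterion is the same --- positive increase of all $I^+_{x_k}$, with no extra boundary condition --- which is exactly assertion (i). For part (ii), the boundary conditions reduce to \eqref{eq:c2.3} with $|c|=1$, i.e. to \eqref{nonseparated}, and Proposition~\ref{ParfenovNecessary}(ii) gives that the Riesz basis property holds iff $I^+_a$ and all $I^+_{x_k}$ are positively increasing. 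The equivalence ``$I^+_a$ positively increasing $\iff$ $I^-_b$ positively increasing'' follows because $r$ is locally odd at the boundary: by Definition~\ref{def:locodd} there is $\varepsilon>0$ with $r(a+x)=-r(b-x)$ for a.a.\ $x\in(0,\varepsilon)$, hence $I^+_a(\mu)=\int_a^{a+\mu}|r|\,dt=\int_{b-\mu}^b|r|\,dt=I^-_b(\mu)$ for $\mu\in[0,\varepsilon]$, so the two functions agree near $0$ and positive increase is a purely local property at $0$.

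The genuine content has already been extracted into Corollary~\ref{ParfenovSufficient}, Proposition~\ref{ParfenovNecessary_2} and Proposition~\ref{ParfenovNecessary}; what remains here is bookkeeping. The one point that deserves care --- and is the only place I expect any friction --- is verifying that the hypotheses of those three results are genuinely met under the standing assumptions of Theorem~\ref{ThnMain}: namely that ``$n$ odd, $r$ locally odd at the boundary, $r$ locally odd-dominated at each $x_k$'' is exactly the hypothesis package of Proposition~\ref{ParfenovNecessary}, and that for the separated and \eqref{eq:c1} cases the weaker hypotheses (only local odd-domination at the turning points, plus one of (SPsep)/(SPeve)/(SPa)/(SPb)) needed by Corollary~\ref{ParfenovSufficient}(ii) and Proposition~\ref{ParfenovNecessary_2}(ii) are subsumed. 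I would also note explicitly that the even-$n$ situation is deliberately omitted because it is already handled by Corollary~\ref{ParfenovSufficient}(ii) through condition (SPeve). With these identifications in place the theorem follows by simply collecting the cases.
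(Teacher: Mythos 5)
Your proposal is correct and follows essentially the same route as the paper: the authors likewise assemble the theorem from the classification of boundary conditions in Subsection~\ref{operators} together with Corollary~\ref{ParfenovSufficient} (condition (SPsep)), Proposition~\ref{ParfenovNecessary_2}, and Proposition~\ref{ParfenovNecessary}, noting that the even-$n$ case is covered by (SPeve). Your explicit verification that $I^+_a=I^-_b$ near $0$ via the local oddness of $r$ at the boundary is a detail the paper leaves implicit, but it is accurate.
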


\begin{corollary}\label{cor:similar}
Let $q, r \in L^1[a,b]$ and assume that $r$ has $n$ turning points.
\begin{enumerate}
\item[(i)]
Consider the family of all eigenvalue problems consisting of the equation  \eqref{eq:sp} with an arbitrary $q\in L^1[a,b]$ and subject to any of the boundary conditions in \eqref{eq:c1}, \eqref{eq:c3}, \eqref{eq:c2.1}, \eqref{eq:c2.2}, and \eqref{eq:c2.3} with $|c| \neq 1$.  Then either every such eigenvalue problem has the Riesz basis property or neither of them has the Riesz basis property.
\item[(ii)]
Consider the family of eigenvalue problems \eqref{nonseparated} with an arbitrary potential $q\in L^1[a,b]$ and arbitrary $t \in [0,2\pi)$ and $d \in \dR$. Then either every such eigenvalue problem has the Riesz basis property or neither of them has the Riesz basis property.
\end{enumerate}
\end{corollary}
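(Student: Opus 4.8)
The plan is to read each dichotomy off the characterizations already obtained, the point being that in every case the Riesz basis property is governed by a condition that mentions only the weight $r$, not the boundary condition (for (i)) and not $t$ (for (ii)). The first move is a normalization: by Corollary~\ref{CorCurgus}, the Riesz basis property of \eqref{eq:sp}, \eqref{eq:gbc} does not depend on $q\in L^1[a,b]$, is independent of $\mB=\mB^*$ within the family \eqref{eq:c1}, and is independent of $d\in\dR$ within the family \eqref{eq:c2.3}. Hence in (i) I may fix one representative of each of the five listed boundary-condition types (say $q\equiv0$, $\mB=0$, $d=0$), and in (ii) I may compare the problems \eqref{nonseparated} with $q\equiv0$, $d=0$ as $t$ runs over $[0,2\pi)$.

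Next I would split on the parity of $n$. If $n$ is even, condition (SPeve) of Theorem~\ref{ThmPyatkov} is satisfied, so that theorem applies to \eqref{eq:sp}, \eqref{eq:gbc} for every pair $C,D$ obeying \eqref{CD}: the Riesz basis property holds if and only if each of the local Dirichlet problems \eqref{separatedLoc}, $k=1,\dots,n$, has it. As this condition ignores the boundary condition entirely, both (i) and (ii) follow in the even case. If $n$ is odd, then for (i) I would invoke Theorem~\ref{ThnMain}(i) — whose hypotheses ($r$ locally odd-dominated at each turning point and locally odd at the boundary) are in force: for each boundary condition listed there, i.e. \eqref{eq:c1}, \eqref{eq:c3}, \eqref{eq:c2.1}, \eqref{eq:c2.2} and \eqref{eq:c2.3} with $|c|\neq1$, the Riesz basis property holds iff $I^+_{x_1},\dots,I^+_{x_n}$ are all positively increasing, and this criterion depends only on $r$ near its turning points, giving the dichotomy. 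For (ii) with $n$ odd I would argue directly: by Lemma~\ref{lem:4.7} the Riesz basis property of \eqref{nonseparated} (i.e. \eqref{eq:c2.3} with $c=\e^{\I t}$) is equivalent to the Riesz basis property of the local problems \eqref{separatedLoc} together with that of \eqref{eq:shiftLoc}; since $|c|=1$ the weight $\widetilde r$ of \eqref{eq:ti_r} — hence the problem \eqref{eq:shiftLoc} — does not depend on $t$, and neither do the problems \eqref{separatedLoc}, so the Riesz basis property of \eqref{nonseparated} is $t$-independent. (Alternatively, under the assumptions of Theorem~\ref{ThnMain} one may simply quote part~(ii) there, whose criterion — $I_a^+$ and all $I^+_{x_k}$ positively increasing — is manifestly $t$-independent.)

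The step that carries the content, and the one I would expect to be the main obstacle to state cleanly, is part~(i) in the odd case. Passing to the shifted problem \eqref{eq:evptilde} turns $x_0:=a$ into an additional, $(n+1)$-st turning point, so a priori the Riesz basis property of \eqref{eq:c2.3} acquires one further local requirement — namely that the problem \eqref{eq:shiftLoc} near $a$ have the Riesz basis property. What makes the dichotomy work is the restriction $|c|\neq1$ together with local oddness at the boundary: under these, $\widetilde r$ has near $a$ the scaling form \eqref{eq:scaling} with $A=|c|^4\neq|c|^2=B$, so \eqref{eq:shiftLoc} automatically has the Riesz basis property by Corollary~\ref{th:AneqB}; consequently no condition on $r$ near the boundary survives and only the turning-point conditions remain. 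This is exactly what breaks down when $|c|=1$, and it is the reason the periodic and antiperiodic problems are grouped into (ii), where the weaker $t$-independence of $\widetilde r$ yields only the weaker dichotomy.
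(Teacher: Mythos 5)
Your proposal is correct and follows the route the paper intends: Corollary~\ref{cor:similar} is stated without proof as a direct consequence of Theorem~\ref{ThnMain} (odd $n$, nonseparated conditions) together with Theorem~\ref{ThmPyatkov} under (SPsep) and (SPeve), which is exactly how you organize the argument. Two points are worth recording. First, your direct treatment of part (ii) in the odd case via Lemma~\ref{lem:4.7} --- noting that the auxiliary weight $\widetilde r$ of \eqref{eq:ti_r} is independent of $t$ when $|c|=1$, so that the criterion ``local problems \eqref{separatedLoc} plus \eqref{eq:shiftLoc}'' does not see $t$ or $d$ --- is a small but genuine improvement over quoting Theorem~\ref{ThnMain}(ii), since it needs neither local odd-domination at the turning points nor local oddness at the boundary; part (ii) thus holds for arbitrary $q,r\in L^1[a,b]$. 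Second, you correctly isolate where the content of part (i) lies in the odd case: the reduction of \eqref{eq:c2.3} with $|c|\neq 1$ to the local problems rests on Corollary~\ref{th:AneqB} applied to \eqref{eq:shiftLoc} with $A=|c|^4\neq|c|^2=B$, and this step genuinely requires $r$ to be locally odd at the boundary (otherwise $\widetilde r$ near $a$ is not of the form \eqref{eq:scaling} and nothing guarantees that \eqref{eq:shiftLoc} has the Riesz basis property). That hypothesis, inherited from Theorem~\ref{ThnMain}, is not repeated in the corollary's statement; so your proof --- like the paper's implicit one --- establishes part (i) for odd $n$ only under the standing assumptions of Theorem~\ref{ThnMain}. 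This is a defect of the statement rather than of your argument, but it would be worth saying explicitly that part (i) is asserted under those hypotheses (the even-$n$ case and part (ii) being unconditional).
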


The following example concludes this section.

\begin{example}\label{ex:periodic_2} 
We consider an eigenvalue problem with periodic boundary conditions:  \begin{equation}\label{eq:periodic}
-f'' = {\lambda} r f
\quad \mbox{on} \quad [-1,1], \qquad
f(-1) = f(1), \quad f'(-1) = f'(1).
\end{equation}
We additionally assume that $r$ is odd, $r(x)=-r(-x)$ for a.a.  $x\in (-1,1)$.
Notice that this is the eigenvalue problem \eqref{nonseparated} with
$a= -1$, $b = 1$, $t=0$, $d=0$, $q = 0$, or \eqref{eq:c2.3} with $c=1, d=0$.

Clearly, the constant function $f_0 = 1$ is an eigenfunction of \eqref{eq:periodic} which corresponds to the eigenvalue $0$. Observe also that the eigenvalue $0$ has a Jordan chain since $\int_{-1}^1 r\, dx=0$. Indeed, in this case the function
\[
g_0(x) := \gamma(x+1) -  \int_{-1}^x (x-t)\; r(t) \, dt \quad \mbox{with} \quad
\gamma:= -\frac{1}{2}\int_{-1}^1t\, r(t) \;  dt
\]
satisfies the boundary conditions
\[
g_0(1) = 0 = g_0(-1), \quad g_0'(1) = \gamma = g_0'(-1)
\]
and $\ell[g_0] = -\frac{1}{r}g_0'' = 1 = f_0$.
Thus $g_0$ is a root function of the eigenvalue problem \eqref{eq:periodic}.

By Theorem~\ref{ThnMain} (or Proposition~\ref{ParfenovNecessary})
the eigenvalue problem \eqref{eq:periodic} has the Riesz basis property
if and only if both functions $I_0^+$ and $I_1^-$
(i.e., $I_x^\pm$ with $x=0,1$)
are positively increasing.
We illustrate this with two specific examples of the weight function $r$.

{\rm (i)} \ We
start with the simplest example: $r(x) = \sgn(x)$, $x \in [-1,1]$.  Then $I_0^+(x) =I_1^-(x)= x$.  Clearly, these functions are positively increasing (moreover, they are regularly varying functions at $0$ in the sense of Karamata with index $\rho=1$, see \cite{BGT}). Therefore, the eigenvalue problem \eqref{eq:periodic} has the Riesz basis property in this case.
Note that the root functions $f_0$ and $g_0$ associated with the eigenvalue 0 are
\[
f_0(x) = 1, \qquad g_0(x)=
\frac{x(1-|x|)}{2}.
\]

{\rm (ii)} \ Consider now the following example:
\begin{equation}\label{eq:r=log}
r(x)=\frac{\sgn(x)}{(1-|x|)\log^2(\frac{1-|x|}{\e})},
\quad  x \in (-1,1) .
\end{equation}
Then $r$ is odd, $r(0+)=1$ and hence for all $t \in (0,1)$
by l'H\^opital's rule we have
\[
\lim_{x\searrow 0}\frac{I_0^+(tx)}{I_0^+(x)}
= \lim_{x\searrow 0}\frac{t\, r(tx)}{r(x)} = t\,\frac{r(0+)}{r(0+)}=t.
\]
Therefore, the function $I_0^+$ is positively increasing.
On the other hand,
\[
I_1^-(x)=\int_{1-x}^1 \frac{1}{(1-t)\log^2(\frac{1-t}{\e})}dt=\frac{1}{1-\log(x)}
\]
and hence we have
\[
\lim_{x\searrow 0}\frac{I_1^-(xt)}{I_1^-(x)}=\lim_{x\searrow  0}\frac{1-\log(x)}{1-\log(x) - \log(t)}=1,
\]
for all $t \in (0,1)$.
Consequently, $I_1^-$ is a slowly varying function in the sense of Karamata \cite{BGT} and hence not positively increasing.
Thus, the periodic eigenvalue problem \eqref{eq:periodic} with $r$ given in \eqref{eq:r=log} does not have the Riesz basis property.

However, the eigenvalue problem \eqref{separated:-1,1}, with Dirichlet boundary conditions and $r$ from \eqref{eq:r=log}, does have the Riesz basis property by Theorem~\ref{th:parfenov}.
Similarly, by Proposition~\ref{ParfenovNecessary}, for any $c\in \mathbb{C}$ with $|c|\neq 1$, the Riesz basis property also holds for the following problem
\[
-f'' = {\lambda} r f
\quad \mbox{on} \quad [-1,1], \quad
cf(-1) = f(1), \quad f'(-1) = \overline{c} f'(1).
\]
\end{example}

\section{Periodic boundary conditions and the regular HELP-type inequality}

Consider the regular HELP-type inequality \eqref{Everitt}
with $b > 0$ and $q, r \in L^1[0,b]$ such that $r > 0$ a.e. on $[0,b]$.
We say that {\it inequality \eqref{Everitt} is valid} if
there exists a constant $K > 0$ such that
(\ref{Everitt}) holds true for all functions
$f\in AC[0,b]$
for which $\frac{1}{r}f' \in AC[0,b]$
and
$\ell[f]:=-(\frac{1}{r} f')' + q f \in L^2[0,b]$.

\subsection{Bennewitz' theorem}

In \cite{Bz} Bennewitz proved the following result (in a more general setting and with a different terminology) on the validity of inequality \eqref{Everitt}.

\begin{theorem}[Bennewitz]\label{ThmBennewitz}
Let $q, r \in L^1[0,b]$ be such that $r > 0$ a.e.
Then, inequality \eqref{Everitt} is valid
if and only if both of the following conditions are satisfied:
\begin{enumerate}
\item[(i)] The functions $I_0^+$ and $I_b^-$ given by \eqref{eq:i_pm} are positively increasing.
\item[(ii)] For all solutions $f$ of $\ell[ f] = 0$ we have  $(\frac{1}{r}f')(b)\overline{f(b)} - (\frac{1}{r}f')(0)\overline{f(0)} = 0$.
\end{enumerate}
\end{theorem}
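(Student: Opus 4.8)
The plan is to obtain this within the present paper's framework, taking for granted the known correspondence between regular HELP-type inequalities and Riesz basis properties. First I would pass from $[0,b]$ to $[-b,b]$ by odd extension: set $\widehat r(-x):=-r(x)$ for $x\in(0,b]$ and let $\widehat q\in L^1[-b,b]$ be any extension of $q$ (immaterial below, by Corollary~\ref{CorCurgus}(i)). Then $x\widehat r(x)>0$ a.e., $\widehat r$ has the single turning point $x_1=0$ (so $n=1$ is odd), and $\widehat r$ is odd, hence both locally odd-dominated at $0$ and locally odd at the boundary; thus the hypotheses of Theorem~\ref{ThnMain} hold. The functions \eqref{eq:i_pm} attached to $\widehat r$ on $[-b,b]$ are, for $I^+_{-b}$ and $I^+_0$, exactly $I^-_b$ and $I^+_0$ for the original $r$. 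Hence Theorem~\ref{ThnMain}(ii), applied to problem \eqref{nonseparated} for $(\widehat q,\widehat r)$ on $[-b,b]$ (the periodic problem when $t=d=0$), says that this problem has the Riesz basis property if and only if condition~(i) holds.

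It remains to show that \eqref{Everitt} is valid if and only if problem \eqref{nonseparated} for $(\widehat q,\widehat r)$ has the Riesz basis property and condition~(ii) holds. Necessity of (ii) is immediate: if $\ell[f]=0$, integration by parts gives $\int_0^b\bigl(\tfrac{1}{r}|f'|^2+q|f|^2\bigr)\,dx=\bigl(\tfrac{1}{r}f'\bigr)(b)\overline{f(b)}-\bigl(\tfrac{1}{r}f'\bigr)(0)\overline{f(0)}$, while the right-hand side of \eqref{Everitt} for this single $f$ vanishes, so this boundary form must vanish. For the remaining equivalence I would build on the correspondence of \cite{V, BF2, K2, K3} identifying the \emph{restricted} inequality --- \eqref{Everitt} for $f$ with $\bigl(\tfrac{1}{r}f'\bigr)(b)=0$ --- with the regularity of $\infty$ as a critical point, hence (Proposition~\ref{ThmCurgusNaiman}) with the Riesz basis property, of the Dirichlet problem \eqref{separated} for $\widehat r$ on $[-b,b]$. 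One then has to show that dropping the restriction $\bigl(\tfrac{1}{r}f'\bigr)(b)=0$ corresponds, under the odd-extension dictionary, exactly to replacing the Dirichlet conditions at $\pm b$ by the periodic gluing $f(-b)=f(b)$, $f'(-b)=f'(b)$, i.e. by \eqref{nonseparated}; and that the only genuinely new, $q$-dependent obstruction so created is the finite-dimensional one carried by $\ker\ell$, namely condition~(ii). That such a finite obstruction is unavoidable for periodic-type conditions is illustrated by Example~\ref{ex:periodic_2} and by the antiperiodic example of \cite{BC2}.

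The heart of the matter --- and the step I expect to be the main obstacle --- is this last passage from the restricted to the unrestricted inequality. Two points must be settled: (a) that the boundary form $\bigl[\tfrac{1}{r}f'\overline f\bigr]_0^b$, transported through the odd extension, makes the glued endpoints $\pm b$ behave like an additional turning-point-type site governed by $I^-_b$ --- which is precisely why Theorem~\ref{ThnMain}(ii) produces the second half of condition~(i), the part that is absent in the restricted (Dirichlet) case where Parfenov's theorem only sees $I^+_0$; and (b) that the ``infinite'' part of \eqref{Everitt} (high-frequency test functions, governed by regularity of $\infty$ as a critical point and hence by condition~(i)) together with its ``finite'' part (test functions close to $\ker\ell$, governed by condition~(ii)) exhaust the ways in which \eqref{Everitt} can fail. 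Should this bookkeeping become unwieldy, the fallback is to reproduce Bennewitz' original $m$-function argument from \cite{Bz} directly, condition~(i) emerging from the growth of the Titchmarsh--Weyl function of $\ell$ at infinity and condition~(ii) from its behaviour at the origin.
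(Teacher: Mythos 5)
There is a genuine gap, and it is worth noting first that the paper does not prove this statement at all: Theorem~\ref{ThmBennewitz} is imported verbatim from Bennewitz \cite{Bz}, where it is established by a direct analysis of the inequality (essentially via the Titchmarsh--Weyl $m$-function), and the paper then \emph{uses} it together with Theorem~\ref{ThnMain} to deduce Corollary~\ref{BennewitzNonseparated}. Your proposal runs this derivation backwards. The first half of your argument --- odd extension, Theorem~\ref{ThnMain}(ii) identifying the Riesz basis property of the periodic problem \eqref{eq:periodic_b} with condition~(i), and the integration-by-parts proof that (ii) is necessary --- is correct. But after that step, what remains to be shown is exactly the equivalence ``inequality \eqref{Everitt} is valid $\Longleftrightarrow$ periodic problem has the Riesz basis property and (ii) holds,'' which modulo Theorem~\ref{ThnMain} is logically the same assertion as Theorem~\ref{ThmBennewitz} itself (it is Corollary~\ref{BennewitzNonseparated}, which the paper obtains \emph{from} Bennewitz's theorem). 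You have therefore not reduced the problem; you have restated it, and the only part you actually prove is the easy necessity of~(ii).

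The passage you yourself flag as ``the heart of the matter'' --- that dropping the restriction $(f'/r)(b)=0$ corresponds under the odd-extension dictionary to replacing Dirichlet by periodic gluing, and that the ``finite'' obstruction is exhausted by $\ker\ell$ --- is precisely the content that needs a proof and receives none. The cited correspondences in \cite{V, BF2, K2, K3} concern the \emph{restricted} inequality and the Dirichlet problem; nothing in the present paper or in those references supplies, as a formal consequence, the unrestricted version. Note also that Lemma~\ref{q=0} shows the unrestricted inequality is never valid when $q=0$, which is exactly the regime in which the $q=0$ results of \cite{V, BF2} live, so the extension to the unrestricted case genuinely requires the $q$-dependent analysis of the boundary form near $\ker\ell$ --- i.e., Bennewitz's own argument. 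Your ``fallback'' of reproducing the $m$-function proof from \cite{Bz} is in fact the only viable route here, and it is not carried out.
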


\begin{example}\label{ExBennewitz}
Bennewitz observed in \cite{Bz1} that inequality (\ref{Everitt}) is valid
for $r=1, \, q=-1$ and $b=m\pi$ with $m \in \dN$. Indeed, in this case we have
$I_0^+(x)=I_b^-(x)
= x$ 
and the solutions $\sin(x)$ and $\cos(x)$ of $f'' + f = 0$ clearly satisfy condition (ii).
\end{example}

\subsection{On a connection with the Riesz basis property}\label{sec:bennezitz}

In \cite{EE1, EE2, Et}
Evans and Everitt studied inequality (\ref{Everitt})
for a smaller class of functions restricted by an additional condition
at the boundary $b$, for example $(f'/r)(b) = 0$.
For the case $q=0, b=1$ it was shown in \cite[Theorem 3.13]{BF2}
that inequality (\ref{Everitt})
with the restriction $(f'/r)(1) = 0$
is equivalent to the Riesz basis property of problem
\eqref{separated:-1,1}
with an odd extension of $r$ to $[-1,1]$ (see also \cite{K2} and \cite{K3}, where this result was extended to a wider setting by using a different approach).

Next we establish an analogous result for inequality \eqref{Everitt} in Bennewitz' sense, i.e. without any restriction at $b$. To this end take again the odd extension of $r$ to $[-b,b]$ and an arbitrary extension $q \in L^1[-b,b]$ of $q$.  Then $r$ has a single turning point at $x_1 = 0$ and, as in Example~\ref{ex:periodic_2}, the Riesz basis property of
the periodic problem
\begin{equation}\label{eq:periodic_b}
-f'' = {\lambda} r f
\quad \mbox{on} \quad [-b,b], \qquad
f(-b) = f(b), \quad f'(-b) = f'(b)
\end{equation}
(i.e. \eqref{eq:c2.3} with $c=1, d=0$)
is equivalent by Theorem \ref{ThnMain} to condition (i) of Theorem~\ref{ThmBennewitz}.
Therefore, Theorem~\ref{ThmBennewitz} implies the following result.

\begin{corollary}\label{BennewitzNonseparated}
Let $q, r \in L^1[-b,b]$ with $x r(x) > 0$ for a.a. $x \in [-b,b]$ and assume that $r$ is odd.
Then inequality \eqref{Everitt} is valid
if and only if condition {\rm (ii)} of Theorem~{\rm\ref{ThmBennewitz}}
is satisfied and problem
\eqref{eq:periodic_b}
has the Riesz basis property
in $L^2_{|r|}[-b,b]$.
\end{corollary}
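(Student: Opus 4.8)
The plan is to derive Corollary~\ref{BennewitzNonseparated} as a direct consequence of Bennewitz' Theorem~\ref{ThmBennewitz} together with the equivalence, established earlier, between the Riesz basis property of the periodic problem \eqref{eq:periodic_b} and condition (i) of Theorem~\ref{ThmBennewitz}. Concretely, by Theorem~\ref{ThmBennewitz}, inequality \eqref{Everitt} is valid if and only if \emph{both} condition (i) (the functions $I_0^+$ and $I_b^-$ are positively increasing) and condition (ii) (the bilinear boundary identity for solutions of $\ell[f]=0$) hold. So it suffices to show that, under the standing hypothesis that $r$ is odd and $xr(x)>0$ on $[-b,b]$, condition (i) of Theorem~\ref{ThmBennewitz} is equivalent to the Riesz basis property of \eqref{eq:periodic_b} in $L^2_{|r|}[-b,b]$.

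For that equivalence I would argue exactly as indicated in the paragraph preceding the corollary: the odd extension of $r$ to $[-b,b]$ has a single turning point $x_1=0$, and $r$ is trivially locally odd at the boundary (since $r(-b+x)=-r(b-x)$ for all $x$, by oddness) and locally odd-dominated (indeed locally odd) at $x_1=0$. The periodic boundary conditions in \eqref{eq:periodic_b} are precisely \eqref{eq:c2.3} with $c=1$, hence $|c|=1$, so Theorem~\ref{ThnMain}(ii) applies with $n=1$: the Riesz basis property of \eqref{eq:periodic_b} holds if and only if both $I_a^+=I_{-b}^+$ (equivalently $I_b^-$) and $I_{x_1}^+=I_0^+$ are positively increasing. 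This is exactly condition (i) of Theorem~\ref{ThmBennewitz}. (One should note that $I_a^+$ in the notation of \eqref{eq:i_pm} with $a=-b$ is $\int_{-b}^{-b+\mu}|r|\,dt$; by oddness of $r$ this equals $\int_{b-\mu}^{b}|r|\,dt=I_b^-(\mu)$, which reconciles the two statements and justifies the parenthetical ``or equivalently $I_b^-$'' in Theorem~\ref{ThnMain}(ii).)

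Putting the pieces together: inequality \eqref{Everitt} is valid $\iff$ [condition (i) holds and condition (ii) holds] $\iff$ [the periodic problem \eqref{eq:periodic_b} has the Riesz basis property and condition (ii) holds], which is the assertion of the corollary. I do not anticipate any serious obstacle here, since all the analytic content is already in Theorem~\ref{ThmBennewitz} and Theorem~\ref{ThnMain}; the only point requiring a line of care is the bookkeeping identification of $I_a^+$ with $I_b^-$ via the oddness of $r$, and checking that the hypotheses of Theorem~\ref{ThnMain}(ii) (odd number of turning points — here $n=1$ — locally odd at the boundary, locally odd-dominated at each turning point) are met, which is immediate from oddness. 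Thus the proof is essentially a one-paragraph citation-and-assembly argument.
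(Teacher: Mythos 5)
Your proposal is correct and follows essentially the same route as the paper: the paper also obtains the corollary by combining Theorem~\ref{ThmBennewitz} with the observation (via Theorem~\ref{ThnMain}(ii), since the odd extension has the single turning point $x_1=0$, is locally odd at the boundary, and the periodic conditions are \eqref{eq:c2.3} with $c=1$) that condition (i) of Theorem~\ref{ThmBennewitz} is equivalent to the Riesz basis property of \eqref{eq:periodic_b}. Your extra line reconciling $I_{-b}^{+}$ with $I_b^{-}$ through oddness is exactly the bookkeeping the paper leaves implicit in the parenthetical ``or equivalently $I_b^-$''.
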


\begin{remark}\label{6replace17}
According to Corollary~\ref{cor:similar}
the eigenvalue problem \eqref{eq:periodic_b}
in Corollary~\ref{BennewitzNonseparated}
can be replaced by any other eigenvalue problem
of the form \eqref{nonseparated} (e.g. with antiperiodic boundary conditions).
\end{remark}

\begin{example}\label{ExBennewitz2}
It follows from the observation in Example~\ref{ExBennewitz} and Corollary~\ref{BennewitzNonseparated}
that the eigenvalue problem
\[
-f'' - f = {\lambda} \, \sgn(x) f, 
\]
subject to boundary conditions 
\[
 \e^{\I t} f(-m\pi) = f(m\pi), \quad f'(m\pi) =  \e^{-\I t}  f'(m\pi) + d \, f(-m\pi)
\]
with $t \in [0, 2\pi)$ and $d \in \dR$ has the Riesz basis property. On the other hand, this fact also follows from considerations in Example~\ref{ex:periodic_2}(i).
\end{example}

\begin{lemma}\label{q=0}
Let $r \in L^1[0,b]$ be such that $r > 0$ a.e. If $q=0$, then condition {\rm (ii)} of Theorem~{\rm\ref{ThmBennewitz}} and hence inequality \eqref{Everitt} cannot be valid.
\end{lemma}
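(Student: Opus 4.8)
The plan is to prove the contrapositive-flavored statement directly: with $q=0$, condition (ii) of Theorem~\ref{ThmBennewitz} fails, and hence by that theorem inequality \eqref{Everitt} cannot be valid. So the entire content reduces to exhibiting solutions $f$ of $\ell[f]=0$ for which the boundary sesquilinear form $(\frac1r f')(b)\overline{f(b)} - (\frac1r f')(0)\overline{f(0)}$ is nonzero.

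First I would write down the solution space of $\ell[f]= -(\frac1r f')' = 0$ when $q=0$. Integrating once gives $\frac1r f' \equiv c_1$ (a constant), so $f' = c_1 r$ and hence $f(x) = c_0 + c_1 \int_0^x r(t)\,dt$, where $c_0,c_1\in\dC$ are arbitrary. Set $R:=\int_0^b r(t)\,dt$; note $R>0$ since $r>0$ a.e. and $b>0$. For such an $f$ we have $(\frac1r f')(x)=c_1$ identically, $f(0)=c_0$, and $f(b)=c_0+c_1 R$. Therefore
\[
\Big(\tfrac1r f'\Big)(b)\overline{f(b)} - \Big(\tfrac1r f'\Big)(0)\overline{f(0)}
= c_1\big(\overline{c_0}+\overline{c_1}R\big) - c_1\overline{c_0}
= |c_1|^2 R.
\]
Choosing $c_1\neq 0$ (e.g.\ $c_0=0$, $c_1=1$, i.e.\ $f(x)=\int_0^x r\,dt$) makes this equal to $R\neq 0$, so condition (ii) fails. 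Consequently, by Theorem~\ref{ThmBennewitz} (the ``only if'' direction, since (ii) is necessary for validity), inequality \eqref{Everitt} is not valid.

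There is essentially no obstacle here; the only thing to be slightly careful about is making sure the chosen $f$ legitimately lies in the class over which inequality \eqref{Everitt} is required to hold — but it does: $f=\int_0^x r\,dt\in AC[0,b]$ since $r\in L^1[0,b]$, $\frac1r f'\equiv 1\in AC[0,b]$, and $\ell[f]=0\in L^2[0,b]$. (Equally, the statement can be phrased purely at the level of Theorem~\ref{ThmBennewitz}, since condition (ii) is an intrinsic condition on the $q=0$ equation, and we have just shown it is violated.) So the proof is a one-line computation: solving $\ell[f]=0$, evaluating the boundary form on $f(x)=\int_0^x r\,dt$, and observing the result is $\int_0^b r\,dx\neq 0$.
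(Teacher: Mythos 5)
Your proof is correct and is essentially identical to the paper's: both solve $(\tfrac1r f')'=0$ explicitly to get $f=c_0+c_1\int_0^x r\,dt$ and compute the boundary form to be $|c_1|^2\int_0^b r\,dt$, which is nonzero for $c_1\neq 0$ since $r>0$ a.e. The extra check that your test function lies in the admissible class is a harmless addition not present in the paper.
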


\begin{proof}
The general solution of $(\frac{1}{r} f')'=0$ is given by
$
f(x)=c_1+c_2\int_0^xr \; dt
$
with $c_1,c_2\in\dC$.
Therefore,
$
(\frac{1}{r}f')(b)\overline{f(b)} - (\frac{1}{r}f')(0)\overline{f(0)}
=|c_2|^2\int_{0}^br \, dt
$
which cannot vanish for $c_2 \neq 0$.
\end{proof}

\begin{remark}
In \cite[Corollary~3.15]{BF2}
it was shown that inequality \eqref{Everitt} (with $b = 1$) is valid
if and only if $I^-_1$ is positively increasing,
condition (ii) of Theorem~\ref{ThmBennewitz} is valid
and problem \eqref{separated:-1,1} has the Riesz basis property.
However, in \cite{BF2} no potential was considered (i.e. $q=0$)
and Lemma~\ref{q=0} was overlooked.
Now by Corollary~\ref{CorCurgus} one can immediately add a potential $q\in L^1[a,b]$.
\end{remark}
\begin{example}
Let $r\in L^1[0,1]$ be positive.
Put $q := -r$ and consider the equation
\[
-\Big(\frac{1}{r}f'\Big)'-r f=0\quad \text{on} \quad [0,1].
\]
Then, a fundamental system of solutions of this equation is
\[
f_1(x)=\sin(I^+(x)),\quad f_2(x)=\cos(I^+(x)),\quad \text{where}\quad I^+(x)=\int_0^x r \, dt.
\]
Moreover, one easily gets
\[
(\tfrac{1}{r}f_j')(1)\overline{f_j(1)} - (\tfrac{1}{r}f_j')(0)\overline{f_j(0)}=
\frac{(-1)^{j+1}}{2}\sin(2I^+(1)), \quad j\in\{1,2\}.
\]
Now, as in Example \ref{ex:periodic_2}(ii), choose
\[
r(x)=\frac{\pi}{(1-x)\log^2(\frac{1-x}{\e})},\quad x\in(0,1)
.\]
 Then we compute $R(1)=\int_0^1 r \, dt =\pi$ and hence
 we arrive at an example such that condition (ii) of Theorem \ref{ThmBennewitz} is
satisfied, the function $I_0^+$ is positively increasing, however the function $I_1^-$ is not positively increasing.
This implies that inequality \eqref{Everitt} (with $q=-r$)
fails to hold.
On the other hand, in addition to condition (ii) of Theorem~\ref{ThmBennewitz},
inequality \eqref{Everitt} with $q = 0$ and
restricted to the class of functions
such that $(f'/r)(1) = 0$ is also valid
by \cite[Theorem 3.13]{BF2}.
\end{example}

\end{document}